\documentclass[pdflatex,sn-mathphys-num]{sn-jnl}

\usepackage{graphicx}%
\usepackage{multirow}%
\usepackage{amsmath,amssymb,amsfonts}%
\usepackage{amsthm}%
\usepackage{mathrsfs}%
\usepackage[title]{appendix}%
\usepackage[dvipsnames]{xcolor}%
\usepackage{textcomp}%
\usepackage{manyfoot}%
\usepackage{booktabs}%
\usepackage{algorithm}%
\usepackage{algorithmicx}%
\usepackage{algpseudocode}%
\usepackage{listings}%

\usepackage{comment}
\usepackage{enumitem}
\usepackage{overpic}

\usepackage{cleveref}

\newcommand{\F}{\mathfrak{F}}

\newcommand{\C}{\mathbb{C}}

\newcommand{\vv}{\mathbf{v}}

\newcommand{\ww}{\mathbf{w}}
\newcommand{\zz}{\mathbf{z}}
\newcommand{\xx}{\mathbf{x}}
\newcommand{\yy}{\mathbf{y}}

\newcommand{\otimesfrac}[2]{%
\frac{#1}{#2}\kern-0.2em\raisebox{-0.4ex}{\footnotesize$\otimes$}%
}

\theoremstyle{thmstyleone}%
\newtheorem{theorem}{Theorem}[section]
\theoremstyle{thmstyletwo}%

\theoremstyle{thmstylethree}%

\usepackage[]{lineno}

\begin{document}

\title[A Contour Method for Multiparameter Eigenvalue Problems]{A Contour Method for Multiparameter Eigenvalue Problems}

\author*[1]{\fnm{Emil} \sur{Graf}}\email{jeg348@cornell.edu}

\author[1]{\fnm{Alex} \sur{Townsend}}\email{townsend@cornell.edu}

\affil*[1]{\orgdiv{Department of Mathematics}, \orgname{Cornell University}, \orgaddress{\street{212 Garden Ave.}, \city{Ithaca}, \postcode{14853}, \state{NY}, \country{USA}}}

\abstract{Multiparameter eigenvalue problems arise in boundary value problems, stability analysis, and delay-differential equations. Despite their importance, existing methods either require solving extremely large global problems or rely on local iterative techniques that only recover a few eigenvalues at a time. In this work we develop the first contour method for analytic multiparameter eigenvalue problems. The key theoretical ingredient is a new residue formula for multivariate matrix-valued analytic functions, extending Beyn's Keldysh-based residue theorem for meromorphic operator functions to several complex variables.
Using this, we derive a multidimensional analogue of Beyn's contour method that computes all the eigenvalues of a multiparameter eigenvalue problem contained in a prescribed region of $\mathbb{C}^d$. The resulting algorithm targets eigenvalues in a region without constructing preposterously large matrices and is embarrassingly parallelizable. Numerical experiments demonstrate that we can now successfully solve large multiparameter problems arising in applications where existing approaches struggle.
}

\keywords{Contour method, multiparameter eigenvalue problem, Keldysh, Beyn}


\pacs[MSC Classification]{65F15, 15A18, 15A69, 47A56}

\maketitle

\section{Introduction}
\label{sec:intro}

Multiparameter eigenvalue problems arise naturally when several spectral parameters interact simultaneously. They appear in the separation of variables for partial differential equations~\cite{willatzen2011bvp,gheorghiu2012mathieu,plestenjak2015mathieu,faierman1991ode}, in stability analysis of dynamical systems~\cite{pons2017aef,pons2018aef}, in delay-differential equations~\cite{jarlebring2009dde}, in the computation of the signed distance between ellipsoids~\cite{iwata2015signeddist}, in the computation of zero-group-velocity points in waveguides~\cite{kiefer2023waves}, when computing properties of leaky waves~\cite{gravenkamp2025leakywaves}, and in the optimal parameter selection for ARMA and LTI models~\cite{demoor2019lti,vermeersch2019arma,demoor2020lti,agudelo2021lti}. Classical examples include multiparameter Sturm--Liouville problems and coupled boundary value problems, which have been studied extensively since the pioneering work of Atkinson~\cite{atkinson1972multieig}.

Despite this long history and the broad range of applications, the numerical solution of multiparameter eigenvalue problems remains challenging. In contrast to the well-developed theory and algorithms for standard (single-parameter) eigenvalue problems, existing approaches typically either construct extremely large linearizations or rely on local iterative methods that recover eigenvalues one at a time. As a result, there is currently no analogue of the powerful contour-based eigensolvers that exist for the computation of eigenvalues for one-parameter problems.

Let $U \subset \C^d$ be an open set and let $\Omega(U,\C^{n_i \times n_i})$ denote the space of analytic functions from $U \to \C^{n_i \times n_i}$. An analytic multiparameter eigenvalue problem (MEP) takes the form
\begin{equation} \label{eq:mepform}
P_i(z_1,\ldots,z_d) \mathbf{v}_i = 0, \quad 1 \leq i \leq d,
\end{equation}
where $P_i(z_1,\ldots,z_d) \in \Omega(U,\C^{n_i \times n_i})$ for integers $n_1,\ldots,n_d$.   The goal is to find points $\mathbf z^*=(z_1^*,\ldots,z_d^*)\in\C^d$ for which there exist nonzero vectors $\mathbf v_i\in\C^{n_i}$ satisfying~\cref{eq:mepform}.  Such a point $\mathbf z^*$ is called an eigenvalue of the MEP,  and the collection $\{\mathbf v_i\}_{i=1}^d$ is called a (right) eigenvector.  Similarly, associated left eigenvectors $\{\mathbf w_i\}_{i=1}^d$ satisfy $\mathbf w_i^H P_i(\mathbf z^*)=0$ for $1\le i\le d$.  As in the scalar case, multiplicities can be defined through the determinant system
$
p(\mathbf z)=\{\textrm{det}(P_i(\mathbf z))\}_{i=1}^d .
$
The algebraic multiplicity of an eigenvalue $\mathbf z^*$ is the multiplicity of $\mathbf z^*$ as a root of this system. Eigenvectors $\{\mathbf v_i^{(1)}\},\ldots,\{\mathbf v_i^{(k)}\}$ are said to be independent if the Kronecker products
$\mathbf v_1^{(j)}\otimes\cdots\otimes\mathbf v_d^{(j)}$ are linearly independent.
The number of independent eigenvectors is the geometric multiplicity.  An eigenvalue is semisimple if these multiplicities coincide, and simple if the algebraic multiplicity is one. Simple eigenvalues are generic, and therefore it is sufficient for a practical method to study only simple eigenvalues.

If the functions $P_i$ are polynomial in $z_1,\ldots,z_d$, we call \eqref{eq:mepform} a polynomial multiparameter eigenvalue problem (PMEP). If, in addition, each $P_i$ is linear in the variables $z_1,\ldots,z_d$, we obtain a linear multiparameter eigenvalue problem (LMEP). Linear multiparameter eigenvalue problems predominate in the literature and are often referred to simply as multiparameter eigenvalue problems.

Solution methods for multiparameter eigenvalue problems are severely limited by the potentially large number of solutions and the correspondingly large matrices that must be constructed. For linear problems, the method of operator determinants~\cite{atkinson1972multieig}, which is the only global algebraic method of which we are aware, requires solving an eigenvalue problem of size $\prod_{i=1}^d n_i$. Homotopy methods also aim to compute all solutions, but encounter similar difficulties with large-scale problems and may be substantially less stable than linear algebra approaches~\cite{plestenjak2000homotopy,plestenjak2001homotopy,dong2016homotopy,rodriguez2021homotopy}.

Because many multiparameter eigenvalue problems are too large to solve globally, a variety of local and subspace methods have been developed, e.g.,~\cite{hochstenbach2005jacobi,hochstenbach2002jacobi,meerbergen2015sylvester,hochstenbach2019subspace}.  These methods can compute a prescribed number of eigenvalues near a given target, or identify extremal eigenvalues such as the smallest or largest ones. Recently, a Newton-type method was proposed that computes eigenvalues indexed by a multiindex~\cite{eisenmann2025newton}.  While local methods can be very efficient, they provide limited control over which eigenvalues are returned. In many applications it is desirable to compute all eigenvalues within a specified region (see \cref{sec:num}), since multiparameter systems typically possess many solutions that are physically irrelevant. To the best of our knowledge, no existing method can certifiably compute all eigenvalues within a region without first computing all solutions in $\C^d$, which is prohibitively expensive.

For polynomial multiparameter eigenvalue problems the situation is even more complicated. Most methods reduce the problem to a linear multiparameter eigenvalue problem and then apply one of the approaches described above; for example, quadratic two-parameter eigenvalue problems can be treated in this way~\cite{bor2010quadeig,hochstenbach2012quadeig}. Recently,~\cite{graf2025pmep} introduced a method capable of globally linearizing and solving any PMEP, but at the cost of solving even larger generalized eigenvalue problems.  For nonpolynomial MEPs the situation is worse: we are not aware of any nonlocal method in the literature (a local method is given in~\cite{plestenjak2016mep}). Consequently, such problems currently cannot be solved with guarantees of computing global or regional solution sets.

A similar tradeoff between large and expensive global methods and faster but target-limited local methods exists for univariate eigenvalue problems. In this setting, an effective middle ground has emerged in the form of contour methods, such as FEAST~\cite{polizzi2009feast} for linear pencils and Beyn's method~\cite{asakura2009beyn,beyn2012beyn,yokota2013beyn} for nonlinear eigenvalue problems. These methods compute all eigenvalues inside a region of the complex plane by evaluating contour integrals along its boundary, and therefore avoid wasting computational resources finding solutions that are not required. We illustrate this in \cref{fig:toyexample}.  An additional advantage is that most of the computational work is embarrassingly parallel. Contour methods have since been extended and refined in many directions, including applications to infinite-dimensional differential eigenvalue problems~\cite{austin2013diffop,horning2020feast,colbrook2025infbeyn} and to eigenvector nonlinearities~\cite{claes2023contour}. See also the surveys~\cite{guttel2017nlevp,imakura2016survey}.

\begin{figure}
\begin{center}
\centering
    \begin{minipage}{0.48\textwidth}
        \begin{overpic}[width=\textwidth, tics=10]{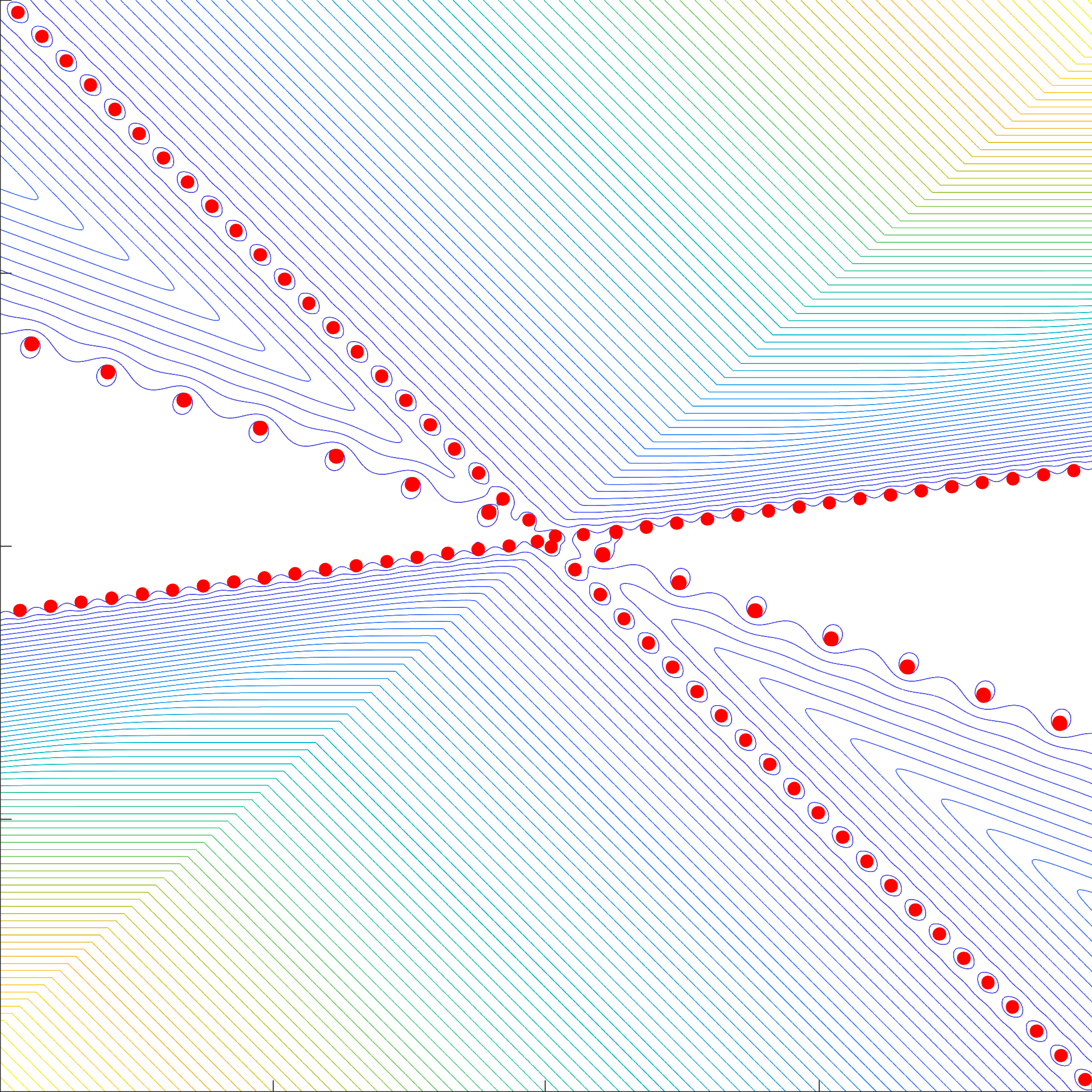}
            \put(44,-9){$\text{Re}(z)$}
            \put(-11,43){\rotatebox{90}{$\text{Im}(z)$}}
             {\footnotesize
            \put(-9,0){$-50$}
            \put(-9,23.5){$-25$}
            \put(-3,49){$0$}
            \put(-5,73){$25$}
            \put(-5,98){$50$}
            \put(-2,-4){$-50$}
            \put(21,-4){$-25$}
            \put(49,-4){$0$}
            \put(73,-4){$25$}
            \put(97,-4){$50$}
            }
        \end{overpic}
    \end{minipage}
    \hspace{.3cm}
\begin{minipage}{0.48\textwidth}
        \begin{overpic}[width=\textwidth, tics=10]{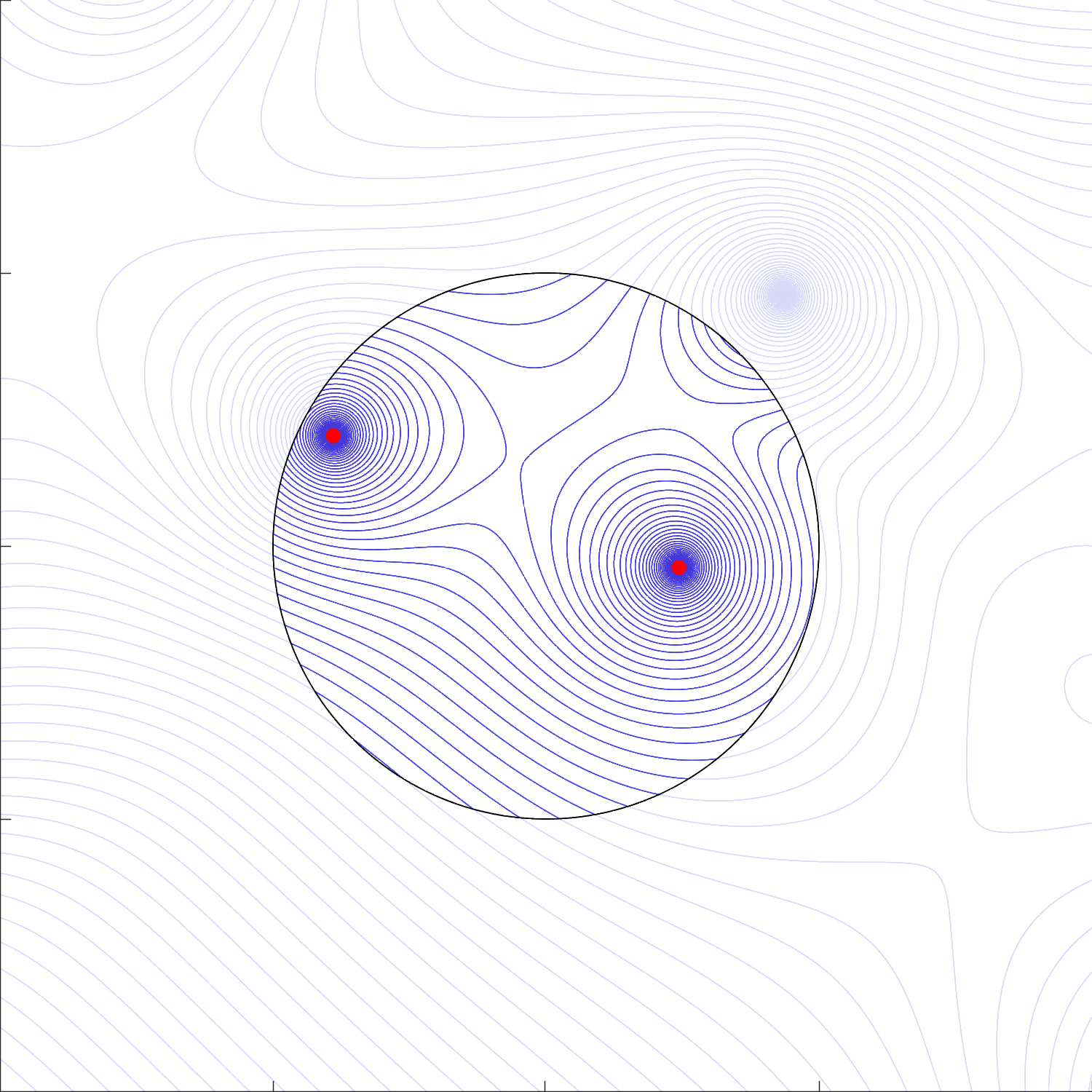}
           \put(44,-9){$\text{Re}(z)$}
             {\footnotesize
            \put(-6.5,0){$-2$}
            \put(-6.5,23.5){$-1$}
            \put(-3,49){$0$}
            \put(-3.5,74){$1$}
            \put(-3.5,98){$2$}
            \put(-2,-4){$-2$}
            \put(21,-4){$-1$}
            \put(49,-4){$0$}
            \put(74,-4){$1$}
            \put(98,-4){$2$}
            }
        \end{overpic}
    \end{minipage}
\end{center}
\vspace{.5cm}
\caption{Left: Contour plot of the smallest singular value of $M(z) = \begin{bmatrix} \sin(e^{\pi i/4}*(z-1/2)) & \sinh(z-1/4) \\ 1/2 & \cos(3z/2-i) \end{bmatrix}$. This analytic matrix function has infinitely many eigenvalues (red dots). Right: Beyn's method applied to $M(z)$ inside the unit circle. The contour solver only ``sees'' the behavior inside the unit circle, so it easily finds the two solutions.}
\label{fig:toyexample}
\end{figure}

The theoretical foundation of the most general contour methods lies in the work of M.V. Keldysh~\cite{keldysh1951char,keldysh1971complete}, who derived an expansion of a meromorphic operator-valued function of a single variable around a pole (\cref{thm:Keldysh}). This result generalizes the classical theory of residues in complex analysis, and it reduces the justification of contour approaches such as Beyn's method to an application of Cauchy's residue theorem.

For multiparameter eigenvalue problems, however, no contour method currently exists. This leaves a significant gap both in the theory of residues and in the practice of solving eigenvalue problems. The challenge of developing such a method is to produce a multivariate analogue of the Keldysh-based residue formula, i.e., to develop a multidimensional residue formula for matrix functions. The difficulty arises from two sources. First, multidimensional complex analysis lacks a direct analogue of the simple residue theorem of one complex variable. Second, the matrices appearing in multiparameter eigenvalue problems introduce additional complications due to noncommutativity.

One possible approach is to try to obtain a Keldysh-style expansion of a multivariate matrix function in the neighborhood of a singularity. However, singularities of multivariate functions lie along higher dimensional hypersurfaces instead of at points, so such a generalization has more in common with the Weierstrass preparation theorem than with a contour method. Some interesting work has been done in this area~\cite{dencker1993preparation,balicki2026parametric}, but we do not believe that it leads to a contour solver for MEPs.

In multidimensional complex analysis, there are many residue formulas available that generalize Cauchy's residue formula; the challenge then becomes how to navigate them to select results that generalize well to the matrix setting. In~\cite{kravanja1998contour,kravanja2000contour}, the authors develop a contour solver for systems of multivariate scalar analytic functions based on the Bochner--Martinelli integral formula (see~\cite{kytmanov1995bochner,kytmanov2015residue}) and the associated Roos--Yuzhakov logarithmic residue~\cite{roos1974residue,yuzhakov1973residue,alzenberg1983residue}, which might seem to be a reasonable path to generalize. However, careful inspection of the Bochner--Martinelli formula and its proof suggests that formulating such a result is difficult for matrix-valued functions. For this reason, we do not believe that this approach generalizes well to MEPs. Instead, our approach is based on the closely related Grothendieck residue~\cite{griffiths1978ag,soares2002residues,berenstein2012residue} for scalar analytic systems, which we have managed to generalize completely to matrix functions. 

The main result of this paper is a multidimensional residue formula for matrix functions that extends Beyn's Keldysh-based residue formula and the scalar Grothendieck residue. The formula expresses a contour integral constructed from the matrix functions $P_i(z)$ over a $d$-dimensional contour as a sum over eigenvalues and associated eigenvectors (see \cref{thm:simplematrixgrothres,thm:globalrescoho,thm:omegarep,thm:globalres}). This identity leads directly to a contour-based numerical method that computes all eigenvalues inside a prescribed region of $\mathbb{C}^d$ without constructing the exponentially large operator determinants that arise in classical approaches. This solver fills the gap between global and local methods (\cref{sec:alg}). We demonstrate the effectiveness of the method on applications including ARMA models, multiparameter Sturm--Liouville problems, and delay-differential equations (\cref{sec:num}).

\section{Background}
\label{sec:back}
We begin with a brief overview of the analytic tools that underlie our method. In \cref{subsec:keldysh} we review Keldysh's theorem and Beyn's univariate contour method, which together provide the theoretical foundation for contour-based eigensolvers in the single-parameter setting. Our approach may be viewed as a multivariate extension of these ideas.  We also recall the Grothendieck residue for scalar analytic systems, following the treatment in~\cite{griffiths1978ag}, which provides the starting point for our construction.  


\subsection{Keldysh's Theorem and Beyn's Contour Method}
\label{subsec:keldysh}

Univariate contour methods are based on the resolvent of an operator.  For a matrix $A\in\mathbb{C}^{n\times n}$, the resolvent is the meromorphic function $z\mapsto (A-z I)^{-1}$. More generally,  for an analytic matrix function  $A(z)$,  the resolvent is $A(z)^{-1}$.  In both cases the resolvent has poles precisely at the eigenvalues of $A$,  which allows these eigenvalues to be extracted using contour integrals.  A fundamental theoretical result is the theorem of M.\,V.~Keldysh~\cite{keldysh1951char,keldysh1971complete}, which provides a local expansion of the resolvent in a neighborhood of a pole.  A comprehensive modern reference is~\cite{mennicken2003keldysh}. 
We give a brief outline following the presentations in~\cite{beyn2012beyn,guttel2017nlevp}.

Let $U \subset \C$ be an open set. Denote by $\Omega(U,\C^{n \times n})$ the space of holomorphic functions from $U \to \C^{n \times n}$.  
A holomorphic matrix function $A(z)$ has only isolated eigenvalues; that is, for each eigenvalue $\lambda$ there exists a neighborhood $U$ such that $\lambda$ is the only eigenvalue in $U$~\cite[Thm.~1.3.1]{mennicken2003keldysh}. Consequently, the resolvent $A(z)^{-1}$ is meromorphic in $U$ and admits a convergent Laurent expansion about $\lambda$. Keldysh's theorem describes the structure of the singular part of this expansion (see \cref{thm:Keldysh}). When the eigenvalue is simple, the singular term can be written explicitly in terms of the associated left and right eigenvectors, which is sufficient for most practical purposes. A fully general statement appears in~\cite[Thm.~1.6.5]{mennicken2003keldysh}, from which our formulation follows as a specialization.

\begin{theorem}[Keldysh] \label{thm:Keldysh}
	Let $A \in \Omega(U,\C^{n \times n})$ be a holomorphic matrix function and let $C \subset U$ be compact containing finitely many eigenvalues $\lambda_1,\ldots,\lambda_m$. Suppose that all eigenvalues are simple, with right eigenvectors
	$
	\vv_k
	$
	and left eigenvectors
	$
	\ww_k
	$
	for $k = 1,\ldots, m$,
	scaled such that
	\begin{equation} \label{eq:keldyshorth}
		\ww_k^H A'(\lambda_k)\vv_k = 1, \quad k = 1, \ldots, m.
	 \end{equation}	
	Then there exists a neighborhood $C \subset U' \subset U$, and a function $R \in \Omega(U',\C^{n \times n})$ such that, for $z \in U' \setminus \{\lambda_1,\ldots,\lambda_m\}$,
	\begin{equation} \label{eq:simpleglobalkeldysh}
		A(z)^{-1} = \sum_{k=1}^{m}  (z-\lambda_k)^{-1} \vv_{k} \ww_{k}^H + R(z).
\end{equation}
\end{theorem}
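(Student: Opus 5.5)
The plan is to prove the expansion locally at each eigenvalue and then patch the local pieces together. First we need that $A(z)$ is invertible for $z$ near $C$ away from the $\lambda_k$. Since $C$ contains only the finitely many eigenvalues $\lambda_1,\ldots,\lambda_m$, and eigenvalues of holomorphic matrix functions are isolated, $\det A$ has no zeros on $C\setminus\{\lambda_k\}$. By compactness we can then choose an open neighborhood $U'\supset C$ on which $\det A$ vanishes only at the $\lambda_k$. Hence $A(z)^{-1}=\operatorname{adj}A(z)/\det A(z)$ is meromorphic on $U'$, with poles only at the $\lambda_k$. Once we show that each pole is simple, with residue $\vv_k\ww_k^H$, the function
\[
R(z)=A(z)^{-1}-\sum_{k}(z-\lambda_k)^{-1}\vv_k\ww_k^H
\]
has removable singularities at every $\lambda_k$ and extends holomorphically to $U'$. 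So everything reduces to a local statement at a single simple eigenvalue $\lambda=\lambda_k$.

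For the local statement, use that $\lambda$ is simple, i.e.\ a simple zero of $\det A$. Then $\ker A(\lambda)$ is one-dimensional, spanned by $\vv$. If the kernel were larger, or $A(\lambda)$ had a nontrivial Jordan chain, $\det A$ would vanish to higher order. To see this concretely, take a Smith-type local factorization
\[
A(z)=E(z)\,\operatorname{diag}\bigl((z-\lambda)^{\kappa_1},\ldots,(z-\lambda)^{\kappa_n}\bigr)\,F(z),
\]
with $E,F$ holomorphic and invertible near $\lambda$ (the local Smith form over the ring of germs, a PID). Then $\sum_j\kappa_j=1$, so exactly one $\kappa_j$ equals $1$ and the rest are $0$. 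It follows that
\[
A(z)^{-1}=F(z)^{-1}\,\operatorname{diag}\bigl(1,\ldots,(z-\lambda)^{-1},\ldots,1\bigr)\,E(z)^{-1},
\]
which has a simple pole with rank-one residue $M=F(\lambda)^{-1}e_je_j^TE(\lambda)^{-1}$.

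Next identify the residue. Write $A(z)^{-1}=(z-\lambda)^{-1}M+H(z)$ with $H$ holomorphic.
\begin{itemize}
\item Multiplying $A(z)A(z)^{-1}=I$ and collecting the $(z-\lambda)^{-1}$ coefficient gives $A(\lambda)M=0$. Likewise $MA(\lambda)=0$. Hence $M=c\,\vv\ww^H$ for some scalar $c$, since $M$ is rank one with range in $\ker A(\lambda)$ and row space annihilated by $A(\lambda)$ from the right.
\item The constant term of the same expansion gives $A'(\lambda)M+A(\lambda)H(\lambda)=I$. Multiply on the left by $\ww^H$ and on the right by $\vv$. Using $\ww^HA(\lambda)=0$, this yields $c\,(\ww^HA'(\lambda)\vv)(\ww^H\vv)=\ww^H\vv$.
\end{itemize}
This needs a nondegeneracy fact. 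Instead, apply $\ww^H(\cdot)$ directly to the matrix identity $A'(\lambda)M+A(\lambda)H(\lambda)=I$: the result is $c\,(\ww^HA'(\lambda)\vv)\,\ww^H=\ww^H$. Since $\ww\neq0$, this gives $c\,\ww^HA'(\lambda)\vv=1$. Two conclusions follow at once. First, $\ww^HA'(\lambda)\vv\neq0$, so the normalization \cref{eq:keldyshorth} is achievable. Second, under that normalization $c=1$, so $M=\vv\ww^H$.

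The main obstacle is the structural step: showing that simplicity (algebraic multiplicity one) forces a simple pole with rank-one residue. If one prefers not to quote the local Smith form, the alternative is to argue directly with $\operatorname{adj}A(z)/\det A(z)$. Since $\det A$ has a simple zero, the pole order is at most one. The residue $\operatorname{adj}A(\lambda)/(\det A)'(\lambda)$ is nonzero and satisfies $A(\lambda)\operatorname{adj}A(\lambda)=0$, and it is rank one because $A(\lambda)$ has rank exactly $n-1$. The rank statement follows from Jacobi's formula, $(\det A)'(\lambda)=\operatorname{tr}\bigl(\operatorname{adj}A(\lambda)A'(\lambda)\bigr)\neq0$, which forces $\operatorname{adj}A(\lambda)\neq0$. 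This direct route avoids the factorization machinery and is what I would try first.
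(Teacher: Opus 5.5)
Your proof is correct, but it takes a different route from the paper. The paper does not prove the statement itself: it quotes the general Keldysh theorem of Mennicken and M\"oller (Thm.~1.6.5) and reads off the displayed formula as the specialization to simple eigenvalues. That general result describes the full principal part of $A(z)^{-1}$ at an eigenvalue of arbitrary multiplicity, using canonical systems of Jordan chains of $A$ and $A^H$ subject to a biorthogonality normalization. For a simple eigenvalue the chains have length one, and the normalization collapses to $\ww^H A'(\lambda)\vv=1$.

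You instead prove the simple case from scratch. A simple zero of $\det A$ forces a simple pole with rank-one residue $M$; you get this either from $A^{-1}=\operatorname{adj}A/\det A$ together with Jacobi's formula, or from the local Smith form. The Laurent coefficients of $AA^{-1}=A^{-1}A=I$ then give $A(\lambda)M=MA(\lambda)=0$, hence $M=c\,\vv\ww^H$. Applying $\ww^H$ on the left only to $A'(\lambda)M+A(\lambda)H(\lambda)=I$ yields $c\,\ww^HA'(\lambda)\vv=1$. The passage to a neighborhood $U'$ of $C$ by compactness and removable singularities is also fine.

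Your route gives a short, self-contained argument. As a byproduct it shows $\ww^HA'(\lambda)\vv\neq 0$, so the normalization assumed in the hypothesis is always achievable, which the statement takes for granted. The citation, by contrast, buys the non-simple case (higher-order poles, Jordan chains), which the paper never actually uses.

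In a write-up, delete the first attempt that multiplies by $\ww^H$ on the left and $\vv$ on the right. As you noticed, it produces a factor $\ww^H\vv$ that can genuinely vanish: take $A(z)=\bigl[\begin{smallmatrix}0&1\\ z&0\end{smallmatrix}\bigr]$ at $\lambda=0$, where $\vv=e_1$ and $\ww=e_2$. Only the one-sided version is needed.
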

Keldysh's theorem shows that the singularities of the resolvent of $A$ in $U$ are completely determined by the eigenstructure of $A$ on $C$. In particular, this implies that the eigenstructure can be recovered by evaluating contour integrals around the boundary of a region containing $C$. The following formula, which follows directly from Cauchy's residue theorem together with \cref{thm:Keldysh}, forms the basis of Beyn's contour solver~\cite[Thm.~2.9]{beyn2012beyn}:
\begin{equation} \label{eq:simplebeyn}
\frac{1}{2\pi i}\int_\Gamma f(z)A(z)^{-1}\,dz
=
\sum_{k=1}^{m} f(\lambda_k)
\vv_{k}\, \ww_{k}^H,
\end{equation}
where $f\in\Omega(U,\mathbb{C})$ and $\Gamma$ is a smooth contour in $U$ enclosing the $m$ eigenvalues $\lambda_1,\ldots,\lambda_m$ of $A$.  Beyn's method computes the eigenvalues inside $\Gamma$ by numerically approximating \cref{eq:simplebeyn} for the choices $f(z)=1$ and $f(z)=z$.  These integrals generate a reduced eigenvalue problem whose size equals the number of eigenvalues inside $\Gamma$, which can yield a substantial computational advantage over global methods that attempt to compute the entire spectrum.

\subsection{The Scalar Grothendieck Residue}
\label{subsec:groth}

Extending contour methods to several complex variables requires a multivariate notion of residue.  In this work, we use the Grothendieck residue for analytic systems~\cite{griffiths1978ag,soares2002residues}, which is closely related to the Roos--Yuzhakov logarithmic residue~\cite{roos1974residue,yuzhakov1973residue,alzenberg1983residue} arising from the Bochner--Martinelli integral formula (see~\cite{kytmanov1995bochner}). 

Unlike the univariate case, zeros of a scalar-valued analytic function in several variables typically occur along hypersurfaces rather than isolated points.  Instead one considers a system of $d$ analytic functions $p_i:\mathbb{C}^d \to \mathbb{C}$,  $i=1,\ldots,d. $ Such systems generically have isolated common roots, making it natural to associate a residue with the system.  Let $\mathbf z^*$ be an isolated common root of $p=\{p_1,\ldots,p_d\}$ and  $U$ be a neighborhood of $\mathbf z^*$ in which $\mathbf z^*$ is the only common root.  Define
\[
\Gamma_\epsilon=\{\mathbf z:\ |p_i(\mathbf z)|=\epsilon,\ 1\le i\le d\},
\]
with orientation determined by $d(\arg(p_1))\wedge\cdots\wedge d(\arg(p_d))\ge 0,$
and assume $\epsilon$ is sufficiently small that $\Gamma_\epsilon\subset U$. For $g\in\Omega(U,\mathbb C)$, the Grothendieck residue of $p$ with respect to $g$ is
\begin{equation}\label{eq:scalarGroth}
\operatorname{Res}_p(g,\mathbf z^*)
=
\left(\frac{1}{2\pi i}\right)^d
\int_{\Gamma_\epsilon}
\frac{g(\mathbf z)\,dz_1\wedge\cdots\wedge dz_d}
{p_1(\mathbf z)\cdots p_d(\mathbf z)} .
\end{equation}
In \cref{sec:theory} we introduce an analogous construction for matrix-valued functions. In both settings it is necessary to relate the residue to the roots of $p$.   
If the root $\mathbf z^*$ is simple, then $\textrm{det}(Jp(\mathbf z^*))\neq 0$, where $Jp$ denotes the Jacobian of the system, and~\cite[Thm.~3.2.2]{soares2002residues} shows that
\begin{equation}\label{eq:simplescalarGrothres}
\operatorname{Res}_p(g,\mathbf z^*)
=
\frac{g(\mathbf z^*)}{\textrm{det}(Jp(\mathbf z^*))}.
\end{equation}
Simple roots are generic, and therefore sufficient for practical purposes. This formula suggests that residues encode the roots of $p$. However,  the definition in~\eqref{eq:scalarGroth} is local and therefore difficult to evaluate without already knowing the root.  Fortunately,  Griffiths and Harris proved that~\cite[p.~654]{griffiths1978ag}
\begin{equation} \label{eq:scalarGrothcoho}
	\operatorname{Res}_p(g,\zz^*) =\! \frac{d!}{(2 \pi i)^d} \!\!\int_{\partial B_{\epsilon}(\zz^*) }\!\!\!\!\!\! \!\!\!\!\frac{g(\zz) \sum_{i=1}^d (-1)^{i-1} \overline{p_i}  d \overline{ p_1} \wedge \!\cdots\! \wedge \widehat{d \overline{p_i}} \wedge \!\cdots  \!\wedge d \overline{p_d} \wedge d \zz}{\|p\|^{2d}},
\end{equation}
where the notation $\widehat{d \overline{p_i}}$ indicates omitting the $i$-th differential, $d \zz = d z_1 \wedge \!\cdots\! \wedge d z_d$, and the integral is over the boundary of an $\epsilon$-ball around $\zz^*$. The result is proved by viewing the residue in a different cohomology theory, by means of the Dolbeault isomorphism. We will prove a similar result for analytic matrix functions, using generalizations of the same tools (see~\cref{thm:globalres}).

Once we have the form of the residue in~\cref{eq:scalarGrothcoho}, it is straightforward to globalize it. If $\Gamma$ is a contour enclosing roots $\zz_1,\ldots,\zz_{m}$, then we obtain~\cite[p. 656]{griffiths1978ag}
\begin{equation} \label{eq:globalscalarGroth}
	\sum_{i=1}^{m} \operatorname{Res}_p(g,\zz_i) = \frac{d!}{(2 \pi i)^d} \int_{\Gamma} \frac{g(\zz) \sum_{i=1}^d (-1)^{i-1} \overline{p_i}  d \overline{ p_1} \wedge \!\cdots\! \wedge \widehat{d \overline{p_i}} \wedge \!\cdots  \!\wedge d \overline{p_d} \wedge d \zz}{\|p\|^{2d}}.
\end{equation}

From~\cref{eq:globalscalarGroth} it is natural to consider a contour method for systems of scalar analytic functions that is analogous to Beyn's method. The methods developed in~\cite{kravanja1998contour,kravanja2000contour} are based on an equivalent residue formula involving the Bochner--Martinelli kernel, which provides a multivariate generalization of Cauchy's integral formula~\cite[Thm.~1.4]{kytmanov1995bochner}. Although our derivation follows the framework of the Grothendieck residue, the resulting algorithm is closely related to, and may be viewed as a generalization of, the contour methods proposed in~\cite{kravanja1998contour,kravanja2000contour}.

\subsection{Dolbeault's Theorem}
\label{subsec:prelim}


The multidimensional residue formula that underlies our contour method is most naturally expressed using the language of differential forms and cohomology.  In the scalar case, Griffiths and Harris~\cite{griffiths1978ag} developed the theory by viewing the Grothendieck residue in two different cohomology theories, and extracting explicit differential forms to represent the cohomology classes.  The derivation of our contour formula in \Cref{sec:theory} requires us to transform an integral of a holomorphic form over a small polydisc into an integral of a $(d,d-1)$-form over the boundary of a region in $\C^d$.  Carrying out this transformation requires Dolbeault's theorem with coefficients in a vector bundle, which provides a connection between holomorphic forms and $(p,q)$-forms. 

Let $X$ be a complex manifold of complex dimension $d$ and let
$E\to X$ be a holomorphic vector bundle.
We denote by $A^{p,q}(X,E)$ the sheaf of smooth $(p,q)$-forms on $X$
with values in $E$. In local holomorphic coordinates
$z_1,\ldots,z_d$, such a form can be written as
\[
\sum_{I,J}
\phi_{I,J}(z)
\,dz_{i_1}\wedge\cdots\wedge dz_{i_p}
\wedge
d\overline z_{j_1}\wedge\cdots\wedge d\overline z_{j_q},
\]
where the coefficients $\phi_{I,J}(z)$ are smooth sections of $E$.  The Dolbeault operator
\[
\overline{\partial}:A^{p,q}(X,E)\rightarrow A^{p,q+1}(X,E)
\]
acts by differentiating the coefficients with respect to the
$\overline z$ variables~\cite[p.~87]{kodaira2005manifolds}.  We denote by
\[
Z^{p,q}(X,E)=\ker(\overline{\partial}:A^{p,q}(X,E)\to A^{p,q+1}(X,E))
\]
the subsheaf of $\overline{\partial}$-closed $(p,q)$-forms.  Holomorphic $p$-forms with values in $E$ form a sheaf, denoted $\Omega^p(E)$.  The associated sheaf cohomology groups $H^q(X,\Omega^p(E))$, as defined in~\cite[p.~115-123]{kodaira2005manifolds}, arise naturally in the theory of multivariate residues. 

Dolbeault's theorem is an identification between sheaf cohomology groups and Dolbeault cohomology groups; with coefficients in a vector bundle of differential forms, it allows us to replace $d$-forms with $(d,d-1)$-forms. While the result below is classical (see~\cite[Thm.~3.20]{wells2008manifolds}),  we give a proof that closely follows the scalar argument
presented in~\cite[p.~45]{griffiths1978ag} because the explicit form of the isomorphism is used later when we
construct representatives of the relevant cohomology classes.

\begin{theorem}[Dolbeault]\label{thm:dolbeault}
Let $X$ be a complex manifold and let $E$ be a holomorphic vector bundle on
$X$. Then
\begin{equation}
H^q(X,\Omega^p(E)) \cong H^{p,q}(X,E).
\end{equation}
where
$
H^{p,q}(X,E)
=
\frac{Z^{p,q}(X,E)}
{\overline{\partial}A^{p,q-1}(X,E)}
$
is the Dolbeault cohomology group.
\end{theorem}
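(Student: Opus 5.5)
The plan is the standard argument: build a fine resolution of $\Omega^p(E)$ by $\overline{\partial}$-acyclic sheaves and read off the cohomology. Concretely, I would show that the sequence of sheaves
\begin{equation*}
0 \longrightarrow \Omega^p(E) \longrightarrow A^{p,0}(X,E) \xrightarrow{\ \overline{\partial}\ } A^{p,1}(X,E) \xrightarrow{\ \overline{\partial}\ } \cdots \xrightarrow{\ \overline{\partial}\ } A^{p,d}(X,E) \longrightarrow 0
\end{equation*}
is exact. Exactness at $A^{p,0}(X,E)$ is the statement that an $E$-valued $(p,0)$-form with $\overline{\partial}\phi=0$ has holomorphic coefficients in any local holomorphic frame. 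This holds because, in a holomorphic trivialization $E|_V\cong V\times\C^r$, the operator $\overline{\partial}$ acts componentwise and the transition functions are holomorphic, so $\overline{\partial}$ is well defined on $E$-valued forms.

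Exactness at the higher terms is local. Over a polydisc $V$ on which $E$ is trivial, an $E$-valued form is an $r$-tuple of scalar forms, so it suffices to invoke the scalar $\overline{\partial}$-Poincaré lemma (as in Griffiths--Harris, p.~25 and p.~45): a $\overline{\partial}$-closed $(p,q)$-form with $q\ge1$ on a polydisc is locally $\overline{\partial}$-exact on a slightly smaller polydisc. I expect this step to be the main obstacle. Proving it requires the one-variable Cauchy integral solution of $\partial g/\partial\overline z=f$, followed by an induction on the number of $d\overline z_j$ appearing, with care over shrinking polydiscs. Since the result is classical, I would cite it for scalar forms and only check that componentwise application in a holomorphic frame commutes with $\overline{\partial}$.

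Next, the sheaves $A^{p,q}(X,E)$ are fine, because they admit partitions of unity (smooth bump functions multiply smooth sections). Hence $H^k(X,A^{p,q}(X,E))=0$ for $k\ge1$.

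With exactness and fineness in hand, I would extract the isomorphism explicitly rather than quote the abstract theorem on acyclic resolutions, since the explicit map is needed later. Write $Z^{p,q}=Z^{p,q}(X,E)$ and split the resolution into short exact sequences
\begin{equation*}
0\to Z^{p,q}\to A^{p,q}(X,E)\xrightarrow{\overline{\partial}} Z^{p,q+1}\to 0 .
\end{equation*}
The associated long exact sequences, together with the vanishing of the cohomology of fine sheaves, give
\begin{equation*}
H^{k}(X,Z^{p,q+1})\cong H^{k+1}(X,Z^{p,q}) \quad (k\ge1),
\end{equation*}
and
\begin{equation*}
H^1(X,Z^{p,q})\cong \frac{H^0(X,Z^{p,q+1})}{\overline{\partial}H^0(X,A^{p,q}(X,E))}.
\end{equation*}
Chaining these, with $Z^{p,0}=\Omega^p(E)$, yields
\begin{equation*}
H^q(X,\Omega^p(E))\cong H^1(X,Z^{p,q-1})\cong H^{p,q}(X,E).
\end{equation*}

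Finally, I would record the map at the \v{C}ech level, since that is the form used later. Start from a \v{C}ech $q$-cocycle of holomorphic $E$-valued $p$-forms. Repeatedly write a cocycle as a \v{C}ech coboundary of smooth forms using a partition of unity $\{\rho_\alpha\}$, then apply $\overline{\partial}$. After $q$ steps this produces a global $\overline{\partial}$-closed $(p,q)$-form, and its Dolbeault class is the image of the original cocycle.
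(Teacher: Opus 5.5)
Your proposal is correct and follows essentially the same route as the paper: split into the short exact sequences $0\to Z^{p,s}(E)\to A^{p,s}(E)\xrightarrow{\overline{\partial}} Z^{p,s+1}(E)\to 0$, use fineness of $A^{p,s}(E)$ to make the connecting maps isomorphisms, and chain them down to $H^0(X,Z^{p,q})/\overline{\partial}H^0(X,A^{p,q-1})$. You also justify the sheaf-level surjectivity of $\overline{\partial}$ onto $Z^{p,s+1}(E)$ by applying the $\overline{\partial}$-Poincar\'e lemma componentwise in a holomorphic frame, a point the paper takes for granted when it asserts the short exact sequence.
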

\begin{proof}
Let $Z^{p,s}(E)$ denote the subsheaf of $\overline{\partial}$-closed
$(p,s)$-forms with values in $E$, that is, the kernel of 
$\overline{\partial}$.  For $s \geq 0$, there is a short exact sequence of
sheaves
\[
0
\longrightarrow
Z^{p,s}(E)
\longrightarrow
A^{p,s}(E)
\xrightarrow{\overline{\partial}}
Z^{p,s+1}(E)
\longrightarrow
0.
\]
Taking sheaf cohomology yields the associated long exact sequence of sheaf cohomology groups $H^p$ (see~\cite[Thm.~3.7]{kodaira2005manifolds})
\begin{align*}
0 \to H^0(X,Z^{p,s}) &\to H^0(X,A^{p,s}) \to H^0(X,Z^{p,s+1}) \to \\
H^1(X,Z^{p,s}) &\to H^1(X,A^{p,s}) \to H^1(X,Z^{p,s+1}) \to \\
&\vdots \\
H^r(X,Z^{p,s}) &\to H^r(X,A^{p,s}) \to H^r(X,Z^{p,s+1}) \to \cdots .
\end{align*}
By the argument of~\cite[p.~42]{griffiths1978ag}, as $A^{p,s}$ is a fine sheaf, $H^r(X,A^{p,s})=0$ for $r>0$,\footnote{$A^{p,s}$ is the vector bundle-valued version of $\mathscr{P}^{p,s}$ from~\cite[p.~36]{griffiths1978ag}. See also~\cite[p.~134-135]{kodaira2005manifolds}.} so the connecting homomorphism in the
long exact sequence induces isomorphisms
$
i_{r,s} : H^r(X,Z^{p,s})
\longrightarrow
H^{r-1}(X,Z^{p,s+1})
$
for $r > 1$  and $s \geq 0$. 
Iterating these maps produces the sequence of isomorphisms
\begin{align*}
H^q(X,\Omega^p(E)) &\cong H^{q-1}(X,Z^{p,1}) \cong H^{q-2}(X,Z^{p,2}) \cong \cdots \cong H^1(X,Z^{p,q-1}) \\&\cong \frac{H^0(X,Z^{p,q})}{\overline{\partial}H^0(X,A^{p,q-1})} \cong H^{p,q}(X,E),
\end{align*}
which establishes the theorem.
\end{proof}

Dolbeault's theorem allows us to pass from holomorphic $d$-forms to smooth $(d,d-1)$-forms, which we can integrate over the boundary of $d$-dimensional complex regions.  In the
derivation of our multidimensional residue formula we will pass a cohomology class through the
maps $i_p$ to obtain an explicit representative of the
corresponding Dolbeault cohomology class.

\section{A Multiparameter Residue for Analytic Matrix Functions}
\label{sec:theory}
In this section we develop our multiparameter residue formula for analytic matrix functions.  Our derivation follows the same general strategy as used in the derivation of the scalar Grothendieck residue in~\cite[p. 647-656]{griffiths1978ag}. We consider an analytic multiparameter eigenvalue problem $P$ as in \cref{eq:mepform} with analytic matrix functions $P_i \in \Omega(U,\C^{n_i \times n_i})$. 
Our goal is to derive an identity of the form
\[
\int_{\partial C} \Phi(\mathbf z)
=
\sum_{\mathbf z^* \in C}
\mathcal{R}(\mathbf z^*),
\]
where $\Phi$ is a differential form constructed from the matrix functions $P_i$ and $\mathcal{R}(\mathbf z^*)$ encodes the eigenstructure of the multiparameter eigenvalue problem at each eigenvalue $\mathbf z^* \in C$. Such an identity allows eigenvalues inside a region $C \subset \mathbb C^d$ to be recovered from contour integrals over $\partial C$.
Our derivation proceeds in four main steps:
\begin{enumerate}[leftmargin=*]
\item We define a local residue associated with an isolated eigenvalue (see~\cref{eq:matrixgrothres}).
\item We compute this residue explicitly when the eigenvalue is simple (see~\cref{thm:simplematrixgrothres}).
\item Using Dolbeault's theorem, we convert the local residue integral into an
integral over the boundary of a standard region in $\C^d$ (see~\cref{thm:globalrescoho,thm:omegarep}).
\item Finally we sum these contributions to obtain a global contour formula (see~\cref{thm:globalres}).
\end{enumerate}
The resulting identity is the multidimensional analogue of the residue formula underlying Beyn's contour method and gives us a contour-based eigensolver for MEPs (see~\cref{sec:alg}).

A fundamental obstacle in extending residue formulas to matrix-valued functions is the noncommutativity of matrix multiplication.  In the scalar theory the order of factors in expressions such as $p_1(z)\cdots p_d(z)$ is irrelevant, whereas for matrices the order of multiplication matters.  To circumvent this difficulty we embed the multiparameter eigenvalue problem into a larger commuting system using Kronecker products.  This idea appears in the classical operator determinant method~\cite{atkinson1972multieig} and in recent work on polynomial MEPs~\cite{graf2025pmep}.   Specifically,  we define
\begin{equation}
Q_i =
I_{n_1}\otimes\cdots\otimes I_{n_{i-1}}
\otimes P_i
\otimes I_{n_{i+1}}\otimes\cdots\otimes I_{n_d},
\qquad
1\le i\le d.
\end{equation}
The matrices $Q_1,\ldots,Q_d$ have a common eigenvector given by 
$
\vv_1\otimes\cdots\otimes\vv_d,
$
and commute with one another.  This allows us to apply residue constructions that mimic the scalar case.  

Importantly, these large matrices are introduced only for the theoretical derivation of the residue formula; the numerical algorithm developed later does not construct them explicitly.

\subsection{A Local Residue Associated With an Isolated Eigenvalue}
\label{subsec:locres}

We begin by defining a local residue associated with a single isolated eigenvalue of the MEP.   Suppose that
$\zz^* = (z_1^*,\ldots,z_d^*)$ is an eigenvalue of $P$ and that there exists a neighborhood $U \subset \C^d$ of $\zz^*$ containing no other eigenvalues of $P$. For the purpose of defining a local residue, we restrict attention to the problem $P$ on the neighborhood $U$.

Let $g \in \Omega(U,\C)$ be a holomorphic scalar function.   We define the matrix-valued differential form
\begin{equation} \label{eq:matrixomega}
\omega(\zz) = g(\zz)(Q_1(\zz) \cdots Q_d(\zz))^{-1} d \zz =  \frac{g(\zz)}{Q_1 \cdots Q_d} d \mathbf{z},
\end{equation}
where $d \zz = d z_1 \wedge \cdots \wedge d z_d$.  Equivalently,  we can write 
$$\omega(\zz) = g(\zz)(P_1(\zz) \otimes \cdots \otimes P_d(\zz))^{-1} d \zz.
$$
To define the integration contour, let 
\begin{equation} \label{eq:matrixgamma}
\Gamma = \{ \zz \in U : \lvert \textrm{det}(P_i) \rvert = \epsilon , 1 \leq i \leq d\},
\end{equation}
for a sufficiently small constant $\epsilon$,  chosen so that $\Gamma$ is contained in $U$.  We define the matrix Grothendieck residue of $P$ with respect to $g$ at $\zz^*$ by 
\begin{equation} \label{eq:matrixgrothres}
\operatorname{Res}_P(g,\zz^*) = \left( \frac{1}{2 \pi i } \right)^d \int_{\Gamma} \omega.
\end{equation}

Our first result shows that this residue encodes the eigenstructure of the MEP in a neighborhood of $\zz^*$.  Throughout this section we assume that $\zz^*$ is a simple eigenvalue, meaning that $\zz^*$ is a simple root of the scalar analytic system $\{ \textrm{det}(P_i) \}_{i=1}^d$. 
\begin{theorem} \label{thm:simplematrixgrothres}
Suppose that $\zz^* = (z_1^*,\ldots,z_d^*)$ is a simple eigenvalue of the MEP $P = \{P_i,1 \leq i \leq d\}$ in \cref{eq:mepform}, with right eigenvectors $\vv_1,\ldots,\vv_d$ and left eigenvectors $\ww_1,\ldots,\ww_d$. Furthermore,  assume the eigenvectors are scaled so that
\begin{equation} \label{eq:evscale}
\ww_i^H \frac{\partial P_i}{\partial \textrm{det}(P_i)}(\zz^*) \vv_i = 1, \quad 1 \leq i \leq d.\footnote{For readers familiar with Keldysh's theorem, the scale may seem odd.  This altered formula comes from the fact that we can only apply Keldysh's univariate theorem after a suitable change of coordinates, so the scaling we obtain depends on $\frac{\partial P_i}{\partial \textrm{det}(P_i)}$ rather than on $\frac{\partial P_i}{\partial x_j}$ or some other function. In practice, the scale of the eigenvectors is irrelevant to our method.}
\end{equation}
Let $p_i = \textrm{det}(P_i)$ for $1 \leq i \leq d$ and denote by $Jp$ the Jacobian matrix of the system $p = \{p_i\}$. Then, for sufficiently small $\epsilon$, 
\begin{equation} \label{eq:simplematrixgrothres}
	\operatorname{Res}_P(g,\zz^*) = \frac{g(\zz^*)}{\textrm{det}(Jp(\zz^*))} (\vv_1 \otimes \cdots \otimes \vv_d) (\ww_1 \otimes \cdots \otimes \ww_d)^H .
\end{equation}
\end{theorem}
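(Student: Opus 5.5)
The plan is to reduce the matrix residue to an iterated one-variable Cauchy integral by changing coordinates to $\mathbf{u} = p(\zz) = (p_1(\zz),\ldots,p_d(\zz))$, and then to identify the coefficient using \cref{thm:Keldysh}.

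First, since $\zz^*$ is a simple root of $p$, we have $\textrm{det}(Jp(\zz^*))\neq 0$. By the holomorphic inverse function theorem, $p$ is then a biholomorphism from a neighborhood $V\subset U$ of $\zz^*$ onto a polydisc around $0\in\C^d$. For $\epsilon$ small, $\Gamma$ is the preimage of the torus $T_\epsilon=\{|u_i|=\epsilon\}$, and the orientation convention $d\arg p_1\wedge\cdots\wedge d\arg p_d\ge 0$ is exactly the standard orientation of $T_\epsilon$. Pulling back gives $d\zz = \textrm{det}(Jp)^{-1}\,d\mathbf{u}$. This is the reading of $\partial P_i/\partial\,\textrm{det}(P_i)$ in \cref{eq:evscale}: it is $\partial \tilde P_i/\partial u_i$, where $\tilde P_i(\mathbf u)=P_i(\zz(\mathbf u))$, evaluated at $\mathbf u=0$ with the other $u_j$ fixed.

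Second, I use the Kronecker identity $(P_1\otimes\cdots\otimes P_d)^{-1}=P_1^{-1}\otimes\cdots\otimes P_d^{-1}$ together with $P_i^{-1}=\operatorname{adj}(P_i)/\textrm{det}(P_i)$. In $\mathbf u$-coordinates this gives
\[
\omega = \frac{g(\zz(\mathbf u))\,\textrm{det}(Jp)^{-1}\,\operatorname{adj}(\tilde P_1)\otimes\cdots\otimes\operatorname{adj}(\tilde P_d)}{u_1u_2\cdots u_d}\,d\mathbf u .
\]
The numerator $F(\mathbf u)$ is holomorphic on the polydisc. Fubini and the iterated one-variable Cauchy formula on $T_\epsilon$ then give
\[
\operatorname{Res}_P(g,\zz^*)=F(0)=\frac{g(\zz^*)}{\textrm{det}(Jp(\zz^*))}\,\operatorname{adj}(P_1(\zz^*))\otimes\cdots\otimes\operatorname{adj}(P_d(\zz^*)).
\]

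Third, it remains to show that $\operatorname{adj}(P_i(\zz^*))=\vv_i\ww_i^H$ under the scaling \cref{eq:evscale}. Restrict $\tilde P_i$ to the complex line $u_j=0$ for $j\neq i$, giving the one-variable function $A(u_i)$. Since $\textrm{det}A(u_i)=u_i$ has a simple zero at $0$, the eigenvalue $0$ of $A$ is simple in the sense of \cref{thm:Keldysh}. Its right and left eigenvectors are $\vv_i,\ww_i$, because $A(0)=P_i(\zz^*)$. The normalization \cref{eq:keldyshorth} becomes $\ww_i^H A'(0)\vv_i=1$, which is exactly \cref{eq:evscale}. \Cref{thm:Keldysh} then gives $A(u_i)^{-1}=u_i^{-1}\vv_i\ww_i^H+R(u_i)$ with $R$ holomorphic. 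Multiplying by $u_i=\textrm{det}A(u_i)$ and letting $u_i\to0$ yields $\operatorname{adj}(A(0))=\vv_i\ww_i^H$. Finally, the mixed-product rule $\bigotimes_i \vv_i\ww_i^H=(\vv_1\otimes\cdots\otimes\vv_d)(\ww_1\otimes\cdots\otimes\ww_d)^H$ gives \cref{eq:simplematrixgrothres}.

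The main obstacle is the first step. There I must check carefully that $\Gamma$ is, for small $\epsilon$, exactly the compact real $d$-torus $p^{-1}(T_\epsilon)$ inside $V$ with the correct orientation. I also need to confirm that the scaling in \cref{eq:evscale} is well defined, i.e.\ that it refers to the $u_i$-derivative in the coordinates $\mathbf u=p(\zz)$. The remaining steps are algebraic identities plus a direct one-variable application of \cref{thm:Keldysh}.
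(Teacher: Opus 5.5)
Your proof is correct, but after the shared first step it takes a genuinely different route from the paper. Both arguments begin with the change of coordinates $x_i=\textrm{det}(P_i)$, which turns $\Gamma$ into a torus.

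\textbf{The paper's route.} It integrates one variable at a time. At each stage it applies \cref{thm:Keldysh} to $x_1\mapsto P_1^*(x_1,x_2,\ldots,x_d)$, treating the remaining variables as parameters. It carries parameter-dependent eigentriples $\lambda_1(x_2,\ldots,x_d),\vv_1(\cdot),\ww_1(\cdot)$ through the iteration and lets them collapse to $0,\vv_i,\ww_i$ at the last step.

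\textbf{Your route.} You factor $P_i^{-1}=\operatorname{adj}(P_i)/\textrm{det}(P_i)$, so all the singular behaviour sits in the scalar factor $1/(u_1\cdots u_d)$. The matrix residue is then, entrywise, a scalar Grothendieck residue; it could even be read off from \cref{eq:simplescalarGrothres} without changing coordinates. Keldysh is needed only once, in one variable and at a single point, to identify $\operatorname{adj}(P_i(\zz^*))=\vv_i\ww_i^H$.

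\textbf{What each buys.} Your approach avoids two things the paper uses tacitly: holomorphic dependence of eigenvectors on parameters, and holomorphy of the intermediate integrands in the remaining variables. In these coordinates $\lambda_1\equiv 0$ (since $\textrm{det}P_1^*=x_1$), which is exactly the structure your factorization exploits. You also get the scaling-free intermediate formula
\[
\operatorname{Res}_P(g,\zz^*)=\frac{g(\zz^*)}{\textrm{det}(Jp(\zz^*))}\operatorname{adj}(P_1(\zz^*))\otimes\cdots\otimes\operatorname{adj}(P_d(\zz^*)).
\]
The paper's route keeps the parallel with Beyn's derivation and explains where the unusual scaling in \cref{eq:evscale} comes from.

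\textbf{The Keldysh step can be replaced.} Since $\operatorname{adj}(P_i(\zz^*))=c\,\vv_i\ww_i^H$ has rank one, you can use Jacobi's formula instead. Differentiating $\textrm{det}\tilde P_i=u_i$ gives $1=c\,\ww_i^H\,\partial_{u_i}\tilde P_i(0)\,\vv_i$, so $c=1$ under \cref{eq:evscale}. This also shows the normalization does not depend on how the other coordinates are chosen.

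\textbf{The obstacle you flag.} Take $U$ relatively compact, with $\zz^*$ the only common zero of $p$ on $\overline U$. By compactness, the sets $\{\zz\in\overline U:|p_i(\zz)|\le\epsilon\ \forall i\}$ shrink to $\{\zz^*\}$ as $\epsilon\to 0$. So for small $\epsilon$ they lie inside your biholomorphic neighborhood $V$, and $\Gamma$ is exactly $p^{-1}(T_\epsilon)$ there. The paper passes over this point silently as well.
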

\begin{proof}
Because the eigenvalue $\zz^*$ is simple,  the functions $x_i = \textrm{det}(P_i)$, $1\leq i\leq d$ define a system of local coordinates in a neighborhood of $\zz^*$. Using these coordinates we rewrite the residue integral to obtain 
	\begin{align*}
	\operatorname{Res}_P(g,\zz^*)  &=  \left( \frac{1}{2 \pi i } \right)^d \int_{\Gamma} \frac{g(\zz)}{P_1 \otimes \cdots \otimes P_d} d \mathbf{z} \\
	&= \left( \frac{1}{2 \pi i } \right)^d \int_{\Gamma} \frac{g^*(\xx)}{P_1^*(\xx) \otimes  \cdots \otimes P_d^*(\xx)} \frac{d \xx}{\textrm{det}(Jp^*(\xx))},
	\end{align*}
where $g^*(\xx)$ denotes $g(\zz(\xx))$, the transformation of $g$ in the new coordinates, and similarly $Jp^*(\xx) = Jp(\zz(\xx))$. We let $P_i^* = P_i^*(\xx) = P_i(\zz(\xx))$ denote the same transformation of coordinates.  In these coordinates the contour decomposes as $\Gamma = \Gamma_1 \times \cdots \times \Gamma_d$, where $\Gamma_i = \{ x_i : |x_i| = \epsilon \}$, so the integral may be evaluated sequentially, one variable at a time.  We first integrate over $\Gamma_1$ with respect to $x_1$,  holding the other variables fixed. The function $\frac{g^*(\xx)}{P_2^*(\xx) \otimes \cdots \otimes P_d^*(\xx) \textrm{det}(Jp^*(\xx))}$ is a holomorphic function in $x_1$,  while $P_1^*(\xx)^{-1} = (P_1^*(x_1))^{-1}$ is a meromorphic function of $x_1$. Therefore, Keldysh's theorem applies to $P_1^*(x_1)$. 

Choosing the contour sufficiently small, for fixed $x_2,\ldots,x_d$,  the matrix function $P_1^*$ has a single, simple eigenvalue inside $\Gamma_1$.  Let $\lambda_1(x_2,\ldots,x_d),\vv_1(x_2,\ldots,x_d),\ww_1(x_2,\ldots,x_d)$ denote this eigenvalue, and the associated right and left eigenvectors.  Then \cref{eq:simpleglobalkeldysh} and \cref{eq:simplebeyn} give us
\begin{align*}
	\operatorname{Res}_P(g,\zz^*)  \!\! &= \!\! \left( \frac{1}{2 \pi i } \right)^d \int_{\Gamma_d}  \cdots \int_{\Gamma_1} \frac{g^*(\xx)}{P_1^*(\xx) \otimes \cdots \otimes P_d^*(\xx)} \frac{d \xx}{\textrm{det}(Jp^*(\xx))} \\
	&= \!\! \left( \frac{1}{2 \pi i } \right)^{d-1} \!\!\!\!\! \int_{\Gamma_d} \!\!\! \cdots \!\! \int_{\Gamma_2} \!\!\! \otimesfrac{g^*(\lambda_1,x_2,\ldots,x_d) \vv_1(x_2,\ldots,x_d)\ww_1(x_2,\ldots,x_d)^H d x_2 \cdots dx_d}{P_2^*(\xx) \otimes \cdots  \otimes P_d^*(\xx) \textrm{det}(Jp^*(\lambda_1,x_2,\ldots,x_d))},\\
\end{align*}
where the fraction with the Kronecker product symbol $\otimesfrac{\mathcal{N}}{\mathcal{D}}$ denotes $\mathcal{N} \otimes \mathcal{D}^{-1}$. Repeating this argument sequentially for the remaining variables produces the eigenvector factors one at a time.  At the final step we integrate with respect to $x_d$ to obtain
\begin{align*}
	\operatorname{Res}_P(g,\zz^*) 
	&= \left( \frac{1}{2 \pi i } \right) \int_{\Gamma_d} \otimesfrac{g^*(\lambda_1,\ldots,\lambda_{d-1},x_d) (\vv_1\otimes \cdots \otimes \vv_{d-1})(\ww_1\otimes \cdots \otimes \ww_{d-1})^H dx_d}{P_d^*(x_d) \textrm{det}(Jp^*(\lambda_1,\ldots,\lambda_{d-1},x_d))},
\end{align*}
where $\vv_i(x_d),\ww_i(x_d),\lambda_i(x_d)$ are functions of $x_d$ that give the eigenvalue-eigenvector triples for $P_i$ as a function of $x_d$. The integrand is singular only at $x_d=0$, where $\vv_i(x_d),\ww_i(x_d),\lambda_i(x_d)$ all collapse to $\vv_i,\ww_i,0$.  Evaluating the final residue gives
\begin{align*}
	\operatorname{Res}_P(g,\zz^*) 
&= \frac{g^*(0)}{\textrm{det}(Jp^*(0))} (\vv_1 \otimes \cdots \otimes \vv_d) (\ww_1 \otimes \cdots \otimes \ww_d)^H
\end{align*}
Finally, the coordinate transformation satisfies
$\xx =0 \iff \zz =\zz^*$,  so rewriting the expression in the original
coordinates yields~\cref{eq:simplematrixgrothres}.
\end{proof}

Simple eigenvalues are generic, and numerical algorithms generally cannot distinguish between a multiple eigenvalue and several tightly clustered simple eigenvalues.  Consequently,  for algorithmic purposes~\cref{thm:simplematrixgrothres} is sufficient. This mirrors the practical implementation of univariate contour solvers, where the simple-eigenvalue formulas in~\cref{thm:Keldysh,eq:simplebeyn} are typically used.  Observe that \cref{thm:simplematrixgrothres} already closely resembles a direct  multiparameter analogue of \cref{eq:simplebeyn}.  It becomes a strict generalization once the contour $\Gamma$ does not need to depend on the MEP $P$. 

\subsection{The Global Residue}
\label{subsec:globalres}

While the local residue provides the key building block of the theory, it is not directly suitable for numerical computation. The contour $\Gamma$ defined in \cref{eq:matrixgamma} depends explicitly on the matrix functions $P_i$,  and is constructed only in a small neighborhood of a single eigenvalue.  To obtain a practical contour method we therefore seek a global formulation analogous to the integral identity in~\cref{eq:simplebeyn}. The development of this section follows the classical 
construction of the Grothendieck residue presented in~\cite[Ch.~5]{griffiths1978ag}.

Our goal is to replace the special contour $\Gamma$ by an integral over the boundary of a standard complex region. This is achieved by passing the differential form $\omega$ through the Dolbeault isomorphism (see~\cref{thm:dolbeault}).  Let $U$ be a neighborhood of an eigenvalue $\zz^* \in \C^d$ containing no other eigenvalues of $P$, and suppose that $U$ is relatively compact with smooth boundary.  Let $N = \prod_{i=1}^d n_i$ denote the size of the matrices $Q_i$,  so that $Q_i \in \Omega(U,\C^{N \times N})$.  For each $1 \leq i \leq d$ define the hypersurfaces $D_i = \{ \textrm{det}(P_i) = 0 \} \cap U$, so that by construction the intersection of these hypersurfaces contains the eigenvalue,  i.e.,  $D_1\cap\cdots\cap D_d = \{\zz^*\}$.   We introduce the punctured domain $U^* = U \setminus (D_1 \cap \cdots \cap D_d) = U \setminus \{\zz^*\}$, and for each $i$,  set $U_i = U \setminus D_i$, so that $\underline{U} = \{U_i\}_{i=1}^d$ is an open cover of $U^*$. 

We use the notation $C^p(\underline{U},\F)$ and $H^p(\underline{U},\F)$  to denote the sheaf cohomology $p$-cochains and the associated cohomology groups for a sheaf $\F$ with respect to the cover $\underline{U}$, following~\cite[p.~115]{kodaira2005manifolds}. For our sheaves of differential forms, an element of $C^p$, or a representative of a class in $H^p$, is a set of differential forms, with one differential form defined on each possible intersection of $p+1$ sets in the cover $\underline{U}$. We also use the sheaf cohomology coboundary operator $\delta:C^{p}(\underline{U},\mathcal{F}) \to C^{p+1}(\underline{U},\mathcal{F})$, defined by~\cite[p. 115--116]{kodaira2005manifolds} as
\begin{equation} \label{eq:delta}
(\delta \sigma)_{i_0,\ldots,i_{p+1}} = \sum_{j=0}^{p+1} (-1)^j \sigma_{i_0 \cdots \hat{i_j} \cdots i_{p+1}} \vert_{U_{i_0} \cap \cdots \cap U_{i_{p+1}}},
\end{equation}
where $i_0 \cdots \hat{i_j} \cdots i_{p+1}$ means to omit $i_j$ from the index.\footnote{The definition in~\cite[p.~38]{griffiths1978ag} is missing the $\hat{}$ over $\hat{i_j}$, so we reference the correct definition in~\cite[p. 115--116]{kodaira2005manifolds} instead.} We can view the differential form $\omega = \frac{g(\zz)}{Q_1 \cdots Q_d} d \zz \in C^{d-1}(\underline{U},\Omega^d(U^*,\C^{N \times N}))$, as elements of $C^{d-1}(\underline{U},\Omega^d(U^*,\C^{N \times N}))$ are holomorphic $d$-forms defined on the neighborhood $\cap_{i=1}^d U_i$. Furthermore, we have $\delta \omega = 0$ trivially because $C^d(\underline{U},\Omega^d(U^*,\C^{N \times N})) = 0$, as the cover $\underline{U}$ only has $d$ elements. Therefore there is a representative of $\omega$ in $H^{d-1}(\underline{U},\Omega^d(U^*,\C^{N \times N}))$, which is the equivalence class of $\omega$ in $H^{d-1}(\underline{U},\Omega^d(U^*,\C^{N \times N}))$, which we also denote by $\omega$.

The next step is to pass from sheaf cohomology to Dolbeault cohomology. In order to obtain concrete representatives we work with cohomology defined with respect to the cover $\underline U$.  By Cartan's theorem B~\cite[p. ~243]{gunning2022analytic},  if every intersection $U_{i_1} \cap \cdots \cap U_{i_{\ell}}$ is a Stein manifold, then, for every such intersection, $H^p(U_{i_1} \cap \cdots \cap U_{i_{\ell}},\Omega^q(U^*,\C^{N \times N})) = 0$ for $p > 0$, so Leray's theorem~\cite[p.~40]{griffiths1978ag} implies that $H^{p}(U^*,\Omega^q(U^*,\C^{N \times N})) = H^{p}(\underline{U},\Omega^q(U^*,\C^{N \times N}))$; the same statement holds for the sheaves $A^{p,q}$ and $Z^{p,q}$ (these sheaves are sheaves of germs of sections of analytic vector bundles,\footnote{For example, $A^{p,q}$ consists of sections of the vector bundle whose points are $(\zz,\nu(\zz))$, for $\zz \in \C^d$ and $\nu \in A^{p,q}$, with projection onto $\C^d$. A local section of this vector bundle, which is a right inverse of the projection map, is just an element of $A^{p,q}$ restricted to some open set.} and therefore coherent sheaves by~\cite[p. 167]{hormander1973scv}). Since each intersection of the open sets $U_i$ is a domain of holomorphy and therefore Stein~\cite[p.~109]{hormander1973scv},  the hypotheses of Leray's theorem are satisfied.  Thus we may work entirely with cohomology defined with respect to the cover $\underline U$.
If we pass $\left( \frac{1}{2 \pi i} \right)^d \omega$ through the Dolbeault isomorphism $H^{d-1}(U^*,\Omega^d(U^*,\C^{N \times N})) \xrightarrow{\sim} H^{d,d-1}(U^*,\Omega(U^*,\C^{N \times N}))$ in \cref{thm:dolbeault},  we obtain a representative $\eta_{\omega} \in H^{d,d-1}(U^*,\Omega(U^*,\C^{N \times N}))$, the image of $\left( \frac{1}{2 \pi i} \right)^d \omega$ under the Dolbeault isomorphism. 
We can naturally integrate elements of $H^{d,d-1}$ on the boundary of arbitrary $d$-dimensional complex regions; in practice $\eta_{\omega}$ is what we can plug into a practical integral formula for our solver. Crucially, while $\eta_{\omega}$ is abstractly a cohomology class, we will be able to obtain a concrete representative in \cref{thm:omegarep}. First, we prove that integrating $\eta_{\omega}$ on the boundary of $U$ gives the local residue of an eigenvalue $\zz^*$,  i.e., the same result as integrating $\omega$ on $\Gamma$.

\begin{theorem} \label{thm:globalrescoho}
	Suppose that $\zz^* = (z_1^*,\ldots,z_d^*)$ is a simple eigenvalue of the MEP $P = \{P_i,1 \leq i \leq d\}$ in~\cref{eq:mepform}.  Let $\omega$ and $\eta_{\omega}$ be defined as in the previous discussion,  and let $\Gamma$ be the contour defined in~\cref{eq:matrixgamma}. Then
	\begin{equation} \label{eq:globalrescoho}
	\left( \frac{1}{2 \pi i} \right)^d \int_\Gamma \omega = \int_{\partial U} \eta_{\omega}
	\end{equation}
\end{theorem}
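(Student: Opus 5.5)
The plan is to follow the scalar argument of Griffiths--Harris \cite[p.~651--655]{griffiths1978ag} and work entirely with Čech cochains for the cover $\underline U$, which Leray's theorem permits. Since all forms in sight take values in the trivial bundle $U^*\times\C^{N\times N}$, integration and Stokes' theorem act entrywise. Matrix noncommutativity therefore never enters, because $\omega$ only ever appears as a single matrix-valued form. The strategy is to build an explicit zig-zag through the double complex $C^p(\underline U, A^{d,q})$ connecting $\left(\frac{1}{2\pi i}\right)^d\omega\in C^{d-1}(\underline U,\Omega^d)$ to a global $(d,d-1)$-form $\eta\in C^0$. Integrating this zig-zag against a matching chain decomposition of $\partial U$ will let Stokes' theorem transfer the integral step by step.

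\textbf{Step 1: unwind the Dolbeault isomorphism.} I will choose a partition of unity $\{\rho_i\}$ on $U^*$ subordinate to $\underline U$, for instance $\rho_i=|p_i|^2/\sum_j|p_j|^2$. Using the explicit connecting maps in the proof of \cref{thm:dolbeault}, this produces cochains $\eta^{(k)}\in C^{d-1-k}(\underline U,A^{d,k})$ for $k=0,\ldots,d-1$, with $\eta^{(0)}=\left(\frac{1}{2\pi i}\right)^d\omega$. They satisfy the relations
\[
\overline\partial\eta^{(k)}=\delta\eta^{(k+1)} \quad (0\le k\le d-2),\qquad \delta\eta^{(d-1)}=0 .
\]
The last relation means $\eta^{(d-1)}$ glues to a global form $\eta_\omega$ on $U^*$. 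The starting equation $\overline\partial\eta^{(0)}=0$ holds because $\omega$ is holomorphic on $U_1\cap\cdots\cap U_d$.

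\textbf{Step 2: decompose $\partial U$ into a chain.} I will pick a small $\epsilon$ and set $U_{i,\epsilon}=\{|p_i|>\epsilon\}$. I will then build real chains $\Gamma_I$ of dimension $2d-|I|$, indexed by subsets $I\subseteq\{1,\dots,d\}$. These are the pieces of $\partial U$ on which $|p_i|\ge\epsilon$ and $|p_i|$ is largest, intersected with the tubes $\{|p_j|=\epsilon,\ j\in I\}$, with $\Gamma_{\{1,\dots,d\}}=\pm\Gamma$. Each $\Gamma_I$ will lie inside $\bigcap_{i\in I}U_i$, so the corresponding component of $\eta^{(k)}$ is defined on it. The boundaries should satisfy
\[
\partial\Gamma_I=\sum_{j\notin I}\pm\Gamma_{I\cup\{j\}},
\]
that is, $\partial$ acts on the chains as the dual of $\delta$. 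Stokes' theorem then gives
\[
\int_{\partial\Gamma_I}\eta^{(k)}_{\cdot}=\int_{\Gamma_I}d\eta^{(k)}_{\cdot}=\int_{\Gamma_I}\overline\partial\eta^{(k)}_{\cdot}.
\]
The holomorphic part $\partial$ of $d$ contributes nothing, since every form here already has full $dz$-degree $d$.

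\textbf{Step 3: telescope.} Starting from $\int_{\partial U}\eta_\omega=\sum_{|I|=1}\int_{\Gamma_I}\eta^{(d-1)}_I$, I will repeatedly apply Stokes together with $\overline\partial\eta^{(k)}=\delta\eta^{(k+1)}$. Each application moves one level down the zig-zag, and after $d-1$ steps the chain is $\Gamma$ and the form is $\eta^{(0)}=\left(\frac{1}{2\pi i}\right)^d\omega$. This yields \cref{eq:globalrescoho}.

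\textbf{Main obstacle.} I expect the hard part to be the sign bookkeeping and orientations in Step 2. The chains must be oriented so that the boundary identity exactly matches the sign convention of $\delta$ in \cref{eq:delta}. It also has to match the orientation $d\arg p_1\wedge\cdots\wedge d\arg p_d\ge 0$ on $\Gamma$. A secondary technical point is that the sets $\Gamma_I$ are only piecewise smooth, so Stokes' theorem must be applied on manifolds with corners. I would first work out the case $d=2$ explicitly to fix the signs and then argue by induction on $|I|$. The simplicity of $\zz^*$ is used only to guarantee that the $p_i$ form local coordinates, so the tubes are smooth tori near $\zz^*$.
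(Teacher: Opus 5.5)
There is a genuine gap in Steps~1 and~3. For the cochains produced by the connecting maps in the proof of \cref{thm:dolbeault}, your relation $\overline\partial\eta^{(k)}=\delta\eta^{(k+1)}$ holds only as $0=0$. Each $\eta^{(k)}\in C^{d-1-k}(\underline U,A^{d,k})$ is simultaneously $\overline\partial$-closed and a \v{C}ech cocycle, since it represents a class in $H^{d-1-k}(\underline U,Z^{d,k})$. So these relations carry no information linking $\omega$ to $\eta_\omega$. For $d=2$ they say only that $\omega$ is holomorphic and that $\eta^{(1)}$ glues, and $\eta^{(1)}=0$ satisfies them too.

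What links consecutive levels are the lifts $\xi^{(k+1)}\in C^{d-2-k}(\underline U,A^{d,k})$ with $\delta\xi^{(k+1)}=\eta^{(k)}$ and $\overline\partial\xi^{(k+1)}=\eta^{(k+1)}$. These are the paper's $\xi_p$, with $\delta\xi_p=\omega_p$ and $\overline\partial\xi_p=\omega_{p-1}$, and Stokes' theorem has to be applied to them. As written, your step $\int_{\partial\Gamma_I}\eta^{(k)}_{\cdot}=\int_{\Gamma_I}\overline\partial\eta^{(k)}_{\cdot}$ does not parse. Matching dimensions forces $|I|=d-k-1$, but $\eta^{(k)}$ lives only on $(d-k)$-fold intersections. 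The boundary pieces $\Gamma_{I\cup\{j\}}$ carry different components $\eta^{(k)}_{I\cup\{j\}}$, which are not restrictions of one form on $\Gamma_I$. The correct telescoping step is
\[
\sum_{|J|=d-k-1}\int_{\Gamma_J}\eta^{(k+1)}_J=\sum_{|J|=d-k-1}\int_{\partial\Gamma_J}\xi^{(k+1)}_J=\sum_{|I|=d-k}\int_{\Gamma_I}\bigl(\delta\xi^{(k+1)}\bigr)_I=\sum_{|I|=d-k}\int_{\Gamma_I}\eta^{(k)}_I ,
\]
using $\partial\Gamma_J=\sum_{j\notin J}\pm\Gamma_{J\cup\{j\}}$ with signs matching $\delta$. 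Your scalar choice $\rho_i=|p_i|^2/\sum_j|p_j|^2$ is actually fine for building these lifts. Indeed, $(Q_1\cdots Q_d)^{-1}$ is the holomorphic matrix $\operatorname{adj}P_1\otimes\cdots\otimes\operatorname{adj}P_d$ divided by $p_1\cdots p_d$, so the scalar Griffiths--Harris lifts apply entrywise. The problem is only that the lifts are missing from your argument.

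The chains in Step~2 are also inconsistent. The set $\partial U\cap\{|p_j|=\epsilon,\ j\in I\}$ has real dimension $2d-1-|I|$, not $2d-|I|$. Moreover $\Gamma$ lies near $\zz^*$ rather than on $\partial U$, so $\Gamma_{\{1,\ldots,d\}}=\pm\Gamma$ cannot hold. The paper localizes all chains at $\zz^*$, taking $\Gamma_I=\{|p_i|=\epsilon \text{ for } i\in I,\ |p_j|\le\epsilon \text{ for } j\notin I\}$. In the coordinates $p_1,\ldots,p_d$ (this is where simplicity is used) these are products of circles and closed discs. Hence $\Gamma_I\subset U_I$, the boundary formula is immediate, and $\Gamma_{\{1,\ldots,d\}}=\Gamma$.

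The boundary $\partial U$ enters only at the very end. With $\Gamma_\emptyset=\{|p_i|\le\epsilon \text{ for all } i\}$, one has $\sum_i\int_{\Gamma_{\{i\}}}\eta_\omega=\int_{\partial\Gamma_\emptyset}\eta_\omega=\int_{\partial U}\eta_\omega$, by Stokes on $U\setminus\Gamma_\emptyset$ together with $d\eta_\omega=0$. If you prefer to decompose $\partial U$ according to which $|p_i|$ is maximal, the $d$-dimensional piece is $\partial U\cap\{|p_1|=\cdots=|p_d|\}$. You would then have to prove separately that it is homologous to $\pm\Gamma$ in $U_1\cap\cdots\cap U_d$.
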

\begin{proof}
The argument follows the same construction as the scalar case in~\cite[p. 651--653]{griffiths1978ag}.  Since $\delta \omega = 0$, the form $\omega_{d-1} = \left( \frac{1}{2 \pi i} \right)^d \omega$ defines a class in $H^{d-1}(\underline{U},\Omega^d(U^*,\C^{N \times N})) = H^{d-1}(U^*,\Omega^d(U^*,\C^{N \times N}))$. We know that $\eta_\omega$ is obtained from $\omega$ through the sequence of isomorphisms in the proof of \cref{thm:dolbeault}.  We track the form $\omega$ through this sequence of isomorphisms. 

Let $U_I = \cap_{i \in I} U_i$ for $I \subset \{1,\ldots,d\}$. For $p = d-1,\ldots,0$, denote by $\omega_p = \{\omega_{p,I} \in Z^{d,d-p-1}(U_{I})\}_{|I| = p+1} \in H^{p}(\underline{U},Z^{d,d-p-1})$ the image of $\omega_{d-1}$ partway through the chain of isomorphisms used in the proof of \cref{thm:dolbeault}. From the construction of the long exact cohomology sequence~\cite[p.~126--127]{kodaira2005manifolds},  and because we are in a setting where the cohomology groups with respect to $\underline{U}$ are equivalent to the sheaf cohomology groups, there exist forms $\xi_p = \{\xi_{p,I} \in A^{d,d-p-1}(U_{I})\}_{|I| = p} \in C^{p-1}(\underline{U},A^{d,d-p-1})$ such that $\delta \xi_p = \omega_p$ and $\overline{\partial} \xi_p = \omega_{p-1}$,  where $\overline{\partial}$ is the Dolbeault operator, and $\delta:C^{p-1}(\underline{U},A^{d,d-p-1}) \to C^{p}(\underline{U},A^{d,d-p-1})$ is the sheaf cohomology coboundary operator in \cref{eq:delta}.\footnote{The occurrence of $\delta$ and $\overline{\partial}$ in this manner is explained precisely by the construction of the long exact sequence in sheaf cohomology in~\cite[p.~126--127]{kodaira2005manifolds} or~\cite[p.~40]{griffiths1978ag}. This is analogous to the scalar construction in~\cite[p. 651--653]{griffiths1978ag}.}

To relate these forms to contour integrals we introduce the chains
$$
\Gamma_I = \{ \mathbf{z} : |\textrm{det}(P_i(\mathbf{z}))| = \epsilon \text{ for } i \in I, |\textrm{det}(P_j(\mathbf{z}))| \leq \epsilon \text{ for } j \notin I \},
$$
where the orientation is chosen so that
$$
\left(
\bigwedge_{j \notin I}
\frac{\sqrt{-1}}{2}\, dp_j \wedge \overline{dp_j}
\right)
 \wedge d(\arg p_{i_1}) \wedge \cdots \wedge d(\arg p_{i_p})
> 0,\footnote{The textbook~\cite[p.~652]{griffiths1978ag} gives the orientation with the $2$-forms after the $1$-forms, so that $(j,I \cup \{j\})$ in the boundary is the index counting from the back. This disagrees with the definition of $\delta$, where the alternating sign convention counts from the front, so we have modified so that the orientation on the boundary lines up with the alternating sign from $\delta$. The result is valid either way, as the difference is only in a possible global sign.}
$$
with $p_i = \textrm{det} (P_i)$. This corresponds to the standard counterclockwise orientation on the boundary circles for $i\in I$ and the standard orientation of the
interior discs for $j\notin I$.  The boundary of $\Gamma_I$ is
$$
\partial \Gamma_I = \sum_{j \notin I} (-1)^{(j,I \cup \{j\})} \Gamma_{I \cup \{j\}},
$$
where $(j,I \cup \{j\})$ denotes the position of $j$ in the ordered index set $I \cup \{j\}$, with the first element having position $0$. The sign on each term comes from the number of $1$-forms that must be ``stepped over'' for $d(\arg p_j)$ to reach the correct position in the sequence of $1$-forms in $\Gamma_{I \cup \{j\}}$.   Because the exterior derivative satisfies $d = \overline{\partial}$ on $(d,p)$-forms for all $p$,  Stokes' theorem yields
\begin{align*}
    \sum_{|I| = p} \int_{\Gamma_I} \omega_{p-1,I} &= \sum_{|I| = p} \int_{\Gamma_I} d \xi_{p,I}\\
    &= \sum_{|I| = p} \int_{\partial \Gamma_I} \xi_{p,I}\\
    &= \sum_{|I| = p} \left( \sum_{j \notin I} \int_{\Gamma_{I \cup \{j\}}} (-1)^{(j,I \cup \{j\})} \xi_{p,I} \right)\\
    &= \sum_{|J| = p+1} \left(  \int_{\Gamma_{J}} \sum_{j \in J} (-1)^{(j,J)} \xi_{p,J \setminus \{j\}} \right)\\
    &= \sum_{|J| = p+1} \int_{\Gamma_{J}} (\delta \xi_{p})_J \\
    &= \sum_{|J| = p+1} \int_{ \Gamma_{J}} \omega_{p,J}.
\end{align*}
Therefore the sum
$$
\sum_{|I| = p+1} \int_{ \Gamma_{I}} \omega_{p,I}
$$
is independent of $p$.  Evaluating the expression for $p=d-1$ and $p=0$ gives
\begin{align*}
    \left( \frac{1}{2 \pi i} \right)^d \int_{\Gamma} \omega &= \sum_{i = 1}^d \int_{\Gamma_i} \omega_{0,i} \\
    &= \sum_{i = 1}^d \int_{\Gamma_i} \eta_{\omega} \\
    &= \int_{\partial \Gamma_0} \eta_{\omega}, \quad \text{where } \Gamma_0 = \{\zz : |\textrm{det}(P_i(\zz))| \leq \epsilon \text{ for } 1 \leq i \leq d\} \\
    &= \int_{\partial U} \eta_{\omega}, 
\end{align*}
where in the last step we use Stokes' theorem and the fact that $d \eta_{\omega} = 0$ to justify $\int_{\partial \Gamma_0} \eta_{\omega} = \int_{\partial U} \eta_{\omega}$. This proves the identity in~\cref{eq:globalrescoho}.
\end{proof}

Consequently,  any representative of the Dolbeault class $\eta_\omega$ may be integrated over the boundary of a complex region to obtain the same residue as the local contour integral over $\Gamma$.  This observation is crucial for numerical computation, since it allows the residue to be evaluated using standard quadrature on $\partial U$. Moreover, residues associated with several eigenvalues may be obtained by integrating over regions containing multiple eigenvalues.

In the next result we derive an explicit representative of
$\eta_\omega$ that can be evaluated pointwise in $\C^d$. The construction generalizes~\cite[p.~653--654]{griffiths1978ag}; we adopt a slightly modified definition of the auxiliary functions $\rho_i$ to simplify the resulting sign conventions.

\begin{theorem} \label{thm:omegarep}
Let $\omega$ and $\eta_{\omega}$ be as in \cref{thm:globalrescoho}, and define
\begin{equation} \label{eq:rho}
\rho_i = (-1)^{i-1} (Q_i Q_i^H) \left(\sum_{j=1}^d Q_j Q_j^H \right)^{-1}, \quad 1 \leq i \leq d.\footnote{The definition of $\rho_i$ in~\cite{griffiths1978ag} does not include the alternating sign. We believe that there is a small sign error in~\cite[p.~654]{griffiths1978ag} when defining $\xi_{\{i,j\}^0},\omega_{\{i,j\}^0}$, which likely does not affect the final result, but makes the implied induction in their proof unclear. In particular, it is impossible with their definition to have $\delta \xi_{d-2} = \omega_{d-2}$. We find it simplest to reconcile the sign issue with this modified definition.}
\end{equation}
Then a representative of the Dolbeault class $\eta_\omega$ is given by 
\begin{equation} \label{eq:omegarep}
\eta_{\omega} \! = \! \left\{\omega_{\{k\}} \! = \! \left( \frac{1}{2 \pi i} \right)^{d}  \sum_{\sigma \in S_{d-1}} (-1)^{\text{sgn}(\sigma)}  \overline{\partial} \rho_{\sigma(1)} \wedge \cdots \wedge \widehat{\overline{\partial} \rho_{\sigma(k)}} \wedge \cdots \wedge \overline{\partial} \rho_{\sigma(d)} \wedge \omega \right\}_{k=1}^d \!\!,
\end{equation}
where $S_{d-1}$ denotes the symmetric group on $d-1$ elements and the hat indicates omitting the term $\overline{\partial} \rho_{\sigma(k)}$.\footnote{Here, and throughout the proof, we let $S_{d-1}$ act on $\{1,\ldots,d\} \setminus \{k\}$ in the natural way by shifting $q \mapsto q-1$ for $q > d$ and then using the usual action on $\{1,\ldots,d-1\}$. Equivalently, this can be thought of as letting $S_{d-1}$ act on the position in the set $\{1,\ldots,d\} \setminus \{k\}$ rather than the number.}
\end{theorem}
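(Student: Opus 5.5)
We will construct the chain of cochains $\xi_p$ and $\omega_p$ from the proof of \cref{thm:globalrescoho} explicitly, mirroring~\cite[p.~653--654]{griffiths1978ag}, and check the two identities $\delta \xi_p = \omega_p$ and $\overline{\partial}\xi_p = \omega_{p-1}$ at every stage.

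The first observation is that $\sum_j Q_j Q_j^H$ is invertible on $U^*$. Indeed, a null vector $\mathbf{u}$ would satisfy $Q_j^H \mathbf{u} = 0$ for all $j$. Since the $Q_j$ commute and are simultaneously triangularizable, this produces a common eigenvector of the $Q_j$ with eigenvalue zero, hence a point of $D_1\cap\cdots\cap D_d$.

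Next we record the properties of the $\rho_i$:
\begin{itemize}
\item $\sum_i (-1)^{i-1}\rho_i = I$;
\item $\rho_i$ is smooth on $U^*$;
\item $\rho_i$ is supported away from the zero set of $Q_i$ only in the sense that $\rho_i (Q_i)^{-1}$ extends smoothly across $D_i$. Concretely, $Q_i^{-1}\rho_i = (-1)^{i-1}Q_i^H(\sum_j Q_jQ_j^H)^{-1}$ is smooth on all of $U^*$.
\end{itemize}
The third property is the partition-of-unity property that replaces the scalar $\rho_i=\overline{p_i}p_i/\|p\|^2$. It is the reason for placing $Q_iQ_i^H$ on the left, and the commutativity of the $Q_j$ with each other will be needed to move $Q_i^{-1}$ past $\omega$'s other factors. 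Because $Q_j Q_j^H$ need not commute with $Q_k$, I will check carefully that every product rearrangement only uses $Q_j Q_k = Q_k Q_j$ and the identity $\rho_i Q_i^{-1}\cdots$ where $Q_i^{-1}$ sits at the right end adjacent to $g(Q_1\cdots Q_d)^{-1}$.

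With these in hand, the cochains are defined inductively, following the scalar template. Set
$$\xi_{d-1,I} = \pm\bigl(\tfrac{1}{2\pi i}\bigr)^d \rho_{j}\,\omega$$
for $I = \{1,\ldots,d\}\setminus\{j\}$, which is smooth on $U_I$ by the third property. At the general stage,
$$\xi_{p,I} = c\sum_{\sigma}\operatorname{sgn}(\sigma)\,\rho_{\sigma(\cdot)}\,\overline{\partial}\rho_{\sigma(\cdot)}\wedge\cdots\wedge\omega,$$
with a suitable combinatorial sum over orderings of the complement of $I$, and
$$\omega_{p-1} = \overline{\partial}\xi_p.$$
The identity $\delta\xi_{d-1}=\omega_{d-1}$ follows from $\sum_i(-1)^{i-1}\rho_i = I$; the alternating sign built into $\rho_i$ is exactly what makes $\delta$'s sign convention match. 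For lower $p$, the identity $\delta\xi_p = \omega_p$ follows from the same partition identity together with its differentiated form $\sum_i (-1)^{i-1}\overline{\partial}\rho_i = 0$, after antisymmetrizing.

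Since $\omega$ is holomorphic, $\overline{\partial}$ only hits the $\rho$ factors. At $p=1$ we obtain $\omega_0=\overline{\partial}\xi_1$, and its component on $U_k$ is the sum over permutations of the $d-1$ factors $\overline{\partial}\rho_{\sigma(\ell)}$, $\ell\ne k$, wedged with $\omega$, which is \cref{eq:omegarep}. The $(-1)^{\operatorname{sgn}\sigma}$ arises because the wedge of matrix-valued 1-forms is not alternating, so the full permutation sum must be kept rather than collapsed.

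The main obstacle is the noncommutative bookkeeping. Matrix-valued $(0,1)$-forms do not anticommute under wedge in the usual simple way, and the $\rho_i$ do not commute with one another. Verifying $\delta\xi_p=\omega_p$ therefore requires showing that the antisymmetrized products telescope exactly as in the scalar case. The first thing to try is to prove a lemma that for any ordering, $\sum_i(-1)^{i-1}\rho_i$ inserted in any slot equals the identity, and $\sum_i(-1)^{i-1}\overline{\partial}\rho_i$ inserted in any slot gives zero. Then I will induct on $d-p$ using only these slot-insertion identities, never commuting $\rho$'s past each other. The sign conventions must also be checked against the orientation chosen in \cref{thm:globalrescoho}.
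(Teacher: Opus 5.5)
Your architecture matches the paper's: explicit zigzag cochains built from antisymmetrized products of $\rho$'s and $\overline{\partial}\rho$'s, the partition identity $\sum_i(-1)^{i-1}\rho_i=I_N$, and reverse induction on $p$. However, the smoothness step you add beyond the paper fails. The cochain $\xi_{\{j\}^0}=\pm\rho_j\omega$ is \emph{not} smooth on $U_{\{j\}^0}=\bigcap_{i\neq j}U_i$. Your concrete identity $Q_i^{-1}\rho_i=(-1)^{i-1}Q_i^HD^{-1}$, with $D=\sum_jQ_jQ_j^H$, shows that $\omega\rho_i$ extends across $D_i$. It says nothing about $\rho_i\omega=(-1)^{i-1}g\,Q_iQ_i^HD^{-1}Q_i^{-1}\prod_{j\neq i}Q_j^{-1}\,d\zz$, where $Q_i^HD^{-1}$ separates $Q_i$ from $Q_i^{-1}$ and does not commute with it. Concretely, let $d=2$, $n_2=1$, $P_2=z_2$, $P_1=\begin{bmatrix} z_1 & 1\\ 0 & 1\end{bmatrix}$, $g=1$; the sole eigenvalue is $\zz^*=0$, and take $U$ the ball of radius $2$. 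At $(\epsilon,1)\in U_2$ with real $\epsilon\neq0$, one has $\rho_1\omega=P_1P_1^H(P_1P_1^H+I)^{-1}P_1^{-1}\,d\zz$. Its first column is $\frac{1}{(3+2\epsilon^2)\epsilon}(1+2\epsilon^2,1)^T$, which is unbounded as $\epsilon\to0$, and $\overline{\partial}\rho_1\wedge\omega$ blows up the same way. So with $\omega$ on the right, as in \cref{eq:omegarep}, neither the cochains nor the glued $\eta_\omega$ are smooth where the construction and the later Stokes arguments need them.

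The paper's proof checks $\overline{\partial}\xi_{p+1}=\omega_p$ and $\delta\xi_{p+1}=\omega_{p+1}$ only as algebraic identities and is silent on this, so you must supply the repair. Put $\omega$ first: $\xi_{\{j\}^0}=\omega\rho_j$, and in general $\omega\wedge\sum_\sigma(-1)^{\text{sgn}(\sigma)}\rho\,\overline{\partial}\rho\wedge\cdots$. This is the ordering of $\eta'_\omega$ in \cref{subsec:etaomega2d}, and the algebra is unchanged up to signs because $\omega$ is a $\overline{\partial}$-closed common left factor. Even then, your third property only covers the first stage. In $\omega\rho_a\overline{\partial}\rho_b\wedge\cdots$ the $Q_b^{-1}$ inside $\omega$ must pass $Q_a^HD^{-1}$ to meet the $Q_b$ at the front of $\overline{\partial}\rho_b$. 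This requires $Q_b^{-1}D^{-1}=(Q_b^HQ_b+\sum_{j\neq b}Q_jQ_j^H)^{-1}Q_b^{-1}$, which follows from $DQ_b=Q_b(Q_b^HQ_b+\sum_{j\neq b}Q_jQ_j^H)$ since $Q_b$ commutes with $Q_jQ_j^H$ for $j\neq b$, together with the analogous identity for $\overline{\partial}D$. These are exactly what produce the $D_{(I)}$ and $S_{(I),\ell}$ of \cref{subsec:triBeyn}.

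Second, your inductive step is still a plan rather than an argument. The slot-insertion identities are tautologies ($\sum_i(-1)^{i-1}\rho_i$ is literally $I_N$ and its $\overline{\partial}$ literally $0$); what actually carries the induction is the cocycle identity $\delta\omega_{p+1}=0$. The paper gets it from the inductive hypothesis, $\delta\omega_{p+1}=\delta\overline{\partial}\xi_{p+2}=\overline{\partial}\delta\xi_{p+2}=\overline{\partial}\omega_{p+2}=0$. It then splits $\xi_{K^0}=\sum_{k\in K}\pm\rho_k\,\omega_{(K\setminus\{k\})^0}$ by the undifferentiated index. In $(\delta\xi_{p+1})_{J^0}$, the terms whose undifferentiated index is the added $\ell\notin J$ give $\sum_{\ell\notin J}(-1)^{\ell-1}\rho_\ell\,\omega_{J^0}$. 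The cocycle identity turns the terms with $k\in J$ into $\sum_{k\in J}(-1)^{k-1}\rho_k\,\omega_{J^0}$, and the partition identity finishes.

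If you instead want to derive the cocycle identity from $\sum_\ell(-1)^{\ell-1}\overline{\partial}\rho_\ell=0$, you need a fact you did not state. The antisymmetrized product $\sum_{\sigma}(-1)^{\text{sgn}(\sigma)}\alpha_{\sigma(1)}\wedge\cdots\wedge\alpha_{\sigma(r)}$ of matrix-valued $1$-forms vanishes when two arguments coincide: the terms pair off under the transposition, even though $\alpha\wedge\alpha\neq0$ in general. Moving the added index to the last slot, the $\delta$-sign and the reordering sign combine to $(-1)^{\ell-1+|J|}$. Substituting $\sum_{\ell\notin J}(-1)^{\ell-1}\overline{\partial}\rho_\ell=-\sum_{j\in J}(-1)^{j-1}\overline{\partial}\rho_j$ then leaves only repeated-argument terms, so $\delta\omega_{p+1}=0$.
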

\begin{proof}
The matrices $\{\rho_i\}$ play the role of a partition of unity.  Indeed,
$$
\sum_{i=1}^d (-1)^{i-1} \rho_i = I_N.
$$
We use this identity to track explicit representatives of the image of $\omega$ through the chain of isomorphisms appearing in the proof of \cref{thm:dolbeault}. For the proof, we can ignore the constant $\left(\frac{1}{2 \pi i} \right)^{d}$, and just track $\omega$.  As in the proof of~\cref{thm:globalrescoho}, representatives of the intermediate cohomology groups are collections of differential forms defined on intersections of the sets of the open cover $\underline U$.

At each stage of the construction, we choose $\xi_{p+1} = \{\xi_{I^0} \in A^{d,d-p-2}(U_{I^0})\}_{|I| = d-p-1} \in C^{p}(\underline{U},A^{d,d-p-2})$ and $\omega_p = \{\omega_{I^0} \in Z^{d,d-p-1}(U_{I^0})\}_{|I| = d-p-1} \in H^{p}(\underline{U},Z^{d,d-p-1})$, where $U_{I^0} = \cap_{j \notin I} U_j$,\footnote{Notice that we switch the convention from the proof of \cref{thm:globalrescoho} to using $U_{I^0} = \cap_{j \notin I} U_j$ instead of $U_{I} = \cap_{i \in I} U_i$, which aligns with~\cite{griffiths1978ag}, and is cleaner for this proof.} satisfying $\delta \xi_p = \omega_p$ and $\overline{\partial} \xi_{p+1} = \omega_{p}$, where $\overline{\partial}$ is the Dolbeault operator and  $\delta$ denotes the sheaf cohomology coboundary operator as defined in \cref{eq:delta}. For $p=d-1,\ldots,0$ we define
\begin{align*}
\xi_{p+1} &= \left\{ \xi_{I^0}, \text{where }  \xi_{I^0} = \sum_{\sigma \in S_{d-p-1}} (-1)^{\text{sgn}(\sigma)}  \rho_{\sigma(i_1)} \overline{\partial} \rho_{\sigma(i_2)} \wedge \cdots \wedge \overline{\partial} \rho_{\sigma(i_{d-p-1})}  \wedge \omega \right\}, \\
\omega_p &= \left\{ \omega_{I^0}, \text{where } \omega_{I^0} = \sum_{\sigma \in S_{d-p-1}} (-1)^{\text{sgn}(\sigma)} \overline{\partial} \rho_{\sigma(i_1)} \wedge \cdots \wedge \overline{\partial} \rho_{\sigma(i_{d-p-1})} \wedge \omega \right\},
\end{align*}
where $I = \{i_1,\ldots,i_{d-p-1}\}$ and $i_1 < \ldots < i_{d-p-1}$, i.e.,  $\xi_{I^0}$ and $\omega_{I^0}$ depend only on $I$,  not on the order of the indices within $I$.  

The identity $\overline{\partial}\xi_{p+1}=\omega_p$ follows immediately from the fact that $\overline{\partial}$ distributes over the sum, the product rule, and the fact that $\overline{\partial} \circ \overline{\partial} = 0$.  It remains to prove $\delta\xi_{p+1}=\omega_{p+1}$.   We proceed by reverse induction on $p$, beginning with $p=d-1$.
With the above definitions,
\begin{align*}
\omega_{d-1} &= \omega \\
\xi_{d-1} &= \{ \xi_{\{i\}^0} = \rho_i \omega \}_{i =1}^d,\\
\omega_{d-2} &= \overline{\partial} \xi_{d-1} = \{ \omega_{\{i\}^0} =  \overline{\partial} \rho_i \wedge \omega  \}_{i =1}^d, \\
\xi_{d-2} &= \{ \xi_{\{i,j\}^0} = \rho_i \overline{\partial} \rho_j \wedge \omega - \rho_j \overline{\partial} \rho_i \wedge \omega \}_{i < j}, \\
 \omega_{d-3} &= \overline{\partial} \xi_{d-2} = \{ \omega_{\{i,j\}^0} = \overline{\partial} \rho_i  \wedge \overline \partial \rho_j \wedge \omega - \overline{\partial} \rho_j  \wedge \overline \partial \rho_i \wedge \omega\}_{i < j}.
\end{align*}
Hence, $\delta \xi_{d-1} = \omega_{d-1}$ by the alternating-sign convention in the definition of $\rho_i$.  A direct calculation using the partition identity for $\rho_i$ and the definition of $\delta$ shows that
\begin{align*}
(\delta \xi_{d-2})_{\{i\}^0} &= \sum_{j < i} (-1)^{j-1} \xi_{\{i,j\}^0} +  \sum_{j > i} (-1)^{j} \xi_{\{i,j\}^0} \\
&= \sum_{j < i} (-1)^{j-1} (-\rho_i \omega_{\{j\}^0} + \rho_j \omega_{\{i\}^0}) + \sum_{j > i} (-1)^{j-1}(-\rho_i \omega_{\{j\}^0} + \rho_j \omega_{\{i\}^0}) \\
&= -\rho_i \left( \sum_{j \neq i} (-1)^{j-1} \omega_{\{j\}^0} \right) + (I_N-(-1)^{i-1} \rho_i) \omega_{\{i\}^0} \\
&= -\rho_i \overline{\partial} \left( \sum_{j \neq i} (-1)^{j-1} \rho_j \omega \right) + (I_N-(-1)^{i-1} \rho_i) \omega_{\{i\}^0} \\
&= \rho_i \overline{\partial} \left( (-1)^{i-1} \rho_i \omega \right) + (I_N-(-1)^{i-1} \rho_i) \omega_{\{i\}^0} \\
&= (-1)^{i-1}\rho_i \omega_{\{i\}^0} + (I_N-(-1)^{i-1}\rho_i) \omega_{\{i\}^0} = \omega_{\{i\}^0},
\end{align*}
so $\delta \xi_{d-2} = \omega_{d-2}$, as desired.

For the general inductive step $\delta \xi_{p+1} = \omega_{p+1}$, we assume that the statement holds for $p' > p+1$. The steps are analogous to the above concrete proof that $\delta \xi_{d-2} = \omega_{d-2}$. Note that we can write
$$
\xi_{p+1} = \left\{ \xi_{I^0} =  \sum_{j=1}^{d-p-1} (-1)^{(j-1)}  \rho_{i_j}  \sum_{\sigma \in S_{d-p-2}} (-1)^{\text{sgn}(\sigma)}  \overline{\partial} \rho_{\sigma(i_2)} \wedge \cdots \wedge \overline{\partial} \rho_{\sigma(i_{d-p-1})} \wedge \omega  \right\}, \\
$$
with $I = \{i_1,\ldots,i_{d-p-1}\}$,
by separating the original sum according to which term is undifferentiated, and then projecting down to $S_{d-p-2}$. Precisely, define $a:\{1,\ldots,d-p-1\} \backslash \{1\} \to \{1,\ldots,d-p-2\}$ and $b:\{1,\ldots,d-p-1\} \backslash \{j\} \to \{1,\ldots,d-p-2\}$ by $a(n) = n-1$ and $b(n) = n$ if $n < j$ and $b(n) = n-1$ if $n > j$. Then any permutation $\sigma \in S_{d-p-1}$ with $\sigma(1) = j$ gives a permutation $b \circ \sigma \circ a^{-1} \in S_{d-p-2}$.
The change in sign follows from the definition of the sign of a permutation as the number of inversions in the set the permutation acts upon, ie. the number of pairs $x <y$ with $\sigma(x) > \sigma(y)$. It is clear that our map removes $j-1$ inversions associated to the elements of $\{1,\ldots,d-p-1\}$ that are less than $j$.

For $J = i_1,\ldots,i_{d-p-2}$, let $\ell_j, j =1,\ldots,p+2$ be the standard ordering of $\{1,\ldots,d\} \setminus J$. Then, using $\chi_{J^0}$ to denote the elements of $\delta \xi_{p+1}$, following the definition of the map $\delta$ and separating according to which term is undifferentiated gives
\begin{align*}
\delta \xi_{p+1} &  \!\!= \!\! \left\{ \chi_{J^0}   \!\!= \!\! \sum_{j = 1}^{p+2} (-1)^{j-1} \xi_{(J \cup \{\ell_j\})^0} \right\} \\
&  \!\!= \!\! \left\{ \chi_{J^0}   \!\!= \!\! \sum_{j = 1}^{p+2} (-1)^{j-1}  \!\!\!\!\!\! \sum_{\sigma \in S_{d-p-1}}  \!\!\!\!\! (-1)^{\text{sgn}(\sigma)} \rho_{\sigma(i_1)} \overline{\partial} \rho_{\sigma(i_2)} \! \wedge \! \cdots \! \wedge \! \overline{\partial} \rho_{\sigma(\ell_j)} \! \wedge \! \cdots \! \wedge \! \overline{\partial} \rho_{\sigma(i_{d-p-1})} \! \wedge \! \omega \right\} \\
&  \!\!= \!\! \left\{ \! \chi_{J^0}   \!\!= \!\! \sum_{j = 1}^{p+2} (-1)^{j-1} \! \sum_{k  = 1}^{d-p-1} \! (-1)^{(k-1)} \rho_{i_k}  \!\!\!\!\! \sum_{\sigma \in S_{d-p-2}}  \!\! \!\! (-1)^{\text{sgn}(\sigma)}  \overline{\partial} \rho_{\sigma(i_2)} \! \wedge \! \cdots \! \wedge \! \overline{\partial} \rho_{\sigma(i_{d-p-1})} \! \wedge \! \omega \! \right\} \\
&  \!\!= \!\! \left\{ \! \chi_{J^0}   \!\!= \!\! \sum_{j = 1}^{p+2} (-1)^{\ell_j-1} \rho_{\ell_j}  \! \!\! \left( \! \sum_{\sigma \in S_{d-p-2}}  \!\! \!\!\!\! (-1)^{\text{sgn}(\sigma)} \overline{\partial} \rho_{\sigma(i_2)} \! \wedge \! \cdots \! \wedge \! \overline{\partial} \rho_{\sigma(\ell_j)} \! \wedge \! \cdots \! \wedge \! \overline{\partial} \rho_{\sigma(i_{d-p-1})} \! \wedge \! \omega \right. \right. \\
&+ \left. \left. \sum_{j = 1}^{p+2} \sum_{\substack{k  = 1 \\ i_k \neq \ell_j}}^{d-p-1} (-1)^{j+k} \rho_{i_k}  \!\!\!\!\!\! \sum_{\sigma \in S_{d-p-2}}  \!\!\!\!\!(-1)^{\text{sgn}(\sigma)} \overline{\partial} \rho_{\sigma(i_2)} \! \wedge \! \cdots \! \wedge \! \overline{\partial} \rho_{\sigma(\ell_j)} \! \wedge \! \cdots \! \wedge \!\overline{\partial} \rho_{\sigma(i_{d-p-1})} \! \wedge \! \omega \!\! \right) \!\!\! \right\},
\end{align*}
where in the last step we have separated the terms with $i_k = \ell_j$ from the rest. The sign on $\rho_{\ell_j}$ comes from the fact that $j$ is the position of $\ell_j$ in $\{1,\ldots,d\} \setminus J$ and $k$ is the position of $\ell_j$ in $J \cup \{\ell_j\}$, whereas $\ell_j$ is the position in the full set $\{1,\ldots,d\}$, so $j+k=\ell_j+1$.
Focusing only on the second sum, and simplifying further, we have
\begin{align*}
& \sum_{j=1}^{p+2} \sum_{\substack{k = 1 \\ i_k \neq \ell_j}}^{d-p-1} (-1)^{j+k} \rho_{i_k} \sum_{\sigma \in S_{d-p-2} } (-1)^{\text{sgn}(\sigma)} \overline{\partial} \rho_{\sigma(i_2)} \wedge \cdots \wedge \overline{\partial} \rho_{\sigma(\ell_j)} \wedge \cdots \wedge \overline{\partial} \rho_{\sigma(i_{d-p-1})} \wedge \omega \\
&=  \sum_{k=1}^{d-p-1}  \rho_{i_k} \sum_{\substack{j=1 \\ \ell_j \neq i_k}}^{p+2} (-1)^{j+k}  \omega_{((J \setminus \{i_k\}) \cup \{\ell_j\} )^0}
\end{align*}
By the definition of $\delta$,
\begin{align*}
(\delta \omega_{p+1})_{(J \setminus \{i_k\})^0} \!\! = \!\! (-1)^{(i_k,(J \setminus \{i_k\})^0)}\omega_{((J \setminus \{i_k\}) \cup \{i_k\})^0} \!\! + \!\! \sum_{\substack{j=1 \\ \ell_j \neq i_k}}^{p+2} (-1)^{(\ell_j,J \setminus \{i_k\})^0 }  \omega_{((J \setminus \{i_k\}) \cup \{\ell_j\} )^0},
\end{align*}
where $(i_k,(J \setminus \{i_k\})^0)$ is the $0$-indexed position of $i_k$ in $\{1,\ldots d\} \setminus (J \setminus \{i_k\})$ and similarly for $(\ell_j,(J \setminus \{i_k\})^0)$. But, by the inductive hypothesis and the fact that $\delta$ and $\overline{\partial}$ commute, $\delta \omega_{p+1} = \delta \overline{\partial} \xi_{p+2} = \overline{\partial} \delta \xi_{p+2} =  \overline{\partial} \omega_{p+2} = 0$, so
\begin{align*}
\omega_{((J \setminus \{i_k\}) \cup \{i_k\})^0} &= \sum_{\substack{j=1 \\ \ell_j \neq i_k}}^{p+2} (-1)^{(\ell_j,(J \setminus \{i_k\})^0 + (i_k,(J \setminus \{i_k\})^0)-1}  \omega_{((J \setminus \{i_k\}) \cup \{\ell_j\} )^0} \\
&= \sum_{\substack{j=1 \\ \ell_j \neq i_k}}^{p+2} (-1)^{j+k+i_k+1}  \omega_{((J \setminus \{i_k\}) \cup \{\ell_j\} )^0}.
\end{align*}
Similarly to the analysis for the sign on $\rho_{\ell_j}$ above, we find the sign on $\omega_{((J \setminus \{i_k\}) \cup \{\ell_j\} )^0}$ by first noting that $j$ is the position of $\ell_j$ in $\{1,\ldots,d \} \setminus J$ and $k$ is the position of $i_k$ in $J \cup \{\ell_j\}$, with the indices starting at $1$. Suppose that $\ell_j < i_k$. Then $(\ell_j,(J \setminus\{i_k\})^0) = j+1$ and $(i_k,(J \setminus \{i_k\})^0) = i_k - k + 1$. Now suppose that $\ell_j > i_k$. Then $(\ell_j,(J \setminus\{i_k\})^0) = j+2$ and $(i_k,(J \setminus \{i_k\})^0) = i_k - k$. In either case, the sign becomes $(-1)^{j+k+i_k+1}$ with simple equivalences mod $2$.
Thus
\begin{align*}
&  \sum_{k=1}^{d-p-1}   \rho_{i_k} \sum_{\substack{j=1 \\ \ell_j \neq i_k}}^{p+2} (-1)^{j+k}  \omega_{((J \setminus \{i_k\}) \cup \{\ell_j\} )^0} \\
&=  \sum_{k=1}^{d-p-1}  (-1)^{i_k-1} \rho_{i_k} \omega_{J^0}
\end{align*}
Therefore
\begin{align*} 
\delta \xi_{p+1} &=
\left\{ \chi_{J^0} = \sum_{\ell_j \notin J} (-1)^{\ell_j-1} \rho_{\ell_j} \omega_{J^0} + \sum_{i_j \in J} (-1)^{i_j-1} \rho_{i_j} \omega_{J^0} \right\} \\
&= \left\{ \chi_{J^0}, \text{with } \chi_{J^0} =  \omega_{J^0} \right\} = \omega_{p+1}.
\end{align*}
This completes the inductive step. Therefore, 
$$
\eta_{\omega} \! = \! \left( \frac{1}{2 \pi i} \right)^d \!\! \omega_0 \! = \! \left\{\!\omega_{\{k\}} \!\! = \!\! \left( \frac{1}{2 \pi i} \right)^d \!\!\!\!\! \sum_{\sigma \in S_{d-1}} \!\!\!\! (-1)^{\text{sgn}(\sigma)} \overline{\partial} \rho_{\sigma(1)} \wedge \! \cdots \! \wedge \widehat{\overline{\partial} \rho_{\sigma(k)}} \wedge \! \cdots \! \wedge \overline{\partial} \rho_{\sigma(d)} \wedge \omega \!\right\}_{k=1}^d. 
$$
\end{proof}
The scalar version of this result in~\cite{griffiths1978ag} is considerably simpler.  In the scalar setting the coefficients of the differential forms commute,  so an expansion over the symmetric group is unnecessary.  Moreover, additional simplifications follow from the alternating property of scalar differential forms~\cite[p. ~654]{griffiths1978ag}, which are no longer available in the matrix-valued setting, since the wedge product of matrix-valued differential forms is not alternating. Nevertheless, although the expression for $\eta_{\omega}$ appears complicated, it already provides a concrete object that can be evaluated in practice. 

We have written $\eta_{\omega}$ as an element of $H^0(\underline{U}, Z^{d,d-1})$,  so it appears at first as a collection of $d$ differential forms, one on each element of the open cover $\underline{U}$. However, since $\delta \eta_{\omega} = 0$, the forms agree on overlaps: $\omega_{\{k\}} = \omega_{\{j\}}$ on $U_j \cap U_k$ for all $j,k$.  By the gluing property of sheaves, these local forms therefore define a single global differential form on $U^*$,  agreeing with $\omega_{\{k\}}$ on $U_k$, which we also denote $\eta_{\omega}$.  In practice, it is sufficient to evaluate any one of these expressions $\omega_{\{k\}}$, as all choices give the same formula. In \cref{sec:details} we further
simplify this expression and derive an explicit two-dimensional formula for $\eta_{\omega}$ that is symmetric with respect to the variables and the functions $P_i$.

The existence of a form that integrates over the boundary of a
$d$-dimensional complex region allows the residue construction to be globalized. In particular, we obtain an integral formula that can
capture several eigenvalues simultaneously. We now state the global residue theorem for matrix functions, generalizing
~\cite[p.~656]{griffiths1978ag}.

\begin{theorem} \label{thm:globalres}
Suppose that all eigenvalues $\zz^{(1)},\ldots,\zz^{(m)}$ of the MEP $P = \{P_i,1 \leq i \leq d\}$ in \cref{eq:mepform} lying in some compact region $C\subset\C^d$ are simple, with $C$ having smooth boundary. Let $g$ and $\omega$ be as in \cref{thm:globalrescoho}, and let
$
\eta_{\omega}
$
be the representative in \cref{thm:omegarep}. Then
\begin{equation} \label{eq:globalres}
	\int_{\partial C} \eta_{\omega} = \sum_{i=1}^{m} \operatorname{Res}_P(g,\zz^{(i)})
\end{equation}
\end{theorem}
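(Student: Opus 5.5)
The plan is to globalize \cref{thm:globalrescoho} with Stokes' theorem, using the explicit representative of \cref{thm:omegarep}. That representative is defined pointwise on the complement of the eigenvalue set, with no reference to any particular neighborhood. The main points are therefore that $\eta_\omega$ is a single closed $(d,d-1)$-form on $C\setminus\{\zz^{(1)},\ldots,\zz^{(m)}\}$, and that its integral around each eigenvalue equals the local residue.

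First, I will note that the formula \cref{eq:omegarep} makes sense on all of $C^* = C\setminus\{\zz^{(1)},\ldots,\zz^{(m)}\}$.
\begin{itemize}
\item The matrix $\sum_j Q_jQ_j^H$ is invertible at $\zz$ exactly when there is no common null vector of all $Q_j^H$. For Kronecker-structured commuting $Q_j$ this fails only when every $P_i(\zz)$ is singular, i.e.\ at an eigenvalue. So the $\rho_i$ are smooth on $C^*$.
\item Each local expression $\omega_{\{k\}}$ is defined on $U_k = \{\textrm{det}(P_k)\neq 0\}$, where $\omega$ is also defined.
\item The sets $\{U_k\}$ cover $C^*$, and the gluing argument given after \cref{thm:omegarep} ($\delta\eta_\omega=0$) is purely algebraic. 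It therefore holds on the cover of $C^*$ by $\{U_k\cap C^*\}$ and yields a global smooth form $\eta_\omega$ there.
\item Closedness: $\eta_\omega=\overline\partial\xi$ locally, by the construction in \cref{thm:omegarep}, and $d=\overline\partial$ on $(d,\cdot)$-forms. Hence $d\eta_\omega = \overline\partial\,\omega_{\{k\}} = 0$ on each $U_k$, so $d\eta_\omega=0$ on $C^*$.
\end{itemize}

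Second, I will choose small disjoint neighborhoods $U^{(i)}\subset \mathrm{int}\, C$ of each $\zz^{(i)}$. Each should be relatively compact with smooth boundary (e.g.\ balls $B_\epsilon(\zz^{(i)})$) and contain no other eigenvalue. Applying Stokes' theorem to the closed form $\eta_\omega$ on $C\setminus\bigcup_i U^{(i)}$ gives
\[
\int_{\partial C}\eta_\omega=\sum_{i=1}^m\int_{\partial U^{(i)}}\eta_\omega ,
\]
with the boundary spheres taking the induced outward orientation. Then \cref{thm:globalrescoho}, applied on each $U^{(i)}$, identifies $\int_{\partial U^{(i)}}\eta_\omega$ with $(2\pi i)^{-d}\int_{\Gamma^{(i)}}\omega=\operatorname{Res}_P(g,\zz^{(i)})$. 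Here I must check one thing: the representative produced in \cref{thm:omegarep} on $U^{(i)}\setminus\{\zz^{(i)}\}$ is exactly the restriction of the global form. This holds because the formula only involves the $P_j$ and $g$ pointwise. By \cref{thm:simplematrixgrothres}, each local residue is given explicitly, though that is not needed for \cref{eq:globalres}.

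The main obstacle I expect is the first step: verifying that $\sum_j Q_jQ_j^H$ is invertible away from eigenvalues. If $x$ satisfies $Q_j^Hx=0$ for all $j$, then $x$ lies in $\bigcap_j \ker(I\otimes\cdots\otimes P_j^H\otimes\cdots\otimes I)$. This intersection equals $\ker P_1^H\otimes\cdots\otimes\ker P_d^H$, which is nonzero only if every $P_j$ is singular. I will prove the intersection identity by induction on $d$, using that the kernel of $A\otimes I$ is $\ker A\otimes\C^{n}$ together with the identity $(V_1\otimes W)\cap(X\otimes W_2)=(V_1\cap X)\otimes\ldots$ for tensor subspaces. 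A secondary point is that $C$ may meet eigenvalues on $\partial C$; I will assume, as the statement implicitly does, that no eigenvalue lies on $\partial C$, so that $\eta_\omega$ is smooth there.
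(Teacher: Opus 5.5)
Your overall route is the paper's: excise small balls around the $\zz^{(i)}$, apply Stokes' theorem to the closed form $\eta_\omega$ on what remains of $C$, and identify each sphere integral with $\operatorname{Res}_P(g,\zz^{(i)})$ via \cref{thm:globalrescoho,thm:omegarep}. The gap is in your first step, where you try to show yourself that $\eta_\omega$ is a smooth closed form on $C\setminus\{\zz^{(1)},\ldots,\zz^{(m)}\}$.

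You write that $\omega_{\{k\}}$ is defined on $U_k$ ``where $\omega$ is also defined.'' That is false. The form $\omega=g(Q_1\cdots Q_d)^{-1}d\zz$ contains every $Q_j^{-1}$, so it is defined only on $U_1\cap\cdots\cap U_d$. Invertibility of $D=\sum_j Q_jQ_j^H$, which you name as the main obstacle, only makes the $\rho_j$ smooth. It does nothing about the poles of $\omega$ along $\{\textrm{det}(P_j)=0\}\cap U_k$ for $j\neq k$. To extend $\omega_{\{k\}}$ across those hypersurfaces, the factor $Q_j^{-1}$ in $\omega$ must cancel against the leading factor $Q_j$ of $\rho_j=\pm Q_jQ_j^HD^{-1}$. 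That cancellation is the real content needed. The paper's proof simply takes it from \cref{thm:omegarep}; your pointwise argument does not supply it.

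Because matrix factors do not commute, this cancellation depends on where $\omega$ sits in the product, so ``the formula only involves the $P_j$ pointwise'' cannot settle it. Here is an example.
\begin{itemize}
\item Take $d=2$, $P_1(\zz)=\begin{bmatrix}1&1\\0&z_1\end{bmatrix}$, $P_2(\zz)=z_2$ and $g=1$. Then $Q_1=P_1$, $Q_2=z_2I_2$, and the origin is a simple eigenvalue.
\item Since $\overline{\partial}\rho_1=\overline{\partial}\rho_2=-\overline{\partial}(|z_2|^2D^{-1})$, the $d\overline{z}_1\wedge dz_1\wedge dz_2$ coefficient of $\overline{\partial}\rho_1\wedge\omega$ (the order displayed in \cref{eq:omegarep}) is $\overline{z}_2D^{-1}(\partial_{\overline{z}_1}D)D^{-1}P_1^{-1}$.
\item At $z_2=1$, the $(1,2)$ entry of this coefficient is $(z_1+3)/\bigl(z_1(3+2|z_1|^2)^2\bigr)$, which is unbounded as $z_1\to0$. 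So this expression does not extend smoothly to $U_2$.
\item By contrast, $\omega\wedge\overline{\partial}\rho_1$ has matrix coefficient $z_2^{-1}\,\overline{\partial}(P_1^HD^{-1})$, which is smooth on $U_2$. This is the ordering the paper actually uses in \cref{subsec:etaomega2d}.
\end{itemize}

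So you must check regularity and closedness of the integrand on $C$ minus the eigenvalues for the specific representative you integrate, by exhibiting this cancellation. It cannot be inferred from invertibility of $D$. Once that is in place (for $d=2$, for example, with $\omega$ to the left of $\overline{\partial}\rho_1$), the rest of your argument goes through as written: the Stokes step, the restriction to each ball, and the appeal to \cref{thm:globalrescoho}.
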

\begin{proof}
The argument follows the same reasoning as in~\cite[p. ~656]{griffiths1978ag}.  Let $\epsilon>0$ be sufficiently small so that the balls $B_{\epsilon}(\zz^{(i)})$ are disjoint and contained in $C$.  Applying Stokes' theorem to the region 
$
C \setminus \bigcup_{i=1}^{m} B_{\epsilon}(\zz^{(i)}),
$
and using that $d\eta_{\omega}=0$, we obtain
$$
\int_{\partial C} \eta_{\omega} - \sum_{i=1}^{m} \int_{\partial B_{\epsilon}(\zz^{(i)})} \eta_{\omega} = \int_{C \backslash (\cup_{i=1}^{m} B_{\epsilon}(\zz^{(i)}))} d \eta_{\omega} = 0,
$$
Hence, 
\[
\int_{\partial C} \eta_{\omega}
=
\sum_{i=1}^{m}
\int_{\partial B_{\epsilon}(\zz^{(i)})} \eta_{\omega}.
\]
Each boundary integral equals the local residue by 
\cref{thm:globalrescoho,thm:omegarep}, which yields
\begin{align*}
\int_{\partial C} \eta_{\omega} &= \sum_{i=1}^{m} \int_{\partial B_{\epsilon}(\zz^{(i)})} \eta_{\omega} \\
&= \sum_{i=1}^{m} \operatorname{Res}_P(g,\zz^{(i)}).
\end{align*}
\end{proof}

\Cref{thm:globalres}  completes the theoretical foundation for our contour integral formulation. We can now calculate the sum of residues over an arbitrary number of eigenvalues inside any region $C$. In particular, we no longer require any knowledge of the eigenvalues in order to calculate the residue. Combined with \cref{thm:simplematrixgrothres}, this allows us to convert a boundary integral over $\partial C$ into a finite sum of rank one contributions that are directly tied to the eigenvalues.  In \cref{sec:alg}, we prove that if we calculate the residue for sufficiently many distinct functions $g$, we can find the eigenvalues. This is analogous to both univariate contour eigensolvers such as Beyn's method and to scalar contour solvers.

This completes our extension of the basic elements of the scalar Grothendieck residue theory from~\cite{griffiths1978ag} to the matrix
multiparameter setting. In particular, we have developed both a local
and a global residue theory sufficient to support the contour
integral method introduced in this paper. Our construction also
generalizes the residue used in the scalar contour methods of
\cite{kravanja1998contour,kravanja2000contour}, although our approach
does not proceed through a Bochner--Martinelli type formula.
Consequently, our method may be viewed as a matrix extension
of the scalar contour techniques developed in
\cite{kravanja1998contour,kravanja2000contour}.

From the perspective of numerical linear algebra, our results also
extend the contour-integral identity underlying univariate contour eigensolvers. Consequently, the framework developed here can be viewed as a natural multiparameter extension of Beyn's method~\cite{asakura2009beyn,beyn2012beyn,yokota2013beyn}.
However, we do not attempt to generalize Keldysh's theorem
(\cref{thm:Keldysh}; see also~\cite[Thm.~1.6.5]{mennicken2003keldysh}).
Such a generalization is likely unrelated to the residue formula
required for a contour solver.
Keldysh's theorem describes expansions in a neighborhood of a pole, which in one variable is directly related to a residue at that pole. In several complex variables the singular sets of analytic functions are typically higher-dimensional hypersurfaces rather than isolated poles, while residues remain associated with isolated points. Consequently, the natural multivariate analogue of a pole expansion is more closely related to the Weierstrass preparation theorem, even in the scalar case. Some matrix-valued extensions of Weierstrass
preparation have been studied~\cite{dencker1993preparation,balicki2026parametric}, but these results do not directly connect with the residue framework developed here. Accordingly, there is little reason to expect that a multidimensional
contour solver should arise from a generalization of Keldysh's theorem.

\section{MultiBeyn: A Multiparameter Contour Method}
\label{sec:alg}

We now present a multidimensional contour algorithm for multiparameter eigenvalue problems (MEPs), which we call multiBeyn as it generalizes Beyn's method~\cite{asakura2009beyn,beyn2012beyn,yokota2013beyn} to several complex variables. Throughout this section we consider an MEP with $d$ analytic matrix functions in $d$ variables. Implementation details for the practically important two-parameter case are given in \cref{sec:details}.  We assume throughout that we have some reasonable quadrature rule for calculating the contour integrals.  In \cref{subsec:quad}, we describe an efficient quadrature rule for the two-parameter case by simplifying~\cref{eq:omegarep}. 

Our core idea is to combine the global residue formula from \cref{sec:theory} with moment-based contour techniques, yielding a reduced eigenvalue problem whose size is proportional to the number of eigenvalues inside the region of interest (see~\cref{subsec:mom}).  At a high level,  the method proceeds by integrating the matrix-valued differential form $\eta_{\omega}$ over the boundary $\partial C$ to extract spectral information about the MEP inside $C$.  By weighting this integral with powers of a selected coordinate (here $z_d$), we obtain a sequence of moments that encode the eigenvalues in that coordinate.  These moments are assembled into a Hankel matrix pencil whose eigenvalues coincide with the desired $z_d$-coordinates. The remaining coordinates are then recovered by solving lower-dimensional MEPs obtained by fixing previously computed components. 

\begin{algorithm}
\textbf{multiBeyn: Contour method for analytic MEPs.}

\hrule

\vspace{.1cm}

\textbf{Input:} MEP in \cref{eq:mepform}, compact region $C \subset \C^d$ with smooth boundary, sketching matrices $\hat{V},\hat{W} \in \C^{N \times k}$ and moment matrix size $M$ with $kM \ge m$, where $m$ is the number of eigenvalues in $C$.
\begin{algorithmic}[1]
\State Calculate $\hat{V}^H \left( \int_{\partial C} \eta_{\omega(z_d^i)}(\zz) \right) \hat{W}$ for $i=0,\ldots,2M-1$.\footnotemark
\State Form the Hankel eigenvalue problem in \cref{eq:Hankel}.
\State Solve the Hankel eigenvalue problem using QZ for $z_d^*$.
\State Recover the remaining coordinates by fixing $z_d$ at computed values,  and repeating steps $1$-$3$ on MEPs with dimension $d-1$.
\end{algorithmic}
\textbf{Output:} All eigenvalues $(z_1^*,\ldots,z_d^*)$ of \cref{eq:mepform} contained in $C$.
\label{alg:mpe}
\end{algorithm}

We first describe how to compute the $d$-th coordinate $z_d$.  Given the residue formula in \cref{sec:theory}, this reduces to a univariate contour problem. Consequently, much of the algorithm parallels standard contour eigensolvers, and inherits the same design tradeoffs.  A direct analogue of Beyn's method~\cite{beyn2012beyn} computes 
\begin{equation} \label{eq:naivemethod}
A_i = \int_{\partial C} \eta_{\omega(z_d^i)}(\zz), \quad i = 0,1,
\end{equation}
and then solves the linear eigenvalue problem $(A_1 - z_d A_0) \vv = 0$.   The differential form $\eta_{\omega}$ in \cref{eq:omegarep} consists of sums of products of noncommuting matrix factors and commuting differential forms $dz_j$ and $d\overline{z}_j$. This structure allows the boundary integral over $\partial C$ to be evaluated by standard multidimensional quadrature. In \cref{subsec:quad} we derive explicit quadrature formulas for the two-parameter case.  As in the univariate setting~\cite{guttel2017nlevp,imakura2016survey}, two key design choices determine the efficiency of the method: (1) the use of sketching to reduce the problem dimension and (2) the number of moments used in the Hankel construction.

\footnotetext{Recall that $\omega$ depends on a scalar function $g$; $\eta_{\omega(z_d^i)}(\zz)$ denotes $\eta_{\omega}$ with $g = z_d^i$.}

\subsection{Sketching}

Sketching is essential for making the contour integral computationally tractable. 
A direct evaluation of the moments
$
A_i = \int_{\partial C} \eta_{\omega(z_d^i)}(\zz)
$
requires manipulating matrices of size 
$
N = \prod_{i=1}^d n_i,
$
which is prohibitively expensive even for moderate $d$.  To avoid this, we work with a projected version of the moment matrices. 
Let $\hat V,\hat W \in \C^{N \times k}$ with $k \ll N$. We define the sketched residue
\begin{equation} \label{eq:lessnaivemethod}
\mu_i = \hat V^H \left( \int_{\partial C} \eta_{\omega(z_d^i)}(\zz) \right) \hat W,
\quad i \ge 0.
\end{equation}
These are $k \times k$ matrices that encode the same spectral information, provided the subspaces spanned by $\hat V$ and $\hat W$ capture the eigenstructure inside $C$.  More precisely, let $m$ be the number of eigenvalues in $C$, and let $V,W \in \C^{N \times m}$ denote the matrices of right and left eigenvectors (in Kronecker form). If
$
\operatorname{rank}(\hat V^H V) = m
$
and 
$
\operatorname{rank}(W^H \hat W) = m,
$
then the projected pencil constructed from $\{\mu_i\}$ yields exactly the eigenvalues inside $C$. In practice, these conditions are satisfied with high probability by choosing $\hat V$ and $\hat W$ as random matrices with $k \gtrsim m$.

This projection yields two key advantages: (1) The resulting eigenvalue problem has dimension $\mathcal{O}(m)$ rather than $N$ and (2) The contour integral can be evaluated without forming $\eta_{\omega}$ explicitly.  Indeed, at each quadrature point the integrand reduces to quantities of the form
$
\hat V^H \eta_{\omega}(\zz) \hat W,
$
which can be evaluated by applying structured operators to a small number of vectors. In particular, one only needs to compute expressions of the form
$
\mathbf{v}^H \eta_{\omega}(\zz) \mathbf{w},
$
which decompose into structured matrix products and linear solves involving the original matrices $P_i(\zz)$. This avoids the explicit construction of the Kronecker matrices $Q_i$ entirely.  The resulting structure is especially favorable in low dimensions. In \cref{sec:details} we show that, in the bivariate case, this leads to a substantial reduction in both computational cost and memory usage.

\subsection{Moment-Based Method for the Last Coordinate of the Eigenvalues}
\label{subsec:mom}

The projected residue \eqref{eq:lessnaivemethod} can be used more effectively by computing a sequence of moments (as in~\cite{sakurai2003moments}).  This leads to a structured eigenvalue problem whose size depends on the number of eigenvalues in $C$, while reducing the number of expensive contour evaluations.  Let $M \in \mathbb{N}$, which will be the number of $k \times k$ blocks in our moment matrices in \cref{eq:Hankel}. We define the sketched moments
\begin{equation}
\mu_i = \hat V^H \left( \int_{\partial C} \eta_{\omega(z_d^i)}(\zz) \right) \hat W,
\quad i = 0,1,\ldots,2M-1.
\end{equation}
As $M$ increases,  one can select a smaller sketching dimension $k$, reducing the number of linear solves per quadrature point as well as improving the robustness of the eigensolver when the number of eigenvalues $m$ is large. We find that the additional cost of increasing $M$ is negligible,  since higher moments are obtained by multiplying previously computed quantities by $z_d$ (a similar phenomenon happens for DAEs~\cite{bai2002dae,freund2003dae,grimme1997dae}).  From the moments $\{\mu_i\}$ we form $(Mk)\times(Mk)$ block Hankel matrices given by 
\begin{equation}  \label{eq:Hankel}
H_{M} =
\begin{bmatrix}
\mu_0 & \mu_1 & \cdots & \mu_{M-1} \\
\mu_1 & \mu_2 & \cdots & \mu_{M} \\
\vdots & \vdots & \ddots & \vdots \\
\mu_{M-1} & \mu_{M} & \cdots & \mu_{2M-2}
\end{bmatrix}, \qquad
H_{M}^{<} =
\begin{bmatrix}
\mu_1 & \mu_2 & \cdots & \mu_{M} \\
\mu_2 & \mu_3 & \cdots & \mu_{M+1} \\
\vdots & \vdots & \ddots & \vdots \\
\mu_{M} & \mu_{M+1} & \cdots & \mu_{2M-1}
\end{bmatrix}.
\end{equation}
We then consider the generalized eigenvalue problem
$
(H_M^{<} - z_d H_M)\mathbf{y} = 0.
$

We now show that this problem recovers the desired eigenvalues. 
Let $m$ be the number of eigenvalues in $C$, and let
$
V = [\vv^{(1)} \cdots \vv^{(m)}]\in \C^{N \times m}
$
and 
$
W = [\ww^{(1)} \cdots \ww^{(m)}] \in \C^{N \times m}
$
be the matrices of right and left eigenvectors,  where each $\vv^{(i)}$ and $\ww^{(i)}$ is a Kronecker product as in \cref{sec:theory}.   Let
$
\Lambda = \mathrm{diag}(z_d^{(1)},\ldots,z_d^{(m)})
$
contain the $z_d$-coordinates of the eigenvalues in $C$, which may not all be distinct.  By \cref{sec:theory},  the moments admit the factorization
\[
\mu_i = \hat{V}^H V \Lambda^i W^H \hat{W}.
\]

Define the block Vandermonde matrices
\[
\mathcal{V} =
\begin{bmatrix}
\hat{V}^H V \\
\hat{V}^H V \Lambda \\
\vdots \\
\hat{V}^H V \Lambda^{M-1}
\end{bmatrix}
\in \C^{Mk \times m},
\quad
\mathcal{W} =
\begin{bmatrix}
W^H \hat{W} &
\Lambda W^H \hat{W} &
\cdots &
\Lambda^{M-1} W^H \hat{W}
\end{bmatrix}
\in \C^{m \times Mk}.
\]
Then, we have 
$
H_M = \mathcal{V}\mathcal{W},
$
and
$
H_M^{<} = \mathcal{V} \Lambda \mathcal{W}.
$
It follows that
$
H_M^{<} - z_d H_M = \mathcal{V}(\Lambda - z_d I)\mathcal{W},
$
and hence,  the generalized eigenvalues are precisely $z_d^{(1)},\ldots,z_d^{(m)}$ (up to possible singularities from overparameterization).
In practice, $H_M$ is rank-deficient with rank $m \ll Mk$. We therefore compute a truncated SVD, i.e., 
$
H_M = V_0 \Sigma_0 W_0^H,
$
where $\Sigma_0 \in \mathbb{R}^{m \times m}$ contains the nonzero singular values (above a chosen tolerance). Projecting yields the reduced eigenvalue problem
\begin{equation} \label{eq:finalevp}
V_0^H H_M^{<} W_0 \mathbf{y}
=
z_d \, V_0^H H_M W_0 \mathbf{y},
\end{equation}
which is of size $m \times m$.  Since
$
V_0^H H_M^{<} W_0 - z_d V_0^H H_M W_0
=
V_0^H \mathcal{V} (\Lambda - z_d I)\mathcal{W} W_0,
$
the eigenvalues of \eqref{eq:finalevp} are exactly the $d$-th coordinates of the eigenvalues inside $C$.

\subsection{Computing the Other Coordinates of the Eigenvalues}
Once the $z_d$-coordinates have been computed, the remaining coordinates $z_1,\ldots,z_{d-1}$ can be recovered in several ways.   One approach, analogous to methods in algebraic geometry and scalar contour techniques~\cite{graf2025pmep,kravanja1998contour,kravanja2000contour}, is to augment the moment construction.  For example, one may compute mixed moments of the form $z_j z_d^i$ for $j=1,\ldots,d-1$, and then solve an additional eigenvalue problem together with a matching step to pair the coordinates correctly.  While this approach is viable,  it introduces additional algebraic overhead.

A more direct alternative is to exploit the structure of the multiparameter problem.  Given a computed value $z_d^*$, we substitute $z_d = z_d^*$ into the functions $P_1,\ldots,P_d$ and obtain a reduced $(d-1)$-parameter eigenvalue problem. The multiBeyn procedure can then be applied recursively to this lower-dimensional problem.   In practice, it is not necessary to consider all $(d-1)$-tuples of the functions $P_i$.  Any eigenvalue that is shared across a collection of subsets whose union contains all indices $\{1,\ldots,d\}$ must be a solution of the full system. Consequently, only a small number of such subsets are required.  Empirically, using two randomly chosen subsets is sufficient to recover the remaining coordinates, and in general no more than $d$ subsets are needed per coordinate.

The computational cost of these subproblems is negligible. Each recursive step reduces both the dimension of the parameter space and the size of the matrices involved. For example, in the two-parameter case, the initial contour computation requires orders of magnitude more quadrature points than the subsequent one-dimensional problems, while also involving larger matrices. As a result, the overall cost is dominated by the first (highest-dimensional) solve.  In our experiments, the recursive subproblem approach is consistently the most robust, and it is therefore the strategy adopted in our implementation.

\section{Implementation Details of multiBeyn for $d =2$}
\label{sec:details}

To demonstrate the practical performance of multiBeyn, we implement a two-parameter version, which we call biBeyn~\cite{graf2026beyn}. In this section we describe the key ingredients required for an efficient implementation. Numerical results are presented in \cref{sec:num}.  Throughout this section we assume the MEP (see~\cref{eq:mepform}) is given by two analytic matrix functions $P_1 \in \Omega(\C^2,\C^{n_1 \times n_1})$ and $P_2 \in \Omega(\C^2,\C^{n_2 \times n_2})$.

\subsection{Simplifying $\eta_{\omega}$ in Two Variables}
\label{subsec:etaomega2d}

The main objective is to obtain a representation of $\eta_{\omega}$\footnote{We focus on simplifying $\eta_{\omega}$ for $g=1$. It is then trivial to multiply by whatever scalar $g$ we want to use.} that can be evaluated efficiently at quadrature points. As emphasized in \cref{sec:theory},  a practical implementation must avoid explicitly forming the Kronecker matrices $Q_i$, which are of size $N = n_1 n_2$.

Recall from \cref{thm:omegarep} that $\eta_{\omega}$ can be written locally as
$
\eta_{\{1\}} = \overline{\partial} \rho_{2}  \wedge \omega$ and  $ 
\eta_{\{2\}} = \overline{\partial} \rho_{1} \wedge \omega,
$
with
$
\omega(P_1,P_2) = (Q_1Q_2)^{-1} dz_1 dz_2.
$
Since these representations agree wherever both $P_1$ and $P_2$ are nonsingular, we are free to choose either one; we focus on simplifying $\eta_{\{2\}}$.
To expose the computational structure, it is convenient to work instead with the form $\eta_{\omega}'$, with representatives $\eta_{\{2\}}' = \omega \wedge \overline{\partial} \rho_1$ and $\eta_{\{1\}}' = \omega \wedge \overline{\partial} \rho_2$. This has the effect of distributing inverse factors more evenly across the expression, leading to a more numerically favorable representation. 

To confirm that this yields the same integral formula, we use the coboundary operator $\delta$ and the punctured domain $U^*$ from \cref{sec:theory}. Let $\xi_1 = \{ \xi_{\{i\}^0} = \rho_i \omega\}_{i=1}^2$ and $\xi_1' = \{ \xi_{\{i\}^0}' =  \omega \rho_i\}_{i=1}^2$.
Note that $\delta (\xi_1 - \xi_1') = \omega - \omega = 0$, so $\xi_1-\xi_1'$ has a globally defined representative in $H^0(A^{2,0}(U^*,\C^{N \times N})) = A^{2,0}(U^*,\C^{N \times N})$. And
$\overline{\partial} (\xi_1-\xi_1') = \eta_{\omega} - \eta_{\omega}'$, 
so 
$\eta_{\omega} = \eta_{\omega}'$ 
in the Dolbeault cohomology group 
$H^{d,d-1}(U^*,\C^{N \times N})= \frac{Z^{2,1}(U^*,\C^{N \times N})}{\overline{\partial} A^{2,0}(U^*,\C^{N \times N})}$. Therefore $\eta_{\{2\}} = \eta_{\{2\}}' = \omega \wedge \overline{\partial} \rho_1$ is also a reasonable representative to use in our algorithm.\footnote{This idea extends to any number of variables. In fact, the placement of $\omega$ in the sequence of wedge products in \cref{eq:omegarep} has no effect on the result other than possibly on the sign of the integral.}

With $\rho_i$ defined in \cref{eq:rho}, we obtain\footnote{While we focus on $d = 2$ in this section, one can use an analogous expansion to rewrite $\eta_{\omega}$ as a sum of concrete matrix-valued differential $(d,d-1)$ forms for any $d\geq 2$,  allowing it to be evaluated by numerical quadrature on the boundary of a $d$-dimensional complex region. In \cref{subsec:triBeyn} we sketch the formulas for a trivariate implementation.}
\begin{equation} \label{eq:dbrho}
\overline{\partial} \rho_i = (-1)^{i-1} \left(Q_i (d Q_i)^H  -  (Q_iQ_i^H)D^{-1}\left(Q_1 (d Q_1)^H  +  Q_2 (d Q_2)^H\right)\right) D^{-1},
\end{equation}
where $D = Q_1 Q_1^H + Q_2 Q_2^H$.  Transforming \cref{eq:dbrho} yields 
\begin{align*}
    Q_1Q_2 \eta_{\{2\}} &=  \left(Q_1 (d Q_1)^H - (Q_1Q_1^H)D^{-1}\left(Q_1 (d Q_1)^H + Q_2 (d Q_2)^H\right)\right) D^{-1}dz_1 dz_2 \\
&= \left( \left(I_N - Q_1Q_1^HD^{-1} \right)Q_1(d Q_1)^H   - Q_1Q_1^HD^{-1} Q_2 (d Q_2)^H\right) D^{-1}dz_1 dz_2 \\
&= \left( \left(Q_2Q_2^HD^{-1} \right)Q_1(d Q_1)^H  -  Q_1Q_1^HD^{-1} Q_2 (d Q_2)^H\right) D^{-1}dz_1 dz_2.
\end{align*}
Therefore, using $\eta_{\{2\}} = \omega \wedge \overline{\partial} \rho_1$ as our representative, we have
\begin{align*}
    \eta_{\omega} &=  \left( Q_1^{-1}Q_2^HD^{-1} Q_1 d Q_1^H - Q_1^HQ_2^{-1}D^{-1}Q_2 d Q_2^H \right) D^{-1} dz_1 dz_2 \\
    &=  \left( Q_2^H(Q_1^HQ_1 + Q_2Q_2^H)^{-1} d Q_1^H  \right.
    \left. - Q_1^H(Q_1Q_1^H + Q_2^HQ_2)^{-1} d Q_2^H \right) D^{-1} dz_1 dz_2
\end{align*}
We show in \cref{subsec:sylv} how we can use this representation to avoid forming Kronecker products explicitly.  For implementation it is convenient to decompose $\eta_{\omega}$ into two $(2,1)$-forms, given by $\eta_{\omega} = U_1 + U_2$, where
\begin{align*}
    U_2 &= \left( Q_2^H(Q_1^HQ_1 + Q_2Q_2^H)^{-1}  \frac{\partial Q_1^H}{\partial z_1} - Q_1^H(Q_1Q_1^H + Q_2^HQ_2)^{-1} \frac{\partial Q_2^H}{\partial z_1}\right) D^{-1} d\overline{z}_1 dz_1 dz_2, \\
    U_1 &= \left( Q_2^H(Q_1^HQ_1 + Q_2Q_2^H)^{-1}  \frac{\partial Q_1^H}{\partial z_2} - Q_1^H(Q_1Q_1^H + Q_2^HQ_2)^{-1} \frac{\partial Q_2^H}{\partial z_2}\right) D^{-1} d\overline{z}_2 dz_1 dz_2.
\end{align*}
Thus, the moment computation in biBeyn reduces to evaluating the moments given by 
    $$
    \frac{1}{(2 \pi i)^2} \int_{\partial C} \hat{V}^H z_d^j (U_1+U_2) \hat{W}, \quad j = 0,\ldots, 2M-1. 
    $$
    The advantage of this formulation is that each term can be evaluated using only matrix-vector products and linear solves involving $P_1$ and $P_2$, without ever forming the full Kronecker operators.   In the following sections we describe how to evaluate $U_1$ and $U_2$ efficiently at quadrature points and how to combine this with suitable quadrature rules.

\subsection{Evaluating the Residue Integrand in Two Variables}
\label{subsec:sylv}

We now describe how to evaluate the integrand efficiently. Without loss of generality, we focus on $U_1$. As emphasized earlier, the key requirement is to avoid forming the Kronecker matrices $Q_i$ explicitly.  Instead, all computations are performed by exploiting the implicit tensor-product structure.  The evaluation proceeds by applying $U_1$ to vectors.  Since $\hat V$ and $\hat W$ have only $k \ll N$ columns, it suffices to compute quantities of the form
$
\vv^H U_1 \ww,
$
for vectors $\vv \in \C^N$ and $\ww \in \C^N$. The full matrix $\hat V^H U_1 \hat W$ is then assembled from $k^2$ such evaluations.

Inspecting the expression for $U_1$, we identify three fundamental types of operations: (1) left multiplications, i.e., $\vv^H Q_i^H$, (2) applications of derivatives, i.e.,  $\frac{\partial Q_i^H}{\partial z_j} \mathbf{x}$, and (3) applications of inverse sums, i.e.,  $(Q_1 Q_1^H + Q_2 Q_2^H)^{-1} \ww$.   To exploit the structure,  we generate our sketching vectors to be separable. That is, 
\[
\vv = \vv_1 \otimes \vv_2, \qquad \ww = \ww_1 \otimes \ww_2,
\]
with $\vv_i,\ww_i \in \C^{n_i}$.  Here, $\otimes$ is the Kronecker product operation.  This restriction preserves the rank conditions required in \cref{subsec:mom} while enabling efficient computations.

\paragraph{(1) Structured multiplications.}
Products with $Q_i$ and $Q_i^H$ decouple across factors, i.e., we have
\[
\vv^H Q_2^H = (\vv_1^H \otimes \vv_2^H)(I_{n_1} \otimes P_2^H)
= \vv_1^H \otimes (\vv_2^H P_2^H).
\]
Thus, these operations reduce to matrix-vector products with $P_i$ rather than with $Q_i$.   Moreover, the output remains separable: applying $Q_i$ or $Q_i^H$ to a separable vector yields another separable vector and so the structure propagates through the computation to the next operation.

\paragraph{(2) Derivative terms.}
For derivative terms,  the intermediate vectors are no longer separable.  To avoid forming Kronecker products explicitly, we use the identity
$
(A \otimes B)\,\mathrm{vec}(X) = \mathrm{vec}(B X A^T).
$
For example,
\[
\frac{\partial Q_1^H}{\partial z_1} \mathbf{x}
=
\left(\frac{\partial P_1^H}{\partial z_1} \otimes I_{n_2}\right)\mathrm{vec}(X)
=
\mathrm{vec}\!\left(X \left(\frac{\partial P_1^H}{\partial z_1}\right)^T\right),
\]
where $\mathrm{vec}$ takes a matrix and stacks its columns one-by-one to make a vector.  Thus,  derivative terms can also be computed via a matrix multiplication of size $n_i \times n_i$.

\paragraph{(3) Inversion of Kronecker sums.}
The term $(Q_1 Q_1^H + Q_2 Q_2^H)^{-1}$ is the inverse of a Kronecker sum.  In general, this would require solving a Sylvester equation, for instance via the Bartels--Stewart algorithm.  However, in our setting, the matrix equation simplifies significantly.   Since $P_i P_i^H$ is Hermitian positive semi-definite,  its Schur decomposition diagonalizes it, i.e., 
$
P_i P_i^H = \mathcal Q_i U_i \mathcal Q_i^H.
$
It follows that $\mathcal Q_1 \otimes \mathcal Q_2$ simultaneously diagonalizes both $Q_1 Q_1^H$ and $Q_2 Q_2^H$. Transforming into this basis reduces the action of $(Q_1 Q_1^H + Q_2 Q_2^H)^{-1}$ to an elementwise division, after which we transform back (this is classical~\cite[Ch.~12]{golub2013matrix}, and also similar to~\cite{benzi2017kron}).   Thus, the cost of this step is dominated by basis transformations and avoids solving a full Sylvester system.  An analogous procedure applies when we have the term $(Q_1^H Q_1 + Q_2^H Q_2)^{-1}$.

\paragraph{Assembly.}
Our evaluation proceeds by propagating the right vector $\ww$ and the left vector $\vv$ inward through the expression for $U_1$,  exploiting separability wherever possible. The final result is obtained by taking the inner product of the resulting vectors.  For $k$ columns of $\hat V$ and $\hat W$, this requires $k^2$ such evaluations, each involving only structured matrix-vector products and small linear solves.  This approach avoids forming the Kronecker matrices entirely and reduces all computations to operations on the original matrices $P_1$ and $P_2$, which is essential for scalability.

\subsection{Quadrature Rules}
\label{subsec:quad}

The procedure described so far can be applied to any region $C \subset \C^2$,  and in principle any quadrature rule on $\partial C$ can be used.  For our applications, we choose $C$ to be a product of ellipses (a polyellipse), which provides greater flexibility than a polydisc (as in~\cite{kravanja1998contour,kravanja2000contour}),  and, importantly, allows us to localize the search near the real plane (see~\cref{sec:num}). 

Let $z_i = x_i + i y_i$, and define
\[
D = D_1 \times D_2,
\qquad
D_i = \{ x_i^2 + y_i^2/b^2 < 1 \}.
\]
Each $D_i$ is an ellipse with major axis $1$ along the real axis and minor axis $b$ along the imaginary axis. By scaling and translation, this construction can be adapted to arbitrary rectangular regions in $\mathbb{R}^2$.  It is tricky to visualize this region because it is in four dimensions,  but it is the standard Cartesian product of two ellipses. We give a projection in \cref{fig:polyellipse} that demonstrates how the polyellipse encloses a real square.

\begin{figure}
    \centering
    \begin{minipage}{0.55\textwidth}
        \begin{overpic}[width=\textwidth, tics=10]{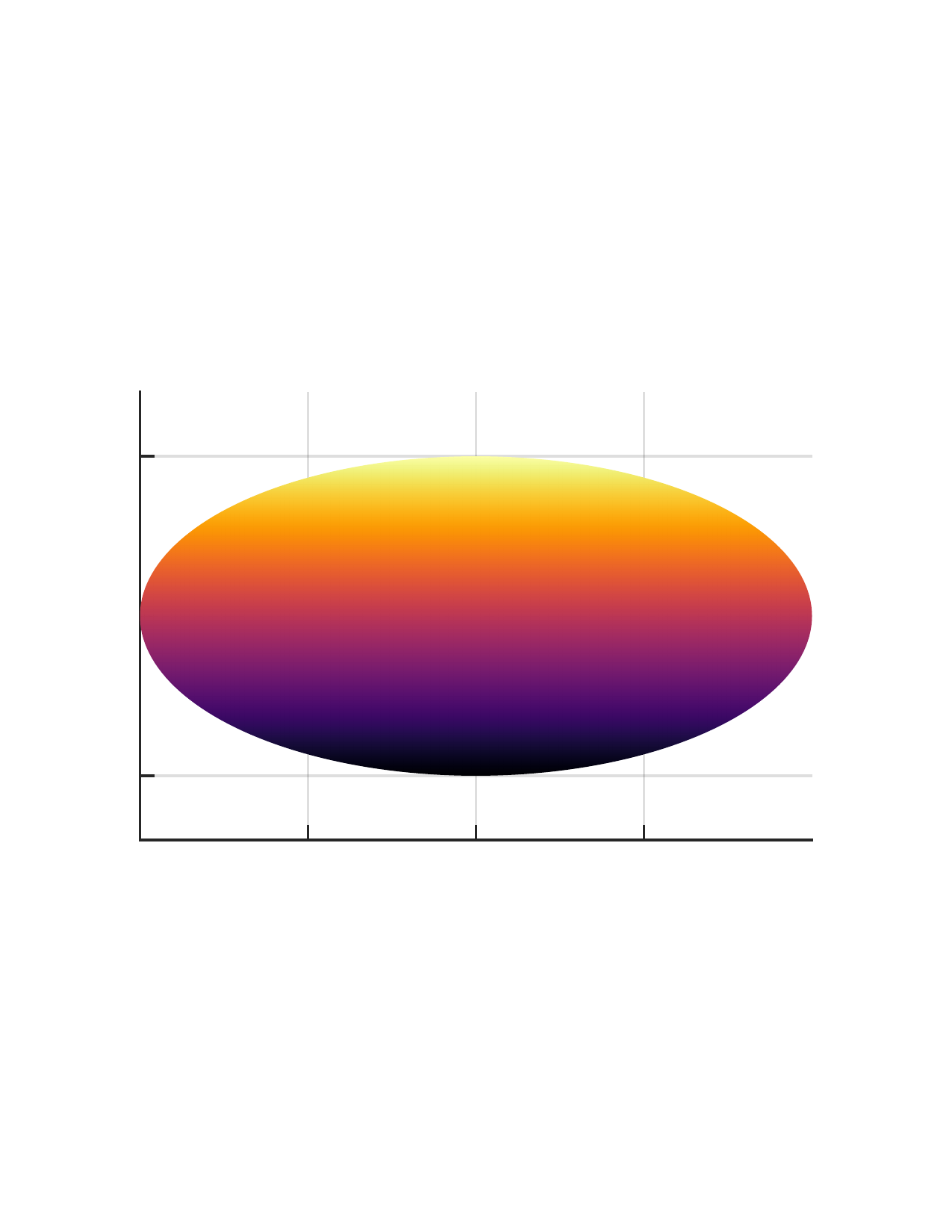}
            \put(35,25){$\text{Re}(z_1)$}
            \put(3,45){\rotatebox{90}{$\text{Im}(z_1)$}}
             {\footnotesize
            \put(6,36){$-.5$}
            \put(9.5,50){$0$}
            \put(8.5,62){$.5$}
            \put(9,29){$-1$}
            \put(22,29){$-.5$}
            \put(38,29){$0$}
            \put(51,29){$.5$}
            \put(65,29){$1$}
            }
        \end{overpic}
    \end{minipage}
    \hspace{-1.5cm}
    \begin{minipage}{0.55\textwidth}
        \begin{overpic}[width=\textwidth]{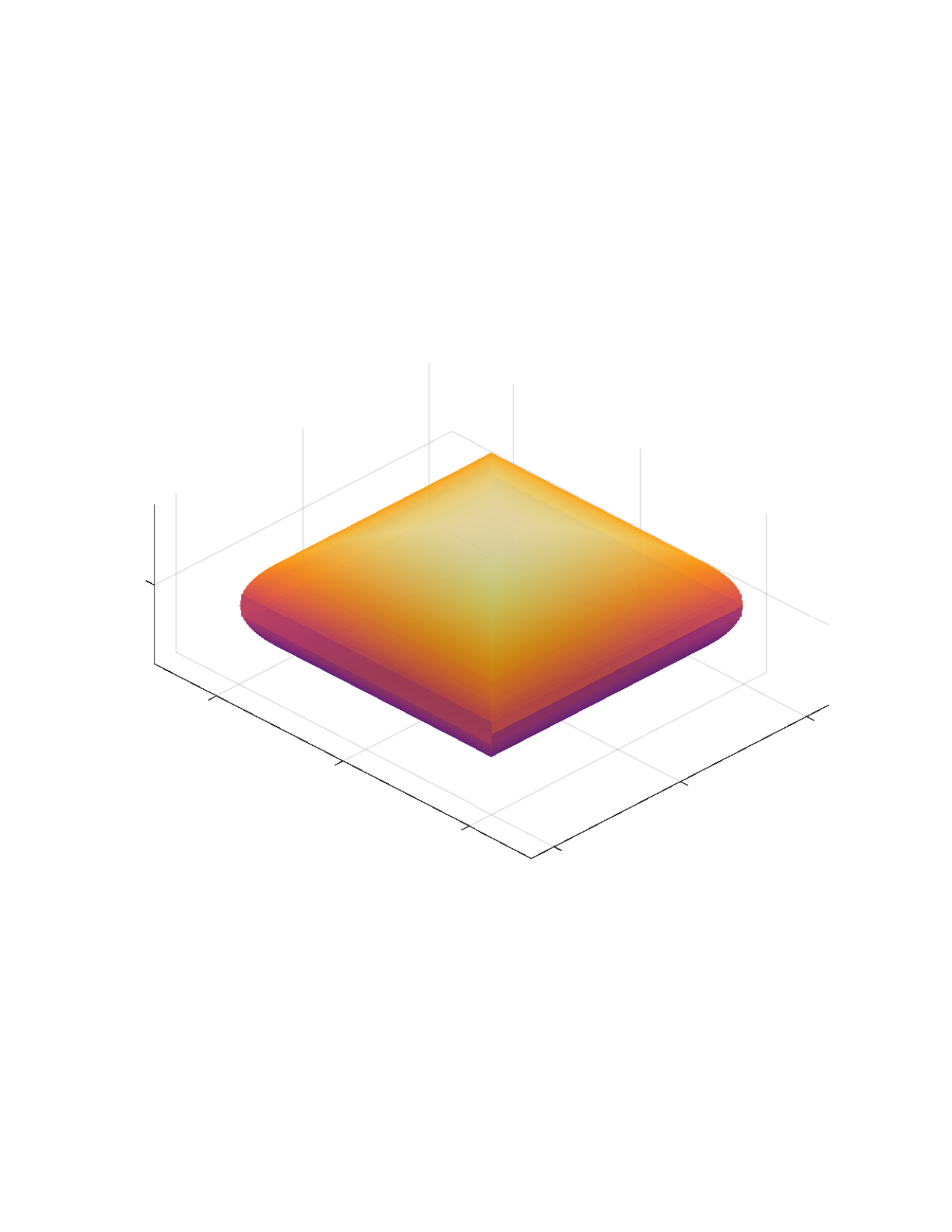}
            \put(15,30){$\text{Re}(z_1)$}
            \put(60,30){$\text{Re}(z_2)$}
            \put(5,35){\rotatebox{90}{$\min(\text{Im}(z_1), \text{Im}(z_2))$}}
            {\footnotesize
            \put(10,52){$0$}
            \put(13,41){$-1$}
             \put(25.5,36){$0$}
             \put(35,31){$1$}
            \put(45,29){$-1$}
            \put(56,34){$0$}
            \put(66,39){$1$}
            }
        \end{overpic}
    \end{minipage}
    
    \vspace{-2cm}
    \caption{Left: a one complex-dimensional slice of our polyellipse $D$ with $b=0.5$. Right: A projection of $D$ into $3$ dimensions.}
    \label{fig:polyellipse}
\end{figure}

The moment computation requires evaluating
\[
\frac{1}{(2\pi i)^2} \int_{\partial D} \hat{V}^H z_2^j (U_1 + U_2)\hat{W}, \qquad j=0,\ldots,2M-1.
\]
The boundary of a polyellipse decomposes as $\partial D = (D_1 \times \partial D_2) \cup (\partial D_1 \times D_2).$ Since $U_1$ and $U_2$ have complementary support, we treat them separately. We focus on $U_1$, for which $ \int_{D_1 \times \partial D_2} \hat{V}^H z_2^j U_1 \hat{W} = 0, $ for all $j$,  so only the contribution from $\partial D_1 \times D_2$ remains, i.e., 
\[
\int_{\partial D_1 \times D_2} \hat{V}^H z_2^j U_1 \hat{W}, \qquad j=0,\ldots,2M-1.
\]
The situation for $U_2$ is analogous, with the roles of the variables reversed.

We parameterize $\partial D_1 \times D_2$ using polar coordinates, i.e., 
$
z_1 = \cos(2\pi \theta_1) + i b \sin(2\pi \theta_1)$ and 
$z_2 = r_2\bigl(\cos(2\pi \theta_2) + i b \sin(2\pi \theta_2)\bigr)$, with $(\theta_1,\theta_2,r_2) \in [0,1]^3$. This yields
\[
\int_{\partial D_1 \times D_2} \hat{V}^H z_2^j U_1 \hat{W}
=
4\pi^2
\int_{[0,1]^3}
\hat{V}^H z_2^j U_1 \hat{W} \;
b r_2 \,\psi(\theta_1)\, d r_2 \, d\theta_1 \, d\theta_2,  \, j=0,\ldots,2M-1,
\]
where
$
\psi(\theta_1) = -\sin(2\pi \theta_1) + i b \cos(2\pi \theta_1)
$
arises from the differential $dz_1$.   We discretize this integral using a tensor-product quadrature rule:
\begin{itemize}
\item trapezoidal rules with $N_1$ and $N_2$ points for $\theta_1$ and $\theta_2$,
\item Gauss--Legendre quadrature with $N_r$ nodes $\rho_t$ and weights $w_t$ for $r_2$.
\end{itemize}
This gives nodes $ \theta_{1,\ell} = \frac{\ell}{N_1}$,  $\theta_{2,n} = \frac{n}{N_2}$,  $\zeta_{1,\ell} = \cos(2\pi \theta_{1,\ell}) + i b \sin(2\pi \theta_{1,\ell})$, and 
$
\zeta_{2,t,n} = \rho_t \bigl(\cos(2\pi \theta_{2,n}) + i b \sin(2\pi \theta_{2,n})\bigr).
$
The resulting approximation is
\begin{equation} \label{eq:quadrature}
\int_{\partial D_1 \times D_2} \hat{V}^H z_2^j U_1 \hat{W}
\approx
\frac{4\pi^2 b}{N_1 N_2}
\sum_{\ell=1}^{N_1}
\sum_{n=1}^{N_2}
\sum_{t=1}^{N_r}
w_t \rho_t \,
\hat{V}^H (\zeta_{2,t,n})^j U_1 \hat{W} \;
\psi(\theta_{1,\ell}).
\end{equation}
An analogous construction applies to $U_2$, yielding a complete quadrature rule for the residue integral.

In practice, we first evaluate $\hat{V}^H U_i \hat{W}$ at each quadrature point, and then reuse these values to compute all moments by multiplying with $(z_2)^j$. This makes the marginal cost of increasing the number of moments negligible, while increasing the sketch size (i.e., the number of columns of $\hat V,\hat W$) remains comparatively expensive.

\subsubsection{Complex vs.\ Real Solutions}
\label{subsubsec:real}

The use of a polyellipse, rather than a polydisc, is motivated by the need to isolate real solutions. Many applications are concerned only with real eigenvalues, and it is therefore desirable to concentrate the contour near $\mathbb{R}^2$.  Although $\mathbb{R}^2$ is not an open subset of $\C^2$, a polyellipse with small imaginary axis provides an effective region.  When $b=1$, the region reduces to a polydisc.  For real-valued problems, we instead take $b \ll 1$ (typically $b=10^{-6}$), producing a thin neighborhood of the real box.  Empirically, this significantly improves performance in problems where many complex solutions are present but only a small number of real solutions are of interest (see \cref{subsec:arma}).

\section{Analysis}
\label{sec:analysis}

We give a brief complexity analysis of multiBeyn, with a more explicit discussion of the two-dimensional implementation biBeyn, and then analyze the conditioning of the reduced eigenvalue problem.

\subsection{Complexity of multiBeyn}
\label{subsec:comp}

The central goal of complexity analysis is to show significant gains over non-contour solvers in both space and time complexity. We summarize this result in \cref{tab:comp}.  To simplify,  we assume (1) all $P_i$ have the same size $n\times n$, (2) the contour is a polyellipse $D$ in $d$ variables, and (3) the reduced eigenproblem is negligible because the number of enclosed eigenvalues is small.  Therefore, the complexity of multiBeyn is dominated by the numerical evaluation of the contour integral. As demonstrated in~\cite{kravanja1998contour,kravanja2000contour}, and generalizing our \cref{subsec:quad}, integrating on $D$ requires $2d-1$ variables; we assume that we use $N_p$ points in each one of those variables (in the language of \cref{subsec:quad}, $N_1=N_2=N_r=N_p$). Therefore the overall cost is the cost of one quadrature point times $N_p^{2d-1}$.

\begin{table}[]
    \centering
    \caption{Complexity of multiBeyn versus operator determinants. The complexity estimate for multiBeyn assumes $m \ll n^d$ so that the reduced eigenvalue problem is negligible.}
    \setlength{\tabcolsep}{5.5pt}
    \begin{tabular}{ccccc}
    \hline \\
     & multiBeyn & Operator Det & biBeyn & Operator Det 2D \\
     Time Complexity & $O\left(k^2 N_p^{2d-1} d^2 n^{d+1}2^d\right)$ & $\mathcal{O}(n^{3d})$ & $\mathcal{O}(k^2 N_p^3 n^3)$ & $\mathcal{O}(n^6)$ \\
     Space Complexity & $\mathcal{O}(k n^d)$ & $\mathcal{O}(n^{2d})$ & $\mathcal{O}(k n^2)$ & $\mathcal{O}(n^4)$ \\
    \hline \\
    \end{tabular}
    \label{tab:comp}
\end{table}

At each quadrature point we must evaluate a local representative of $\eta_\omega$, which is a sum of terms involving up to $d-1$ factors of $\bar\partial \rho_i$ multiplied by $\omega$, i.e., 
$$
\vv^H \sum_{\sigma \in S_{d-1}} (-1)^{\text{sgn}(\sigma)}  \overline{\partial} \rho_{\sigma(1)} \wedge \cdots \wedge \widehat{\overline{\partial} \rho_{\sigma(\ell)}} \wedge \cdots \wedge \overline{\partial} \rho_{\sigma(d)} \wedge \omega \ww,
$$
for $k$ vectors $\vv$ and $\ww$, where $\overline{\partial} \rho_{\sigma(i)}$ can be expanded as in \cref{eq:dbrho}.  In the general $d$-dimensional case, and without the simplifications available for $d=2$,  we multiply $\ww$ through from right to left. To calculate terms $\left(\sum_{j=1}^d Q_j Q_j^H\right) \ww$ for arbitrary $\ww$, we can apply the same method as in \cref{subsec:sylv}, so that there is an initial cost of Schur decomposition of matrices of size $n$, which is $\mathcal{O}(n^3)$, plus a cost of $dn^{d+1}$ for the multiplications required to rotate $\ww$ into and out of the diagonalizing basis. Next, we have multiplications of the form $Q_i Q_i^H \ww$, which has cost $\mathcal{O}(n^{d+1})$ when we reshape $\ww$ to a tensor and only multiply in the $i$-th mode. The product $Q_i d Q_i^H \ww$ has cost $\mathcal{O}(d n^{d+1})$, as it contains $d$ copies of the same reshaped tensor product. Multiplying $\omega \ww$ costs $\mathcal{O}(d n^2)$, because we can assume $\ww = \otimes_{i=1}^d \ww_i$, for $\ww_i$ each of length $n$, which makes this cost negligible overall.

If multiplying from right to left, after passing through $\overline{\partial} \rho_{\sigma(d)}$, we have $d$ distinct $(d,1)$-forms. After passing through $i$ forms $\overline{\partial} \rho_{\sigma(j)}$, we have $\binom{d}{i}$ distinct $(d,i)$-forms. Naively, the next step is calculated separately for each one of these matrix-valued forms. Putting it all together, the cost of step $i+1$, for a single $(i,d)$ form, working right to left in \cref{eq:dbrho} is
\begin{enumerate}
\item $\mathcal{O}(d n^{d+1})$ for the right-hand product $\ww^{(1)} = \left(\sum_{j=1}^d Q_j Q_j^H\right)^{-1} \ww$.
\item $\mathcal{O}(d n^{d+1})$ for $\ww^{(2)} = \left(\sum_{j=1}^d Q_j (dQ_j)^H\right) \ww^{(1)}$.
\item $\mathcal{O}(d^2 n^{d+1})$ for $\ww^{(3)} = \left(\sum_{j=1}^d Q_j (dQ_j)^H\right) \ww^{(2)}$.
\item $\mathcal{O}(d n^{d+1})$ for $\ww^{(4)} = Q_i Q_i^H \ww^{(3)}$.
\item $\mathcal{O}(n^{d+1})$ for $\ww^{(5)} = Q_i (dQ_i^H) \ww^{(1)}$.
\end{enumerate}
Thus the total cost for this step is $\mathcal{O}(\binom{d}{i} d^2n^{d+1})$, so overall the cost is
$$
\mathcal{O}\left(k N_p^{2d-1} \sum_{i=0}^{d-2} \binom{d}{i} d^2n^{d+1}\right) = 
\mathcal{O}\left(k N_p^{2d-1} (d^2 n^{d+1}2^d)\right).
$$
Then there is a cost of $\mathcal{O}(k^2 d n^d)$ from $k^2 d$ dot products to multiply the vectors $\eta_{\omega} \ww$ by columns of $\hat{V}$, for $d$ separate $(d,d-1)$-forms, and assemble into the final matrix; combining these yields the formula in \cref{tab:comp}.
This remains asymptotically favorable for large $n$ compared with the dense operator-determinant approach, whose cost is $\mathcal{O}(n^{3d})$~\cite{atkinson1972multieig}. 

We now specialize to biBeyn and the three operations described in~\cref{subsec:sylv}. The cost of step 1 is $\mathcal{O}(n^2)$, and the cost of step 3 is $\mathcal{O}(n^3)$, being dominated by Schur decompositions. The cost of step 2 is also $\mathcal{O}(n^3)$, as it is dominated by matrix products of $n \times n$ matrices. The assembly step has cost $\mathcal{O}(k^2 n^2)$. Therefore the overall cost is $\mathcal{O}(k^2 N_p^3 n^3)$, in line with our overall analysis. While asymptotically this remains the same, our simplifications have the effect of reducing the implied constant dramatically in practice. An efficient $d$-dimensional algorithm would require a similar simplification to avoid as many $\mathcal{O}(n^{d+1})$ operations as possible.

An added bonus of any contour algorithm is that, because the quadrature points are independent, the contour computation is embarrassingly parallel. In an ideal parallel wall-clock model with enough cores, the factor $N_p^{2d-1}$ can be removed from the runtime estimate.

The storage cost of multiBeyn is dominated by constructing $\hat{V}^H z_d^j \eta_{\omega} \hat{W}$. We never explicitly construct the large matrix $\eta_{\omega}$, and we have $\hat{V}, \hat{W} \in \C^{N \times k}$, and $\hat{V}^H z_d^j \eta_{\omega} \hat{W} \in \C^{k \times k}$. The storage cost is dominated by storage of $k \times N$ matrices at various points, and is therefore $\mathcal{O}(k n^d)$, because we do not need to store the result at all quadrature points at once, but rather can accumulate as we go. This again compares favorably to the operator determinants method, which has storage cost $\mathcal{O}(n^{2d})$. For biBeyn, we have storage cost $\mathcal{O}(k n^2)$ as opposed to $\mathcal{O}(n^4)$ for operator determinants.

Due to a potentially large constant hidden by the big-$O$ notation, and large values of $N_p$, we have been unable to see the asymptotic improvement in time complexity in our practical experiments. We have, however, seen the crossover in space complexity, in a particularly dramatic fashion. In \cref{subsec:arma,subsec:mpsl}, we demonstrate experiments where the storage cost caused the operator determinants method to exceed the available compute memory, but biBeyn successfully solves the problem.

\subsection{Conditioning Analysis of multiBeyn}
\label{subsec:cond}

It is a natural question to ask whether we can generalize the idea of pseudospectral inclusion from univariate contour solvers~\cite{colbrook2025infbeyn} to multiBeyn. Unfortunately, it is possible for the pseudospectra of MEPs to be unbounded, and also difficult to compare pseudospectra between a $d$-dimensional MEP and a one-dimensional projected GEP, so we view a full pseudospectral inclusion result as difficult to obtain. Instead, we analyze the conditioning of the eigenvalue problem constructed in multiBeyn, which can be seen as pseudospectral inclusion for arbitrarily small perturbations. We first need to define a suitable condition number for MEPs. Suppose that $\zz^* = (z_1^*,\ldots,z_d^*)$ is a simple eigenvalue of the MEP $P = \{P_i,1 \leq i \leq d\}$ in \cref{eq:mepform} in some compact region $C$, with right eigenvectors $\vv_1,\ldots,\vv_d$ and left eigenvectors $\ww_1,\ldots,\ww_d$, normalized so that $||\vv_i||_2 = ||\ww_i||_2 = 1$ for $i = 1,\ldots,d$. Analogously to~\cite{hochstenbach2003conditioning}, we define
$$
B_0= \begin{bmatrix}
    \ww_1^H \frac{\partial P_1}{\partial z_1}(\zz^*) \vv_1 & \cdots &\ww_1^H \frac{\partial P_1}{\partial z_d}(\zz^*) \vv_1 \\
    \vdots & \ddots & \vdots \\
    \ww_d^H \frac{\partial P_d}{\partial z_1} (\zz^*)\vv_d & \cdots &\ww_d^H \frac{\partial P_d}{\partial z_d}(\zz^*) \vv_d \\
\end{bmatrix}.
$$
A normwise condition number of $\zz^*$ can be defined by
$$
\kappa(\zz^*,P)
:=\limsup_{\varepsilon\to0}
\left\{
\frac{\|\Delta\zz^*\|}{\varepsilon}:
(P_i+\Delta P_i)(\zz^*+\Delta\zz)(\vv_i+\Delta\vv_i)=\!0\ \forall i,
\|\Delta P\| \le \varepsilon
\right\}.
$$
where $\Delta P_i \in \Omega(U,\C^{n_i \times n_i})$, $||P_i|| := \max_{\zz \in C} ||P_i(\zz)||$, where the norm on the right-hand side is the operator $2$-norm, or the largest singular value, and $||P||$ is the $2$-norm of the vector $||P_1||,\ldots,||P_d||$. We can relate this definition to $B_0$, similarly to~\cite[Thm.~6]{hochstenbach2003conditioning}.\footnote{In~\cite{hochstenbach2003conditioning}, the authors use a weighted $2$-norm, so our result, with the standard $2$-norm, is slightly different, even for linear MEPs.}
\begin{theorem}
The condition number $\kappa(\zz^*, P)$ is given by
\begin{equation} \label{eq:condnum}
\kappa(\zz^*, P) = \| B_0^{-1} \|.
\end{equation}
\end{theorem}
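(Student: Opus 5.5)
The proof proceeds by first-order perturbation, following the argument of~\cite[Thm.~6]{hochstenbach2003conditioning}. First we show that $B_0$ is invertible at a simple eigenvalue. Second we derive the first-order expansion of $\Delta\zz^*$. Third we show the resulting bound is attained.

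\textbf{Step 1: invertibility of $B_0$.} By Jacobi's formula, for each $i$,
\[
\frac{\partial \det P_i}{\partial z_j}(\zz^*) = \operatorname{tr}\bigl(\operatorname{adj}(P_i(\zz^*))\,\partial_j P_i(\zz^*)\bigr).
\]
Simplicity forces each $P_i(\zz^*)$ to have a one-dimensional kernel, so $\operatorname{adj}(P_i(\zz^*)) = c_i \vv_i\ww_i^H$ with $c_i\neq 0$. Hence $Jp(\zz^*) = \operatorname{diag}(c_i)\,B_0$, and $\det Jp(\zz^*)\ne 0$ gives that $B_0$ is invertible.

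\textbf{Step 2: first-order expansion.} The implicit function theorem, applied to the perturbed system with a normalization such as $\ww_i^H\Delta\vv_i=0$, shows that for small $\|\Delta P\|$ there is a unique nearby eigenvalue, analytic in the perturbation. Expand
\[
(P_i+\Delta P_i)(\zz^*+\Delta\zz)(\vv_i+\Delta\vv_i)=0
\]
to first order, giving
\[
P_i(\zz^*)\Delta\vv_i + \sum_j \partial_jP_i(\zz^*)\vv_i\,\Delta z_j + \Delta P_i(\zz^*)\vv_i = O(\varepsilon^2).
\]
Multiplying on the left by $\ww_i^H$ annihilates the first term. This yields
\[
B_0\Delta\zz = -\mathbf{b} + O(\varepsilon^2), \qquad b_i=\ww_i^H\Delta P_i(\zz^*)\vv_i .
\]
Since $|b_i|\le \|\Delta P_i(\zz^*)\|\le\|\Delta P_i\|$, we get $\|\mathbf b\|_2\le\|\Delta P\|\le\varepsilon$. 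Therefore $\|\Delta\zz\|\le \|B_0^{-1}\|\varepsilon+O(\varepsilon^2)$, and $\kappa\le\|B_0^{-1}\|$.

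\textbf{Step 3: attainment.} Choose a unit vector $\mathbf b$ with $\|B_0^{-1}\mathbf b\|=\|B_0^{-1}\|$, i.e.\ a top right singular vector of $B_0^{-1}$. Take constant perturbations
\[
\Delta P_i(\zz) = -\varepsilon\, b_i\,\ww_i\vv_i^H .
\]
These are analytic, satisfy $\|\Delta P_i\| = \varepsilon|b_i|$ (so $\|\Delta P\|=\varepsilon$), and give $\ww_i^H\Delta P_i\vv_i=-\varepsilon b_i$. The expansion then yields $\|\Delta\zz\|/\varepsilon\to\|B_0^{-1}\|$, establishing \cref{eq:condnum}.

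The main obstacle is the rigorous justification of the first-order expansion. We must show that the perturbed eigenvalue and eigenvectors exist, are unique near $\zz^*$, and depend smoothly on the perturbation. Any other nearby solutions must also be excluded, since the definition takes a $\limsup$ over all admissible solutions. We would handle this by applying the implicit function theorem to the bordered map
\[
(\zz,\{\vv_i\})\mapsto\bigl(\{P_i(\zz)\vv_i\},\,\{\ww_i^H\vv_i-1\}\bigr).
\]
The Jacobian of this map is invertible exactly because $B_0$ is invertible and each kernel is simple. Isolation of $\zz^*$ gives uniqueness in a neighborhood for small $\varepsilon$.
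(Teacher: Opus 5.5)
Your argument follows the paper's proof. Both use the same first-order expansion, premultiply by $\ww_i^H$ to get $B_0\Delta\zz=-\mathbf b+\mathcal{O}(\varepsilon^2)$, and show attainment with constant rank-one perturbations $\pm\varepsilon y_i\ww_i\vv_i^H$ built from a maximizing singular vector of $B_0^{-1}$. Your Step 1 adds something real: the paper only asserts that simplicity makes $B_0$ nonsingular, whereas your identity $Jp(\zz^*)=\operatorname{diag}(c_i)\,B_0$ proves it. The same Jacobi formula also justifies your claim that each kernel is one-dimensional: otherwise $\operatorname{adj}P_i(\zz^*)=0$ and the $i$-th row of $Jp(\zz^*)$ vanishes. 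You also state explicitly the bound $\|\mathbf b\|_2\le\varepsilon$, which the paper uses without comment.

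One step in your final paragraph is wrong as stated. Normalizing with $\ww_i$, whether by $\ww_i^H\Delta\vv_i=0$ or $\ww_i^H\vv_i=1$, fails whenever $\ww_i^H\vv_i=0$, and that can happen at a simple eigenvalue. Take $P_1(\zz)=\begin{bmatrix}0&1\\ z_1&0\end{bmatrix}$, $P_2(\zz)=[z_2]$ and $\zz^*=0$. Then $Jp(\zz^*)=\operatorname{diag}(-1,1)$, and with $e_1,e_2$ the standard basis of $\C^2$ one has $\vv_1=e_1$ and $\ww_1=e_2$. Here the constraint $\ww_1^H\vv_1=1$ cannot hold at the base point. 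Moreover, the linearization of your bordered map annihilates $(\Delta\zz,\Delta\vv_1,\Delta\vv_2)=(0,\vv_1,0)$, so its Jacobian is singular. Your claim that it is invertible ``exactly because $B_0$ is invertible and each kernel is simple'' is therefore false in general.

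The fix is to normalize with $\vv_i$, or with any $\mathbf u_i$ satisfying $\mathbf u_i^H\vv_i\neq 0$. Suppose the linearization vanishes. Then premultiplying by $\ww_i^H$ gives $B_0\Delta\zz=0$, hence $\Delta\zz=0$. This leaves $P_i(\zz^*)\Delta\vv_i=0$, so $\Delta\vv_i=\alpha_i\vv_i$, and the condition $\vv_i^H\Delta\vv_i=0$ forces $\alpha_i=0$.

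Separately, for the $\mathcal{O}(\varepsilon^2)$ remainders to be uniform over all $\Delta P$ with $\|\Delta P\|\le\varepsilon$, you need $\zz^*$ in the interior of $C$. Cauchy estimates then convert the sup-norm bound on $C$ into bounds on the derivatives of $\Delta P_i$ near $\zz^*$. The paper is silent on this point as well.
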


\begin{proof}
If we expand the equality constraints and keep only the first-order
terms, then we get
$$
\Delta P_i(\zz^*) \vv_i
+ \sum_{j=1}^{d} \Delta z_j
\left(\frac{\partial P_i}{\partial z_j}\right) \vv_i
+ P_i(\zz^*)\,\Delta \vv_i
= \mathcal{O}(\varepsilon^2), \quad 1 \leq i \leq d.
$$
Premultiplying by $\ww_i^{H}$ yields
$$
\ww_i^{H}\,\Delta P_i(\zz^*)\vv_i
+ \ww_i^{H} \sum_{j=1}^{d}
\Delta z_j
\left(\frac{\partial P_i}{\partial z_j}\right)\vv_i
= \mathcal{O}(\varepsilon^2), \quad 1 \leq i \leq d.
$$
By rearranging the equations we obtain the linear system
$$
\begin{bmatrix}
    \ww_1^H \frac{\partial P_1}{\partial z_1}(\zz^*) \vv_1 & \cdots &\ww_1^H \frac{\partial P_1}{\partial z_d}(\zz^*) \vv_1 \\
    \vdots & \ddots & \vdots \\
    \ww_d^H \frac{\partial P_d}{\partial z_1} (\zz^*)\vv_d & \cdots &\ww_d^H \frac{\partial P_d}{\partial z_d}(\zz^*) \vv_d \\
\end{bmatrix}
\begin{bmatrix}
\Delta z_1 \\
\vdots \\
\Delta z_d
\end{bmatrix}
=
-\begin{bmatrix}
\ww_1^{H}\,\Delta P_1(\zz^*) \vv_1 \\
\vdots \\
\ww_d^{H}\,\Delta P_d(\zz^*) \vv_d
\end{bmatrix}
+ \mathcal{O}(\varepsilon^2),
$$
or in shorter form
$$
B_0\, \Delta \zz =
-\begin{bmatrix}
\ww_1^{H} \Delta P_1(\zz^*)\vv_1 \\
\vdots \\
\ww_d^{H} \Delta P_d(\zz^*)\vv_d
\end{bmatrix}
+ \mathcal{O}(\varepsilon^2).
$$

Since $\zz^*$ is an algebraically simple eigenvalue, it follows that
$B_0$ is nonsingular. Thus,
$$
\Delta \zz
= -B_0^{-1}
\begin{bmatrix}
\ww_1^{H} \Delta P_1(\zz^*)\vv_1 \\
\vdots \\
\ww_d^{H} \Delta P_d(\zz^*)\vv_d
\end{bmatrix}
+ \mathcal{O}(\varepsilon^2),
$$
and we conclude
$$
\|\Delta \zz\|
\le \|B_0^{-1}\| \, \varepsilon
+ \mathcal{O}(\varepsilon^2).
$$
Hence, the expression in \cref{eq:condnum} is an upper bound for the condition number.  
To show that this bound can be attained we consider the matrices
$$
\Delta P_i = \varepsilon y_i \ww_i \vv_i^{H},
$$
for $i = 1,\ldots,d$, where $y_i$ is the $i$-th component of the singular vector of $B_0^{-1}$ corresponding to the largest singular value. Notice that $||\Delta P_i|| = \varepsilon |y_i|$, so $||\Delta P|| = \varepsilon$, and
$$
\Delta \zz
= -B_0^{-1}
\begin{bmatrix}
\ww_1^{H} \Delta P_1(\zz^*)\vv_1 \\
\vdots \\
\ww_d^{H} \Delta P_d(\zz^*)\vv_d
\end{bmatrix}
+ \mathcal{O}(\varepsilon^2) = -B_0^{-1}
\varepsilon \yy
+ \mathcal{O}(\varepsilon^2),
$$
so $||\Delta \zz|| = ||B_0^{-1}||\varepsilon + \mathcal{O}(\varepsilon^2)$, and the result follows by taking $\varepsilon \to 0$.
\end{proof}

Now we compare to the condition number of the corresponding eigenvalue $z_d^*$ in \cref{eq:finalevp}. Let $\mathcal{V},\mathcal{W},V,W,V_0,W_0,\Sigma_0,\Lambda$ be as defined in \cref{subsec:mom}. The eigenvalue condition number of $z_d^*$, following~\cite{tisseur2000conditioning}, is 
$$
\kappa(z_d^*) = \frac{||\xx||_2||\yy||_2}{|\xx^* \Sigma_0 \yy|}, 
$$
where $\xx$ and $\yy$ are the left and right eigenvectors of \cref{eq:finalevp} associated to $z_d^*$. Given our derivation of \cref{eq:finalevp}, we can simplify. Let $R_V = V_0^H \mathcal{V}$ and $R_W = \mathcal{W} W_0$, where $\mathcal{V}$ and $\mathcal{W}$ are defined with eigenvectors scaled as in \cref{thm:simplematrixgrothres}. Then $R_V R_W = \Sigma_0$, and we can set
$$
\xx^H = e_j^H R_V^{-1}, \quad \yy = R_W^{-1}e_j,
$$
where $z_d^*$ is the $(j,j)$ entry of the diagonal matrix $\Lambda$. This implies that $\xx^H \Sigma_0 \yy = 1$, so 
$$
\kappa(z_d^*)  = ||R_V^{-1} e_j||_2 ||R_W^{-1}e_j||_2 = ||(V_0^H \mathcal{V})^{-1} e_j||_2 || (\mathcal{W} W_0)^{-1} e_j ||_2.
$$
This formula expresses the full dependence of our final eigenvalue problem on the Hankel matrix construction, which is known to be ill-conditioned for too many moments. To gain additional insight, we suppose that we only calculate two moments, so that
$$
\mathcal{V} = \hat{V}^H V, \quad \mathcal{W} = W^H \hat{W}.
$$
Then
\begin{align*}
\kappa(z_d^*)  &=  ||(V_0^H \hat{V}^H V)^{-1} e_j||_2 || (W^H \hat{W} W_0)^{-1} e_j ||_2 \\
&\leq \frac{1}{\sigma_{\min}(V_0^H \hat{V}^H V) \sigma_{\min}(W^H \hat{W} W_0)} \\
&\leq \frac{1}{\sigma_{\min}(\hat{V}^H V) \sigma_{\min}(W^H \hat{W} )} \\
&\leq \frac{1}{\sigma_{\min}(V) \sigma_{\min}(W) \cos(\theta_V) \cos(\theta_W)},
\end{align*}
where $\theta_V$ and $\theta_W$ are the largest principal angles between $V$ and $\hat{V}$, and $W$ and $\hat{W}$~\cite[p. 329]{golub2013matrix}.
This demonstrates that the conditioning in this case only depends on how well we approximate the subspaces $V$ and $W$, and the intrinsic conditioning of the eigenvector matrices $V$ and $W$, which also depends on the scaling from \cref{thm:simplematrixgrothres}. If we assume that with high probability the principal angles are small, then the conditioning only depends on the intrinsic conditioning of the eigenvector matrices. Unfortunately, it is quite clear that these may be ill-conditioned even for initially well-conditioned eigenvalues. This is similar to the univariate result of~\cite[Thm.~3.1]{colbrook2025infbeyn}, where the pseudospectral inclusion depends on the singular values of the matrices of eigenvectors. Despite the fact that this is not a general guarantee of a well-conditioned final eigenvalue problem, our formulas allow for quick analysis of stability on a case-by-case basis, as univariate results do.

\section{Numerical Experiments}
\label{sec:num}

We test biBeyn on practical examples from delay-differential equations, multiparameter Sturm-Liouville problems, and ARMA models. This demonstrates that biBeyn succeeds in solving previously infeasible real-world problems across many different types of MEPs: linear, nonlinear polynomial, and nonpolynomial analytic. We performed all of the experiments on an AMD Ryzen 9 5950x using MATLAB r2023a. Code to reproduce all of the experiments can be found in~\cite{graf2026beyn}.\footnote{MATLAB's parfor is not exactly deterministic due to variations in the order of operations coming from different assignments to workers during different runs of an experiment, so the experiments are not precisely reproducible, but are as close as possible given this restriction.}

\subsection{Delay-Differential Equations}
\label{subsec:dde}

Delay-differential equations (DDEs) occur in a wide variety of modeling applications~\cite{kuang1993delay,balachandran2009delay}. An important aspect is the stability of solutions. One approach is to attempt to find the critical delays, which are the delays on the boundary of the region of stable delays. The authors of~\cite{jarlebring2009dde} demonstrate how to find the critical delays for a delay-differential equation with a single delay, or with multiple commensurate delays, meaning that the delays are integer multiples of a single delay, by solving a polynomial MEP. However, a general multiple delay DDE is beyond their methods, because the associated characteristic equation leads to a nonpolynomial MEP. A contour solver is the only possible nonlocal method to solve a nonpolynomial MEP.

Consider the DDE with two delays
\begin{equation} \label{eq:DDE}
B \dot{x}(t) = A_0 x(t) + A_1 x(t - h_1) + A_2 x(t - h_2),
\end{equation}
where $B,A_0,A_1,A_2$ are matrices. The associated characteristic equation is
\begin{equation} \label{eq:DDEcrit}
\left( A_0 + e^{-\lambda h_1} A_1 + e^{-\lambda h_2} A_2 - \lambda B \right) \vv = 0.
\end{equation}
A DDE is stable if all the eigenvalues $\lambda$ are contained in the open left half plane. Therefore, an important step in a stability analysis is to find pairs $(h_1,h_2)$ that give a purely imaginary eigenvalue $\lambda$, where the DDE may cross over from stable to unstable~\cite{jarlebring2009dde}. Because we want purely imaginary $\lambda$, we take the complex conjugate of \cref{eq:DDEcrit}, and obtain a two-parameter analytic eigenvalue problem
\begin{equation} \label{eq:DDEcritevp}
\begin{aligned}
\left( A_0 + e^{-\lambda h_1} A_1 + e^{-\lambda h_2} A_2 - \lambda B \right) \vv_1 = 0, \\
\left( \overline{A}_0 + e^{\lambda h_1} \overline{A}_1 + e^{\lambda h_2} \overline{A}_2 + \lambda \overline{B} \right) \vv_2 = 0,
\end{aligned}
\end{equation}
Previously, the best method to solve such a problem was the local method from~\cite{plestenjak2016mep}, which accurately traces the critical delay curve once a single point on the curve is found. However, we can solve nonpolynomial MEPs directly, so we can plug in arbitrary values of $h_2$ and find the resulting $h_1$ in the critical delay pair $(h_1,h_2)$. To demonstrate the effectiveness of our method, we solve a problem with $2 \times 2$ random matrices in~\cite{graf2026beyn}, with the resulting curve in \cref{fig:DDE}. We sample evenly spaced points in $h_2$ between $h_2=1$ and $h_2 = 3$, and solve for the resulting value of $h_1$ within our search region.

\begin{figure}
\begin{center}
\begin{overpic}[width=.7\textwidth]{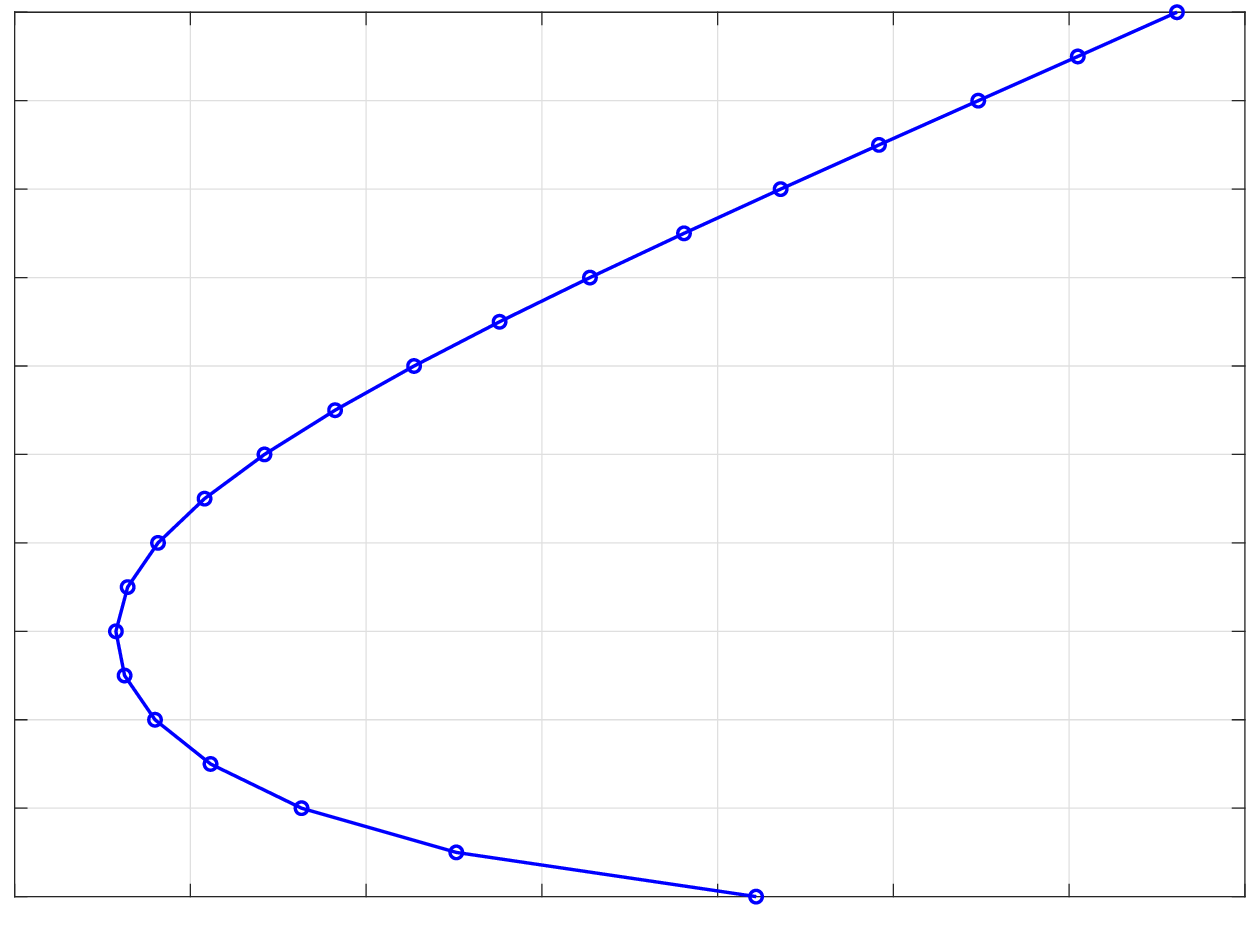}
\put(41,-7){$h_1 \text{ (delay 1)}$}
            \put(-10,30){\rotatebox{90}{$h_2 \text{ (delay 2)}$}}
             {\footnotesize
            \put(-3.5,2){$1.0$}
            \put(-3.5,9){$1.2$}
            \put(-3.5,16){$1.4$}
            \put(-3.5,23){$1.6$}
            \put(-3.5,30){$1.8$}
            \put(-3.5,37){$2.0$}
            \put(-3.5,44.1){$2.2$}
            \put(-3.5,51.2){$2.4$}
            \put(-3.5,58.3){$2.6$}
            \put(-3.5,65.4){$2.8$}
            \put(-3.5,72.5){$3.0$}
            \put(0,-1){$2.6$}
            \put(14,-1){$2.7$}
            \put(28,-1){$2.8$}
            \put(42,-1){$2.9$}
            \put(56,-1){$3.0$}
            \put(70,-1){$3.1$}
            \put(84,-1){$3.2$}
            \put(98,-1){$3.3$}
}
\end{overpic}
\end{center}
\vspace{.5cm}
\caption{Critical delay pairs for the DDE in~\cref{eq:DDE}, found as solutions to~\cref{eq:DDEcrit} and~\cref{eq:DDEcritevp} with purely imaginary $\lambda$.}
\label{fig:DDE}
\end{figure}

\subsection{Multiparameter Sturm-Liouville Problems}
\label{subsec:mpsl}

Sturm-Liouville problems are a common application of univariate contour methods, because they have infinitely many eigenvalues, but there are very accurate asymptotic formulas that allow for a contour method to search only small regions to find eigenvalues in a particular order~\cite{horning2020feast}. A similar theory exists for multiparameter Sturm-Liouville problems~\cite{atkinson2010multieig}. Consider the multiparameter Sturm-Liouville problem on $[0,1]$ given by
\begin{equation}
\label{eq:mpsl}
\begin{aligned}
y_1''(x_1) + \{ \lambda_1 x_1 + \lambda_2(1-x_1) \} y_1(x_1) &= 0, \\
y_2''(x_2) + \{ \lambda_1 + \lambda_2 x_2 \} y_2(x_2) = 0,
\end{aligned}
\end{equation}
with $y_1(0) = y_1(1) = y_2(0) = y_2(1) = 0$. Similarly to how the eigenvalues of a univariate Sturm-Liouville problem can be numbered, we can obtain asymptotics for the eigenvalues from oscillation numbers $n_1,n_2$, as in~\cite[\S~8.6]{atkinson2010multieig}.
The system in~\cref{eq:mpsl} fits into the framework of~\cite[\S~8.6]{atkinson2010multieig}, with coefficient rows
\[
(p_{11}(x_1), p_{12}(x_1)) = (x_1, 1-x_1), \quad (p_{21}(x_2), p_{22}(x_2)) = (1, x_2).
\]
The associated phase--integral functions, as in~\cite[\S~8.6]{atkinson2010multieig}, are
\[
F_r(\lambda_1, \lambda_2) = \int_0^1 \sqrt{p_{r1}(x_r)\lambda_1 + p_{r2}(x_r)\lambda_2}\, dx_r, 
\quad r = 1,2.
\]
By~\cite[Thm. 8.6.2--8.6.3]{atkinson2010multieig}, the eigenvalues $(\lambda_1, \lambda_2)$ associated with oscillation numbers $(n_1,n_2)$ satisfy
\[
F_r(\lambda_1, \lambda_2) = \pi n_r + o(n_1 + n_2), 
\quad r = 1,2, \quad \text{as } n_1^2 + n_2^2 \to \infty.
\]
Elementary integration yields
\begin{align*}
F_1(\lambda_1, \lambda_2) 
&= \int_0^1 \sqrt{\lambda_1 x_1 + \lambda_2(1-x_1)}\, dx_1  =
\frac{2}{3}\,\frac{\lambda_1^{3/2} - \lambda_2^{3/2}}{\lambda_1 - \lambda_2}, \\
F_2(\lambda_1, \lambda_2) 
&= \int_0^1 \sqrt{\lambda_1 + \lambda_2 x_2}\, dx_2 =
\frac{2}{3\lambda_2}\left[(\lambda_1 + \lambda_2)^{3/2} - \lambda_1^{3/2}\right].
\end{align*}
Thus the leading-order asymptotic equations are
\begin{align}
\frac{2}{3}\,\frac{\lambda_1^{3/2} - \lambda_2^{3/2}}{\lambda_1 - \lambda_2} &\sim \pi n_1, \\
\frac{2}{3\lambda_2}\left[(\lambda_1 + \lambda_2)^{3/2} - \lambda_1^{3/2}\right] &\sim \pi n_2.
\end{align}
We can then solve this system of equations by Newton's method for any particular choice of oscillation numbers $n_1,n_2$.
To solve the Sturm-Liouville system and find the true eigenvalue pair, we discretize in a region of each asymptotic estimate using matrices of size $200 \times 200$ to ensure that we have at least two points per wavelength. Then we solve the resulting linear MEP using biBeyn. For comparison, a two-parameter eigenvalue problem with matrices of size $200 \times 200$ results in operator determinants of size $40,000 \times 40,000$, which caused our computer to run out of memory and crash. Therefore, biBeyn is the first and, to our knowledge, the only nonlocal method that can find solutions for such high oscillation numbers. We give the results for some oscillation numbers between $30$ and $100$ in~\cref{tab:mpsl} and plot one of the resulting eigenfunction pairs in~\cref{fig:mpsl}.

\begin{table}[h]
\centering
\begin{tabular}{cc|cc|cc}
$n_1$ & $n_2$ 
& $\lambda_{1,\mathrm{asy}}$ & $\lambda_{2,\mathrm{asy}}$
& $\lambda_{1,\mathrm{num}}$ & $\lambda_{2,\mathrm{num}}$ \\
\hline
30 & 40 & 13498.430479 & 4642.853415 & 13497.825582 & 4644.150768 \\
40 & 50 & 18028.248411 & 13606.167147 & 18028.396333 & 13606.074993 \\
50 & 60 & 22577.169734 & 26842.168609 & 22617.547999 & 26841.798654 \\
60 & 80 & 53993.721914 & 18571.413659 & 53993.115657 & 18572.712393 \\
80 & 100 & 72112.993645 & 54424.668586 & 72113.143417 & 54424.574671 \\
\end{tabular}
\caption{Asymptotic and numerical solutions $(\lambda_1,\lambda_2)$ to the multiparameter Sturm-Liouville problem in~\cref{eq:mpsl}.}
\label{tab:mpsl}
\end{table}

\begin{figure}
\begin{center}
\begin{overpic}[width=\textwidth]{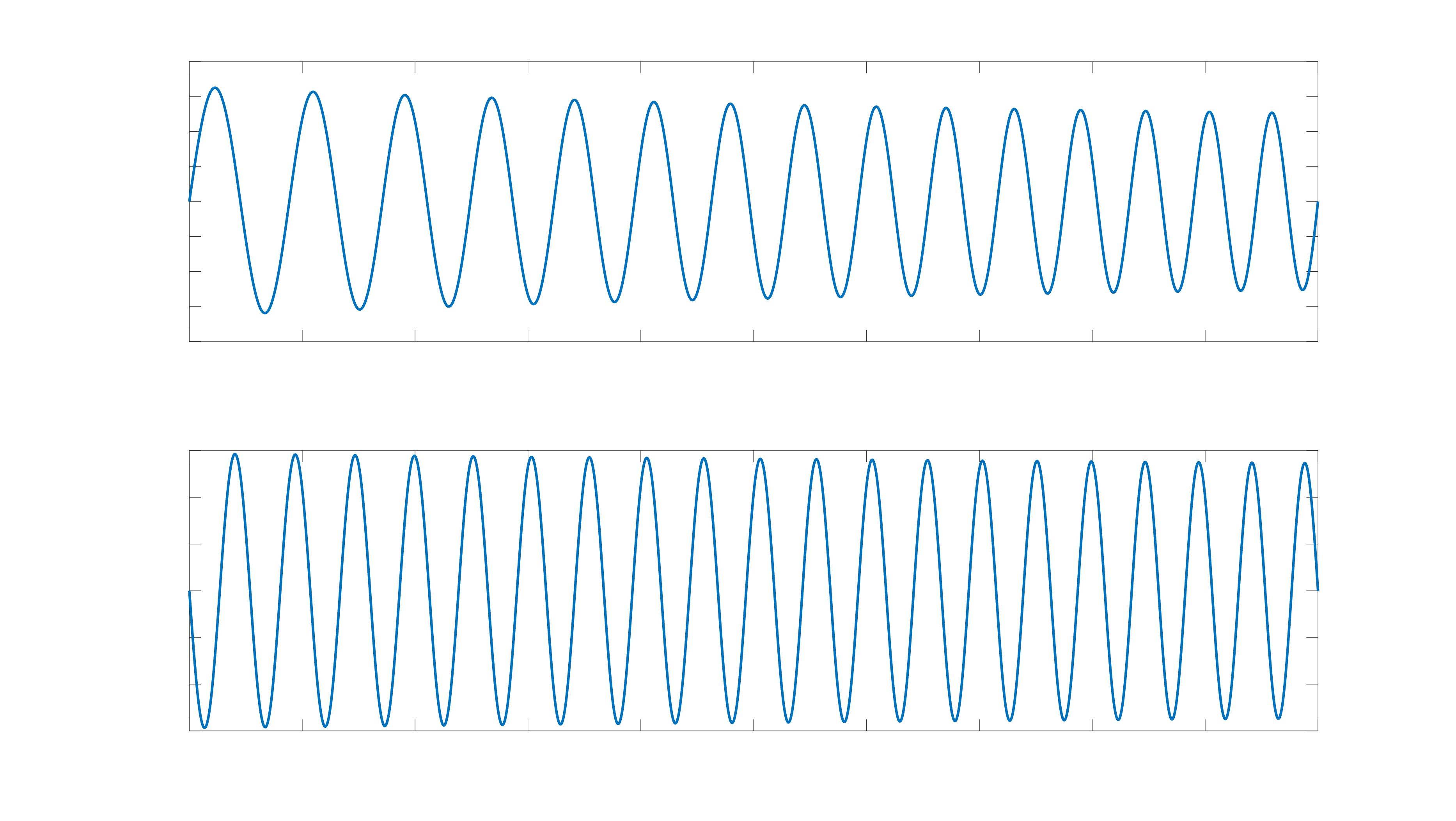}
	   \put(50,54){$y_1$}
           \put(50,27){$y_2$}
{\footnotesize
            \put(8,32.5){$-.02$}
            \put(7,34.9){$-.015$}
            \put(8,37.3){$-.01$}
            \put(7,39.7){$-.005$}
            \put(11,42.1){$0$}
            \put(8.5,44.5){$.005$}
            \put(8.5,46.9){$.010$}
            \put(8.5,49.3){$.015$}
            \put(9.6,51.7){$.02$}

            \put(13,31){$0$}
            \put(19.5,31){$0.1$}
            \put(27,31){$0.2$}
            \put(34.5,31){$0.3$}
            \put(42.5,31){$0.4$}
            \put(50.5,31){$0.5$}
            \put(58,31){$0.6$}
            \put(66,31){$0.7$}
            \put(73.7,31){$0.8$}
            \put(81.5,31){$0.9$}
            \put(88,31){$1.0$}
            
            \put(7,6){$-.015$}
            \put(8,9){$-.01$}
            \put(7,12){$-.005$}
            \put(11,15){$0$}
            \put(8.5,18.5){$.005$}
            \put(8.5,21.5){$.010$}
            \put(8.5,24.5){$.015$}
            
            \put(13,4.2){$0$}
            \put(19.5,4.2){$0.1$}
            \put(27,4.2){$0.2$}
            \put(34.5,4.2){$0.3$}
            \put(42.5,4.2){$0.4$}
            \put(50.5,4.2){$0.5$}
            \put(58,4.2){$0.6$}
            \put(66,4.2){$0.7$}
            \put(73.7,4.2){$0.8$}
            \put(81.5,4.2){$0.9$}
            \put(88,4.2){$1.0$}
}
\end{overpic}
\end{center}
\caption{Solutions $y_1,y_2$ to the multiparameter Sturm-Liouville problem in~\cref{eq:mpsl} for oscillation numbers $30$,  $40$. The method of~\cite{atkinson1972multieig} caused our computer to run out of memory.}
\label{fig:mpsl}
\end{figure}

\subsection{ARMA Model Parameter Selection}
\label{subsec:arma}

An ARMA model is a popular way to model time series with oscillating error terms. In~\cite{vermeersch2019arma}, the authors present a manner in which to compute all critical points in the parameter space of an ARMA model by solving a large polynomial multiparameter eigenvalue problem. The alternative is to compute critical points via a local optimization method, which runs the risk of missing the correct solution entirely. In~\cite{hochstenbach2024numerical}, the authors optimize this idea by taking advantage of the structure of these MEPs to compress them as much as possible, which significantly speeds up the resulting algorithm. However, even this approach has limitations. We focus here on the ARMA(1,1) model, but the main ideas generalize to any size. An ARMA(1,1) model for a time series $\{y_i\}$ takes the form
$$y_i = \alpha_1 y_{i-1} + \gamma_1 e_{i-1},$$
where $e_i$ is an error term. The objective is to find the parameters $\alpha_1,\gamma_1$ that minimize the norm of the error vector $\mathbf{e}$. 
Given a sequence $\mathbf{y} \in \mathbb{R}^n$, we can find the critical points by solving the two-parameter rectangular quadratic eigenvalue problem~\cite{vermeersch2019arma, hochstenbach2024numerical}
\begin{equation}
\left( A_{00} + \alpha A_{10} + \gamma A_{01} + \gamma^2 A_{02} \right) \mathbf{x} = 0
\label{eq:r2ep}
\end{equation}
with matrices $A_{ij}$ of size $(3n-1)\times(3n-2)$, where
\begin{equation}
\begin{aligned}
A_{00} = 
\begin{bmatrix}
\mathbf{y}_{(2)} & I & 0 & 0 \\
\mathbf{y}_{(1)} & 0 & I & 0 \\
0 & R & 0 & I \\
0 & \mathbf{y}_{(1)}^T & \mathbf{y}_{(2)}^T & 0 \\
0 & 0 & 0 & \mathbf{y}_{(2)}^T
\end{bmatrix}, \quad
&A_{10} =
\begin{bmatrix}
\mathbf{y}_{(1)} & 0 & 0 & 0 \\
0 & 0 & 0 & 0 \\
0 & 0 & 0 & 0 \\
0 & 0 & \mathbf{y}_{(1)}^T & 0 \\
0 & 0 & 0 & \mathbf{y}_{(1)}^T
\end{bmatrix}, \\
A_{01} =
\begin{bmatrix}
0 & R & 0 & 0 \\
0 & 0 & R & 0 \\
0 & 2I & 0 & R \\
0 & 0 & 0 & 0 \\
0 & 0 & 0 & 0
\end{bmatrix}, \quad
&A_{02} =
\begin{bmatrix}
0 & I & 0 & 0 \\
0 & 0 & I & 0 \\
0 & 0 & 0 & I \\
0 & 0 & 0 & 0 \\
0 & 0 & 0 & 0
\end{bmatrix},
\end{aligned}
\label{eq:arma11evp}
\end{equation}
where $\mathbf{y}_{(1)}$ is the first $n-1$ entries of $\mathbf{y}$, $\mathbf{y}_{(2)}$ is the last $n-1$ entries of $\mathbf{y}$, and $R$ is a tridiagonal matrix with stencil $[1,0,1]$. In~\cite{hochstenbach2024numerical}, the authors explain how to sketch this to a two-parameter quadratic eigenvalue problem with matrices of size $(3n-1) \times (3n-1)$, with two equations, which is suitable for biBeyn. For this specific problem, they also show that this can be compressed and then reduced through a variant of the operator determinants method to a generalized eigenvalue problem of size $(9n(n-1)+2) \times (9n(n-1)+2)$. However, it is also demonstrated experimentally that there are only $28n+7$ total finite eigenvalues. Furthermore, the only solutions of interest are real solutions. Our experimental evidence and that of~\cite{hochstenbach2024numerical} suggests that there are generally $3$ real solutions, regardless of the length of the sequence $\yy$. 

Therefore, the time complexity of the method of~\cite{hochstenbach2024numerical} is $\mathcal{O}(n^6)$ and the storage cost is $\mathcal{O}(n^4)$. For biBeyn, this drops to $\mathcal{O}(n^3)$ and $\mathcal{O}(n^2)$, with all the cost in the calculation of the contour integral. In particular, for this problem, our innovative choice of polyellipse contours centered on the real plane is crucial. For large problems (see~\cref{fig:ARMA2}), there are many complex solutions with small real parts. These solutions can cause biBeyn to fail to find all real solutions when using a polydisc contour. With a polyellipse contour we are able to push past previous numerical results to compute the critical points for extremely long sequences. First, we confirm that we can match~\cite{hochstenbach2024numerical} on a short sequence from~\cite[ex.~7]{hochstenbach2024numerical} given by
$$
\begin{matrix} y = &
[2.4130 & 1.0033 & 1.2378 & -0.72191& -0.81745 & -2.2918  &\\
& & 0.18213 & 0.073557 & 0.55248 & 2.0180 & 2.6593 & 1.1791].
\end{matrix}
$$
We show the critical points computed by each method in~\cref{fig:ARMA1}, which are identical.
\begin{figure}
\begin{center}
\begin{overpic}[width=.8\linewidth]{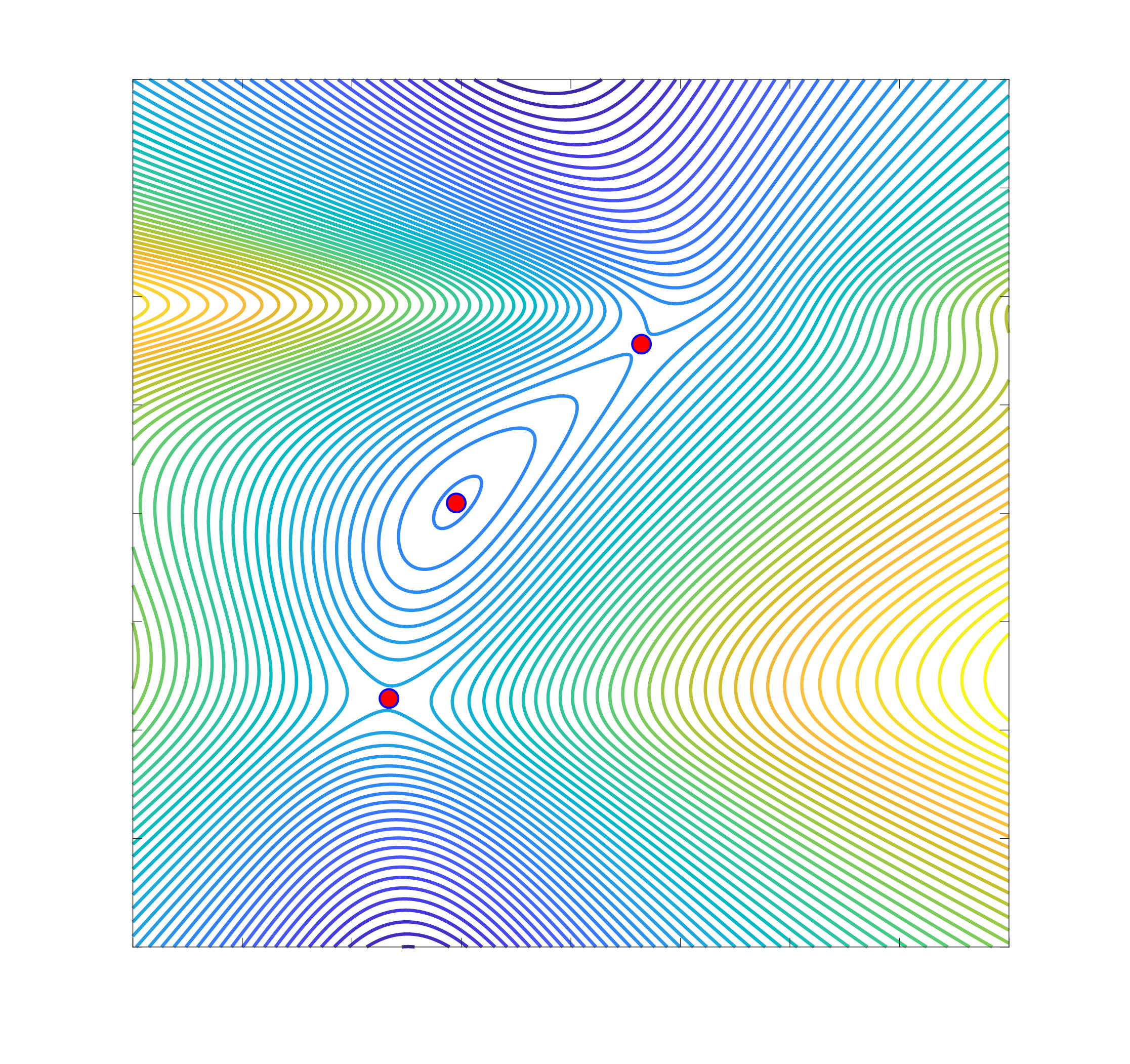} 
 	\put(50,3){$\alpha_1$}
            \put(1,46){\rotatebox{90}{$\gamma_1$}}
             {\footnotesize
            \put(5.5,10.5){$-2.0$}
            \put(5.5,19.5){$-1.5$}
            \put(5.5,28.8){$-1.0$}
            \put(5.5,38.4){$-0.5$}
            \put(10,48){$0$}
            \put(7.7,58){$0.5$}
            \put(7.7,67.5){$1.0$}
            \put(7.7,77){$1.5$}
            \put(7.7,86){$2.0$}
            \put(10,8){$-2.0$}
            \put(18.3,8){$-1.5$}
            \put(28,8){$-1.0$}
            \put(38,8){$-0.5$}
            \put(50,8){$0$}
            \put(58.5,8){$0.5$}
            \put(68,8){$1.0$}
            \put(78,8){$1.5$}
            \put(88,8){$2.0$}
           }
\end{overpic}
\end{center}
\caption{Critical points of an ARMA model found by~\cite{hochstenbach2024numerical} (blue circles) and by biBeyn~\cite{graf2026beyn} (red dots) on a sequence of length $12$. The methods return the same set of critical points.}
\label{fig:ARMA1}
\end{figure}

In practice, it is useful to be able to model time series that are far longer than $12$ time steps. We test on practical data from the crack propagation benchmark in~\cite{langlois2018fatigue}. We take a sequence of length $64$ from this data, and try to calculate the optimal model parameters. If using the method of~\cite{hochstenbach2024numerical}, we must solve a generalized eigenvalue problem of size $36,290 \times 36,290$, which caused our computer to run out of memory. However, biBeyn can find all the critical points, as shown in~\cref{fig:ARMA2}. 

\begin{figure}
\begin{center}
\begin{overpic}[width=.8\linewidth]{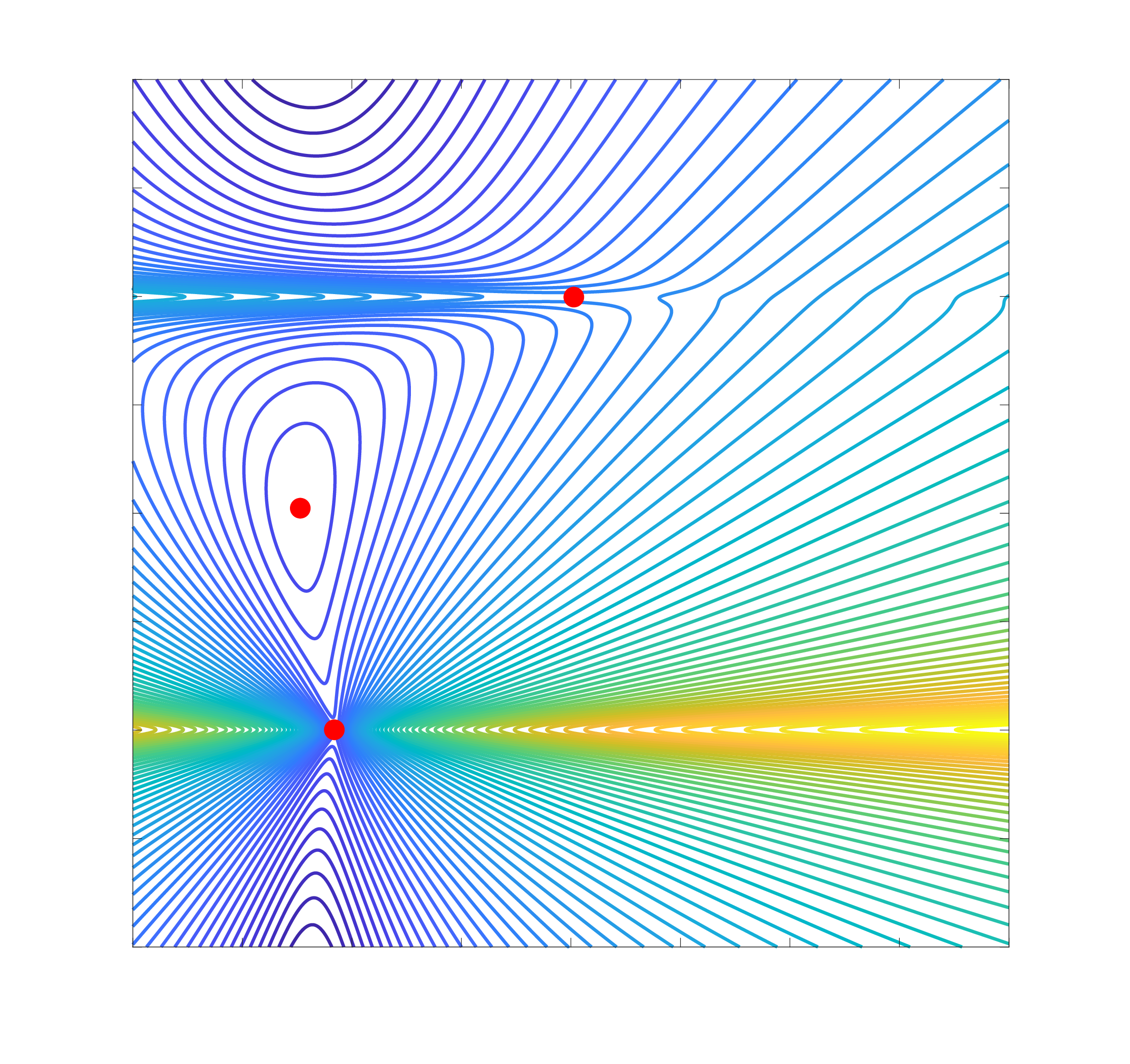} 
 	\put(50,3){$\alpha_1$}
            \put(1,46){\rotatebox{90}{$\gamma_1$}}
             {\footnotesize
            \put(5.5,10.5){$-2.0$}
            \put(5.5,19.5){$-1.5$}
            \put(5.5,28.8){$-1.0$}
            \put(5.5,38.4){$-0.5$}
            \put(10,48){$0$}
            \put(7.7,58){$0.5$}
            \put(7.7,67.5){$1.0$}
            \put(7.7,77){$1.5$}
            \put(7.7,86){$2.0$}
            \put(10,8){$-2.0$}
            \put(18.3,8){$-1.5$}
            \put(28,8){$-1.0$}
            \put(38,8){$-0.5$}
            \put(50,8){$0$}
            \put(58.5,8){$0.5$}
            \put(68,8){$1.0$}
            \put(78,8){$1.5$}
            \put(88,8){$2.0$}
           }
\end{overpic}
\end{center}
\caption{Critical points of an ARMA model found by biBeyn~\cite{graf2026beyn} on a sequence of length $64$. The method from~\cite{hochstenbach2024numerical} caused our machine to run out of memory.}
\label{fig:ARMA2}
\end{figure}

\subsection{A Toy Trivariate Test}
\label{subsec:triBeyn}

To test the validity of our general formulas in \cref{sec:theory}, we implemented a rudimentary trivariate version of our method as well. We started by asking a large language model to generalize our simplified formulas in \cref{sec:details}, and then to implement its generalized trivariate formula. While this approach was ultimately unsuccessful, the LLM created impressively plausible 3D formulas and code. We were inspired by the relative simplicity of the LLM's work to derive our own formulas and implement them in~\cite{graf2026beyn}. In the end, the formulas we use are derived by hand, but we view this as an intriguing probe of the limits of current LLM coding capabilities for new software.

A trivariate method requires the computation of the contour integral
$$
\int_{\partial D} g(\zz)(U_1 + U_2 + U_3) d\zz,
$$
where $D$ is the unit polydisc in $\C^3$, and
\begin{align*}
    U_1 &= R_{23} - R_{32} \\
    U_2 &= -(R_{13} - R_{31}) \\
    U_3 &= R_{12} - R_{21}, \\
\end{align*}
with 
$$
R_{rs} = \overline{\partial}\tilde{\rho}_{i,ijk,r}^H \overline{\partial}\tilde{\rho}_{j,jk,s}^H Q_k^{-1} -\overline{\partial}\tilde{\rho}_{j,ijk,r}^H \overline{\partial}\tilde{\rho}_{i,ik,s}^H Q_k^{-1},
$$
for any assignment of $\{i,j,k\} = \{1,2,3\}$ (all are equivalent), where
\begin{align*}
\overline{\partial}\tilde{\rho}_{i,I \cup \{i\},\ell}^H &= (-1)^{i-1} \left(\frac{\partial Q_i^H}{\partial z_{\ell}} - (Q_i^H)D_{(I)}^{-1}S_{(I),\ell} \right) D_{(I)}^{-1} d \overline{z}_{\ell}, \\
S_{(I),\ell} &= \sum_{j \notin I} Q_j \frac{\partial Q_j^H}{\partial z_{\ell}} + \sum_{j \in I} \frac{\partial Q_j^H}{\partial z_{\ell}} Q_j, \\
D_{(I)} &= \sum_{j\notin I} Q_j Q_j^H + \sum_{j \in I} Q_j^H Q_j.
\end{align*}

As in the bivariate case, $U_i$ vanishes except for on one particular piece of the boundary of the unit polydisc, so we can split the contour integral into three pieces for $U_1,U_2,U_3$, and integrate on each piece using the three dimensional extension of the quadrature formula in \cref{eq:quadrature}. After we compute the potential third coordinates of solutions, we plug them back in one at a time and use a version of biBeyn to compute the second coordinates, and then univariate Beyn's method for the first coordinates. We did not optimize the method as we did for biBeyn, so we do not expect it to perform exceptionally well in practice, but it is sufficient to test our theory.
To test our trivariate implementation, we solved the following generic toy example:
\begin{align*}
\left(\begin{bmatrix} -0.65 & -0.76 \\ 1.18 & -1.11 \end{bmatrix} \!+\! z_1 \begin{bmatrix} -1.69 & -1.12 \\ -1.15 & 0.36 \end{bmatrix} \!+\! z_2 \begin{bmatrix} -0.39 & -1.70 \\ 1.17 & 1.60 \end{bmatrix} \!+\! z_3 \begin{bmatrix} -3.02 & -0.49 \\ 1.75 & 0.33 \end{bmatrix}\right) \vv_1 &\!= 0, \\
\left( \begin{bmatrix} -1.97 & 1.18 \\ -1.27 & 2.03 \end{bmatrix} \, + \, z_1 \, \begin{bmatrix} -0.55 & 3.56 \\ 1.21 & 3.55 \end{bmatrix} \, + \, z_2 \,\begin{bmatrix} -3.73 & -0.83 \\ -2.10 & 2.80 \end{bmatrix} \, + \, z_3 \begin{bmatrix} -2.74 & 2.54 \\ -0.59 & 0.13 \end{bmatrix} \right) \vv_2 & = 0, \\
\left( \begin{bmatrix} 0.45 & 0.79 \\ -0.32 & 0.93 \end{bmatrix} \, + \, z_1 \, \begin{bmatrix} -0.98 & 1.18 \\ 3.59 & -1.27 \end{bmatrix} \!+\! z_2 \begin{bmatrix} 1.21 & -0.31 \\ -1.07 & 1.22 \end{bmatrix} \!+\! z_3 \begin{bmatrix} -2.09 & -2.34 \\ -0.69 & -1.37 \end{bmatrix} \right) \vv_3 &\!= 0.
\end{align*}
To keep the experiment reasonably efficient, we used only $20$ contour points in each real dimension of the boundary of the unit polydisc in $\C^3$, which has real dimension $5$. All the solutions to this problem that lie in the unit polydisc are real, and so they lie in $[-1,1]^3$. We plot the result in \cref{fig:tri}, keeping the solutions in the real unit box in $\mathbb{R}^3$. In \cref{tab:triBeyn}, we compare to multipareig~\cite{bor2025mpe}, which implements the classical operator determinants method efficiently.

Clearly a contour method is not a reasonable technique to solve a problem with only $8$ total solutions, and as expected our method is both much less accurate and much slower than multipareig. However, the comparison demonstrates that, while the number of contour points is insufficient to find the solutions accurately, we are able to locate all four solutions inside the region of interest. As our method works in principle, we believe that the techniques from \cref{sec:alg,sec:details} can be applied to optimize multiBeyn for more than two variables, which we expect will lead to the same increases in efficiency we have observed in our other experiments.

\begin{figure}
\begin{center}
\begin{overpic}[width=.8\textwidth]{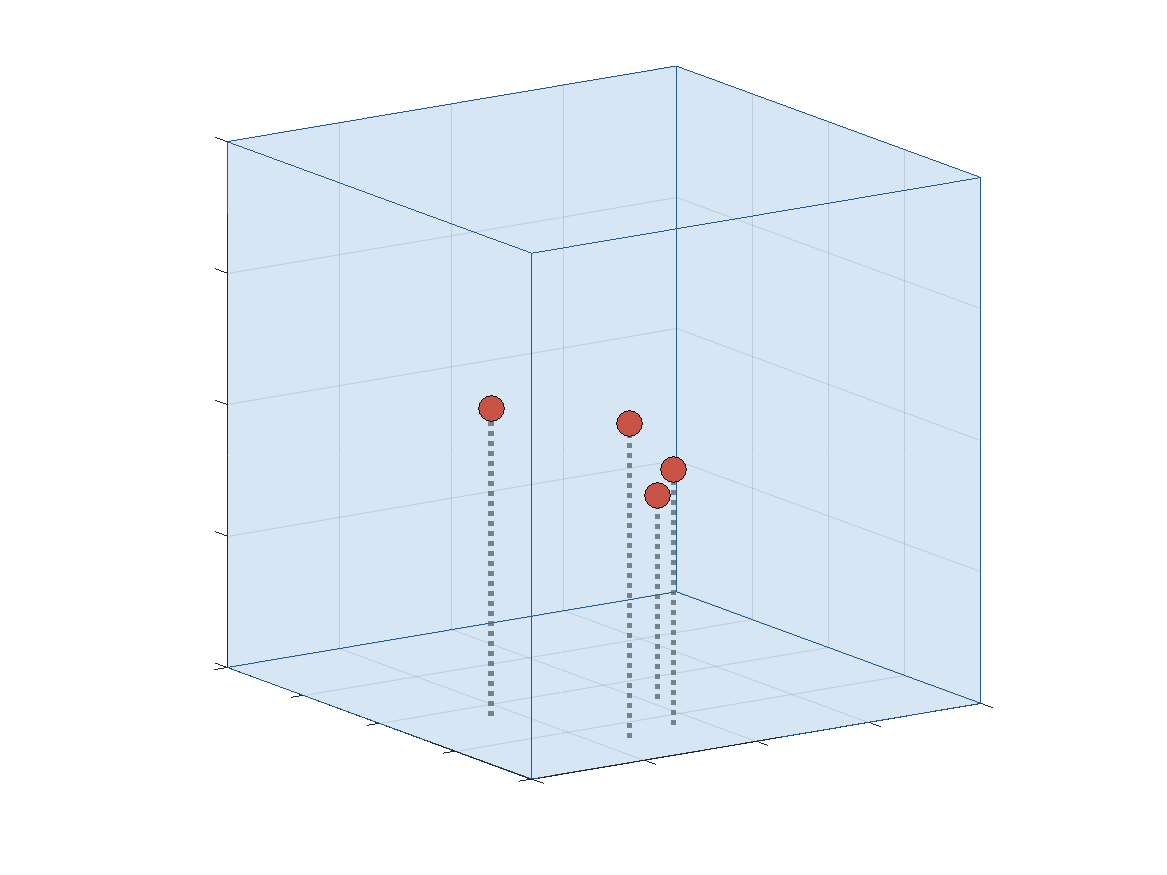}
\put(24,8){$z_1$}
\put(65,5){$z_2$}
            \put(12,40){\rotatebox{90}{$z_3$}}
             {\footnotesize
            \put(14.5,18.5){$-1$}
            \put(13.5,29){$-.5$}
            \put(16.5,40){$0$}
            \put(16,51.5){$.5$}
            \put(17,62){$1$}
            \put(19,15){$1$}
            \put(22,13){$.5$}
            \put(29,11){$0$}
            \put(32.5,8.5){$-.5$}
            \put(40,6.5){$-1$}
            \put(45,6){$-1$}
            \put(53,7){$-.5$}
            \put(65,8.5){$0$}
            \put(75,10){$.5$}
            \put(85,11.5){$1$}
            }
\end{overpic}
\end{center}
\caption{Solutions to a generic trivariate linear MEP in the real unit box $[-1,1]^3$. Our method triBeyn finds the four solutions in the unit box. The other $4$ solutions are outside the box.}
\label{fig:tri}
\end{figure}

\begin{table}[h!]
\centering
\renewcommand{\arraystretch}{1.5}
\begin{tabular}{cc}
\hline
\textbf{Multipareig} & \textbf{triBeyn} \\ \hline
\textcolor{ForestGreen}{(0.0526 , -0.2715 , -0.2292 )} & \textcolor{ForestGreen}{(0.0529 , -0.2714 , -0.2296 )} \\ 
\textcolor{ForestGreen}{(-0.5986 , -0.1411 , 0.1692 )} & \textcolor{ForestGreen}{(-0.5981 , -0.1407 , 0.1695 )} \\ 
\textcolor{ForestGreen}{(-0.1176 , -0.6300 , -0.0300 )} & \textcolor{ForestGreen}{(-0.1165 , -0.6297 , -0.0321 )} \\ 
\textcolor{ForestGreen}{(-0.3602 , -0.6971 , 0.1972 )} & \textcolor{ForestGreen}{(-0.3700 , -0.6881 , 0.1863 )} \\
(9.1205 , 13.5920 , -22.1367 ) &  (outside region) \\ 
(0.8238 +1.1327i, 0.0845 -1.2101i, 0.5259 +0.0672i) & (outside region)  \\ 
(0.8238 -1.1327i, 0.0845 +1.2101i, 0.5259 -0.0672i) & (outside region)  \\ 
(0.0083 , 2.5704 , 15.7214 ) & (outside region) \\ \hline
\end{tabular}
\hspace{.3cm}
\caption{Comparison of solutions of multipareig~\cite{bor2025mpe} and triBeyn~\cite{graf2026beyn}. The green solutions all lie in the unit polydisc, and in the box $[-1,1]^3$. Our method computes all the solutions in the region of interest successfully.}
\label{tab:triBeyn}
\end{table}

\begin{appendices}

\end{appendices}

\bibliography{references}

\end{document}